\newcommand{\average}{-\!\!\!\!\!\!\int}
\newcommand{\comment}[1]{}
\renewcommand{\div}{{\mathop{\rm\,div\,}\nolimits}}
\newcommand{\dist}{\mathop{\rm dist}\nolimits}
\providecommand{\qed}{\vrule height 6pt depth 0pt width 3 pt}
\newcommand{\reals}{{\bf R}}
\newcommand{\sphere}{{\bf S}}
\newcommand{\BibTeX}{{\rm B\kern-.05em{\sc i\kern-.025em b}\kern-.08em     
    T\kern-.1667em\lower.7ex\hbox{E}\kern-.125emX}}
\newcommand{\note}[1]{}
\renewcommand\marginpar[1]{}
\newcommand{\tent}[2]{{\cal N} ^ {#1}(#2)}
\newcommand{\ddb}[2]{{\tilde S}_{#2}({#1})}
\newcommand{\ssb}[2]{S_{#2}({#1})}
\newcommand{\hlmax}{{\cal M}}
\newcommand{\sobolev}[2]{  W ^ {{#2},{#1}}}
\newcommand{\ball}[2]{B_{#2}(#1)}
\newcommand{\locdom}[2]{{\Omega_{#2}(#1)}}
\newcommand{\sball}[2]{{\Delta_{#2}(#1)}}
\newcommand{\cyl}[2]{Z_{#2}(#1)}
\newcommand{\avg}[3]{ \bar #1 _{#2,#3}}
\newcommand{\nontan}[1]{#1^*}
\newenvironment{proof}[1][Proof]{\begin{trivlist}\item[\hskip \labelsep
{\it #1. }]}{\hfill \qed \goodbreak \end{trivlist}}   
\numberwithin{equation}{section} 
\newtheorem{theorem}[equation]{Theorem}
\newtheorem{lemma}[equation]{Lemma}
\renewcommand{\theequation}{\arabic{section}.\arabic{equation}}
\begin{document}
\title{The mixed problem for the Lam\'e system in two dimensions}

\author{
K.A.~Ott\footnote{Katharine Ott is partially supported by a grant from the
  U.S.~National Science Foundation, 
  DMS 1201104.} 
\\Department of Mathematics\\University of Kentucky\\Lexington,
KY 40506-0027, USA
\and
 R.M.~Brown\footnote{
Russell Brown   is  partially supported by a grant from the Simons
Foundation (\#195075).
} \\ Department of Mathematics\\ University of Kentucky \\
Lexington, KY 40506-0027, USA}
\date{}
\maketitle

\abstract{ 
We consider the mixed problem for $L$ the Lam\'e system of
elasticity in a bounded Lipschitz domain $ \Omega\subset\reals
^2$. We suppose that the boundary is written as the union of two
disjoint sets, $\partial\Omega =D\cup N$. We take traction data
from the space $L^p(N)$ and Dirichlet data from a Sobolev space $
W^{1,p}(D)$ and look for a solution $u$ of $Lu =0$ with the given
boundary conditions. We give a scale invariant
condition on $D$ and find an exponent $ p _0 >1$ so that for $1<p<p_0$,
we have a  unique solution of this boundary value  problem  with the
non-tangential maximal function of the gradient of the solution  in 
$L^ p(\partial\Omega)$. We also  establish the existence of
a unique solution when the data is taken from Hardy spaces and
Hardy-Sobolev spaces with $ p$ in $(p_1,1]$ for some $p_1 <1$. 
  }

\section{Introduction}\label{Introduction}
We consider the $L^p$-mixed problem for the 
Lam\'e system of elasticity in  a Lipschitz domain $ \Omega \subset
\reals ^2$. Thus we consider the operator $L= \mu \Delta + (\lambda + \mu)
\nabla \div$ acting on vector-valued functions. We assume that the  Lam\'e
parameters satisfy $ \mu >0$
and $\lambda>  -\mu$   so that the operator $L$ is elliptic. 
(See the discussion after  (\ref{KornElliptic}) for more details.) 
We assume that we have a decomposition of the boundary
into two sets $ \partial \Omega = D\cup N$ where $ D \subset \partial
\Omega$ is a non-empty, open, proper subset of $ \partial \Omega$ and
$N = \partial \Omega \setminus D$. We let $ \partial u / \partial
\rho$ denote a  traction operator at the boundary. Given data $ f_N$
on $N$ and $ f_D$ on $D$, we look for a solution $u$ of the boundary
value problem
\begin{equation} 
\label{MP}
\left \{ \begin{array}{ll} L u = 0, \qquad &\mbox{in }\Omega\\
u = f_D, \qquad  & \mbox{on } D\\
\frac { \partial u }{\partial \rho } = f _N , \qquad & \mbox{on } N\\
( \nabla u ) ^ * \in L^ p ( \partial \Omega) . \qquad & 
\end{array}
\right. 
\end{equation}
Here, $ \nontan{( \nabla u )} $ denotes the non-tangential maximal
function of $\nabla u$. 
The goal of this paper  is to give conditions on $ D$ and the data $f_D$
and $f_N$  that  will allow us to establish the existence and
uniqueness of the solution $u$ and show that the solution $u$ depends
continuously on the data. 

This investigation is a continuation of work that dates back at least
to Dahlberg \cite{BD:1977} who studied the Dirichlet problem for the
Laplacian in a Lipschitz domain by a careful investigation of harmonic
measure. Further developments for the Laplacian include 
Jerison and Kenig \cite{JK:1982c} who treated the Neumann problem and
the regularity problem in $L^2 ( \partial \Omega)$ and Dahlberg and
Kenig \cite{DK:1987} who studied the Neumann and regularity problems
in $L^ p ( \partial \Omega)$ for $p$ between 1 and 2. We point out
that it has become standard to refer to the Dirichlet problem with
data that has one derivative in some $L^p( \partial \Omega)$ space as
the regularity problem. In both the regularity and the Neumann
problem, the goal is to establish estimates on the non-tangential
maximal function of the gradient of the solution. Other relevant
developments include the work of Dahlberg, Kenig, and Verchota
\cite{DKV:1988} who studied the Dirichlet, Neumann, and regularity
problems for the Lam\'e system in $L^2( \partial \Omega)$ and work of
Dahlberg and Kenig \cite{DK:1990} who studied the regularity and
Neumann problems for the Lam\'e system in $L^p ( \partial \Omega)$ in
dimension 3.  Additional results were obtained for the Lam\'e system by
Mendez and  M.~Mitrea \cite{MR1781091} and  Mayboroda and M.~Mitrea
\cite{MR2181934} and again their work is 
limited to low dimensions.

Another strand of this story is the study of the mixed problem in
Lipschitz domains. The author Brown and collaborators including
Capogna, Lanzani, and Sykes \cite{RB:1994b,LCB:2008,JS:1999,SB:2001}
have established well-posedness of the mixed problem for the Laplacian
in restricted classes of Lipschitz domains.   
I.~Mitrea and M.~Mitrea
\cite{MM:2007} have extended these results to cover the Poisson
problem for the Laplacian in a wide range of function spaces. 
 These methods have been
extended to the Lam\'e system with I.~Mitrea \cite{MR2503013}, the
Stokes system with I.~Mitrea, M.~Mitrea, and Wright \cite{MR2563727},
and to the Hodge Laplacian by Gol'dshtein, I.~Mitrea, and M.~Mitrea
\cite{MR2839867}.  The aforementioned  works rely on the use of  the
Rellich identity with 
a vector field $\alpha$ chose that 
 $ \alpha \cdot \nu$ changes signs as we pass
from  $D$ to $N$. Here $ \nu$ is the outer unit normal to $ \partial
\Omega$.  It seems unlikely that this technique can be
applied in a general Lipschitz domain. In two recent papers, the
authors and Taylor \cite{OB:2009,TOB:2011} 
have developed techniques to investigate the mixed problem for the
Laplacian in a general Lipschitz domain and for quite general
decompositions of the boundary. Another interesting approach was taken
by Venouziou and Verchota \cite{MR2500502} who study the mixed problem
for the Laplacian in a class of polyhedral domains. The
Ph.D. dissertation of Venouziou \cite{MR2873458} includes a discussion
of mixed problems for the biharmonic equation in a class of polyhedral
domains and some examples where these mixed problems are not solvable.
Venouziou also gives a nice summary of early work on the mixed
problem.  

The work reported here extends the methods of Taylor, Ott, and Brown
to the Lam\'e system in two dimensions. A key ingredient of our
approach to the mixed problem (\ref{MP}) is estimates for the Green
function for the mixed problem.  Taylor, Ott, and Brown
\cite{JT:2011,TOB:2011} obtain the needed estimates for the Green
function for the Laplacian in all dimensions by the method of de
Giorgi \cite{EG:1957}.  (As we are considering constant coefficient
operators, solutions are well-behaved in the interior of the domain.
The interesting issue is regularity at the boundary.)  However, this
approach is not available for elliptic systems.  The Green function
for the mixed problem in two dimensions was studied recently by
Taylor, Kim, and Brown \cite{TKB:2012}.  We note that the work of
Dahlberg and Kenig \cite{DK:1990} provides estimates for 
the Green function for the Dirichlet problem and the Neumann problem
in three dimensions.  However, this approach depends on estimates for
the $L^p$-Dirichlet problem for $p$ near 2.  It is not clear how to
extend this approach to mixed boundary value problems. The
investigation of the mixed problem for elliptic systems in higher
dimensions remains an interesting open question.

With the estimates for the Green function in hand, we argue as in the
work of Dahlberg and Kenig \cite{DK:1987} and use the asymptotic
behavior of the Green function to study our boundary value problem for
$p\leq 1$.  This gives existence of solutions when the data comes from
atomic Hardy spaces.  Finally, we adapt an argument of Shen
\cite{ZS:2007} to establish results in $L^p$ for $p>1$ and
obtain the  following theorem which is the main result of this
paper. 
\begin{theorem} 
\label{Main}
Let $ \Omega \subset \reals ^ 2$ be a Lipschitz domain and let $ D
\subset\partial \Omega$ be a non-empty open proper subset satisfying
the corkscrew condition (\ref{Corkscrew}) and the conditions
(\ref{BIntegral}) and (\ref{Integral}).  Let $L$ be the Lam\'e
operator with coefficients satisfying  (\ref{Elliptic}). 

1) There exists $p_0 >1$ so that for $ 1 < p < p _0$, the $L^ p 
$-mixed problem is well-posed in the sense that if $
f_N \in L^ p (N)$, $ f_D \in \sobolev p 1 ( \partial \Omega)$, the
problem 
(\ref{MP}) has a unique solution which satisfies the estimate
$$
\| ( \nabla u ) ^ * \|_ { L^ p ( \partial \Omega) } 
\leq C ( \|f _N \| _ { L^ p ( N) }+ \| f_D \| _ { \sobolev p 1 (
  \partial \Omega) } ) .
$$
The boundary values of $u$ and $\nabla u$ exist as non-tangential
limits. 

2) There exists $p_1 < 1$ so that if $ p _ 1< p \leq 1$, the
$L^p$-mixed problem is well-posed. Thus if   $f_N \in H^ p (N)$ 
 and   $f_D$ lies in the Hardy-Sobolev space 
$H^ { 1,  p}( \partial \Omega  )$, then there exists a 
unique solution of the $L^p$-mixed problem which satisfies 
$$
\| u  \| _ { H ^ {1,p} ( \partial \Omega)} +
\| \frac {\partial u } { \partial \rho } \| _ { H ^ p ( \partial \Omega)}
+
\| \nontan { ( \nabla u )}\| _ { L^ p ( \partial \Omega)}
\leq C ( \| f _N \| _ { H^ p (N) } + \| f_D\| _{ H^ {1, p } ( \partial \Omega
) } ) .
$$
\end{theorem}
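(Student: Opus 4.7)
The plan is to establish part (2) first---for Hardy and Hardy-Sobolev data with $p_1 < p \leq 1$---and then bootstrap to $1 < p < p_0$ in part (1) via a real-variable argument of Shen type \cite{ZS:2007}. The workhorse throughout is the Green function $G(x,y)$ for the mixed problem in $\Omega$ together with the pointwise and gradient estimates available from Taylor, Kim, and Brown \cite{TKB:2012}.

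For part (2), the first step is to solve for an atom. Given an atom $a$ of $H^p(N)$ or of $H^{1,p}(\partial\Omega)$ supported in a boundary ball of radius $r$ centered at $Q$, one represents the solution by an appropriate Green-type formula so that, away from the support of $a$, $u(x)$ is the integral of $G(x,\cdot)$ (or its conormal derivative in the Dirichlet case) against $a$. Combining the cancellation of the atom with Hölder-type decay estimates on $G$ and $\nabla G$---precisely what determines the lower endpoint $p_1$---gives pointwise decay of $(\nabla u)^*$ off $\supp a$ that is integrable in $L^p(\partial\Omega)$. Near the support, the $L^2$ theory of the mixed problem together with Hölder's inequality on a surface ball of radius $\sim r$ controls the local contribution. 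Summing an atomic decomposition and invoking the atomic-maximal equivalence for Hardy spaces yields well-posedness in $H^p/H^{1,p}$ together with the stated trace estimates; this follows the Dahlberg-Kenig scheme \cite{DK:1987}.

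For part (1), the input is the $H^1$ theory just established. I would then adapt Shen's good-$\lambda$/reverse-Hölder argument to pass from atomic estimates to $L^p$-estimates for a small range of $p > 1$. The key step is a weak-reverse-Hölder inequality: for every surface ball $\Delta$ whose appropriate dilate is disjoint from the support of the data, there is an exponent $q > 1$ so that the $L^q$-average of $(\nabla u)^*$ on $\Delta$ is controlled by the $L^1$-average on a concentric larger ball. One derives this by localizing: split $u$ near $\Delta$ into a piece with the given data on a small region (handled by the Hardy result applied to a localized, atom-like version of the data) plus a correction solving a homogeneous mixed problem there, the latter treated by interior and boundary Caccioppoli-type estimates. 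A Gehring-type self-improvement then produces $p_0 > 1$, and a density argument yields existence for general $L^p$ data.

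Uniqueness follows from a Green's identity argument combined with the $L^2$ theory: a homogeneous solution with vanishing data and $(\nabla u)^* \in L^p$ must have $\nabla u = 0$, and the Dirichlet data then pins down the constant. The main obstacle I anticipate is the local reverse-Hölder step. Unlike in the pure Neumann or pure Dirichlet cases, one has no Rellich identity with a vector field $\alpha$ for which $\alpha \cdot \nu$ has a definite sign simultaneously on $D$ and $N$ in a general Lipschitz domain, so every comparison between the solution and its localizations must be routed through the Green-function estimates of \cite{TKB:2012}. The scale-invariant conditions on $D$---the corkscrew condition (\ref{Corkscrew}) and the integral conditions (\ref{BIntegral}) and (\ref{Integral})---should be what makes those estimates uniform across scales near the Dirichlet-Neumann junction, and verifying that uniformity is where the bulk of the technical work will lie.
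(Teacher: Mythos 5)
Your high-level plan — Green function estimates from \cite{TKB:2012}, atomic estimates for $p\le 1$, then a Shen-type bootstrap to $p>1$ — matches the paper's architecture, but there are three substantive gaps in the way you propose to execute it.

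First, you do not isolate the interior reverse H\"older inequality (Theorem~\ref{RH}) as a separate preliminary step, and without it the atomic estimates cannot be carried out. The chain in the paper runs: local $L^2$ estimates for $\nabla u$ on Whitney balls away from $\Lambda$ (from \cite{DKV:1988}), a weighted $L^2$ inequality with weight $\delta_r^{1-\rho}$ (Lemma~\ref{Whitney}), and then a pass from this weighted $L^2$ bound to an unweighted $L^p$ bound (Lemma~\ref{Boundary}). That last step requires H\"older's inequality with exponents $2/q$ and $q/(q-2)$ and crucially needs $\nabla u\in L^q_{\rm loc}$ for some $q>2$; it is exactly Theorem~\ref{RH} that supplies such a $q_0>2$, and the conditions (\ref{BIntegral}), (\ref{Integral}) enter precisely here to control $\int\delta_r^{t}$. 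Moreover, the exponent $p_0=q_0(1-\epsilon)/(2-\epsilon)$ that appears in the theorem is determined by this $q_0$. Your sketch absorbs all of this into ``the $L^2$ theory of the mixed problem together with H\"older's inequality,'' which is not enough: the Gehring self-improvement must be proved \emph{first}, for weak solutions, before any atomic estimate, not deduced afterwards.

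Second, the representation you propose for the atomic solution — writing $u(x)=\int G(x,y)a(y)\,d\sigma$ and then differentiating — cannot be pushed to a non-tangential maximal function estimate for $\nabla u$ with the tools at hand. The estimates of \cite{TKB:2012} are H\"older-continuity estimates on $G$ itself, not on $\nabla_x G$ or on mixed second derivatives of $G$; there is no direct way to bound $\nabla_x G(x,y)$ as $x$ approaches $\partial\Omega$ non-tangentially. The paper instead (i) uses the H\"older decay of $u$ via (\ref{GDecay}) plus Caccioppoli to get interior decay of $\nabla u$, (ii) converts this to $L^p$ boundary decay (Lemma~\ref{Decay}) using Lemma~\ref{Boundary}, and then (iii) establishes the non-tangential maximal estimate (\ref{HardyEstimate}) by a completely different route: the representation formula (\ref{GradRep}) in terms of the \emph{fundamental solution} $\Gamma$ of $L$ on $\reals^2$, the Coifman--McIntosh--Meyer theorem, the cancellation identities (\ref{MZ1})--(\ref{MZ2}), and the notion of a molecule. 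Your sketch omits this entire layer.

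Third, the uniqueness argument as stated is incorrect. You propose ``a homogeneous solution with vanishing data and $(\nabla u)^*\in L^p$ must have $\nabla u=0$,'' but when $p<2$ the Green identity you need for an energy estimate is not available: $(\nabla u)^*\in L^p(\partial\Omega)$ with $p$ small does not give $\nabla u\in L^2(\Omega)$, so you cannot integrate by parts to conclude the Dirichlet energy vanishes. The paper's Theorem~\ref{Uniq} instead proceeds by duality: pair $u$ against solutions $v$ with atomic traction data (using Green's identity on the approximating domains $\Omega_k$), invoke the fractional-integration Lemma~\ref{FI} (a genuinely two-dimensional argument) to show $\nontan u\in L^q(\partial\Omega)$ with $1/q=1/p-1$, conclude $u=0$ on $\partial\Omega$, and then invoke uniqueness for the $L^q$-Dirichlet problem for $q\ge 2$ from \cite{DKV:1988}. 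The constraint $q\ge 2$ is what forces $p_1\ge 2/3$. Your sketch of this step needs to be replaced entirely.
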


We point out that we 
will  require that our Dirichlet data have an extension
from $D$ 
to $ \partial \Omega$.  The sets $D$ that we consider may not be
extension domains for Sobolev spaces.  As our solutions (at least for
$p\geq 1$) will have boundary values in $ \sobolev p 1 ( \partial
\Omega)$, it is clear that we need to assume that $f_D$ is the
restriction to $D$ of a function which lies in $\sobolev p 1 (
\partial \Omega)$.

Section \ref{Definitions} gives the definitions and assumptions needed
in the rest of the paper. Section \ref{Reverse} gives a reverse
H\"older estimate for the gradient of a weak solution of the mixed
problem.  In section \ref{Atoms}, we give the fundamental estimate for
solutions with atomic data. Uniqueness is treated in section \ref{JustOne} and
the proof of the uniqueness assertion of Theorem \ref{Main} is given
in Theorem \ref{Uniq}.  The existence of solutions for the Hardy space
problem with $ p\leq 1$ is given in Theorem \ref{pEx} of section
\ref{Exist} and the existence for $p>1$ is indicated in Theorem
\ref{LpEx} which completes the proof of Theorem 1.2. 
  Two appendices summarize information that is probably
known, but not available in a convenient format. Appendix
\ref{Inequalities}  gives  several versions of the Sobolev
inequalities that are essential to this work and Appendix
\ref{RegularityProblem} provides a treatment of the regularity problem
with data from the Hardy-Sobolev space $H^ { 1, p }( \partial
\Omega)$.

\section{Definitions}
\label{Definitions}

\subsection{Domains}
\label{Domains}

We will assume that $ \Omega $ is a bounded Lipschitz domain. While
our final results hold only in two dimensions, many steps of the
argument can be carried out in any dimension.  When possible we will
give our arguments in $ \reals ^n$, $ n \geq 2$, in order to highlight 
that part of the argument that is truly two-dimensional and to lay out
the issues that restrict our argument to two dimensions. 
We assume $\Omega$ is a bounded open set in $\reals ^n$ and for $M>0$,
$ r>0$, and $ x \in \partial \Omega$, we define $Z_r(x) $  by 
$
Z_r(x) = \{ y : |x' - y ' | < r , |x_n - y _n | < ( 4M+2) r \}$. We
say that $ Z_r(x) $ is a  {\em coordinate cylinder for $ \partial
  \Omega$ }if there is a Lipschitz function $ \phi : \reals ^ { n -1 }
\rightarrow
\reals $ with Lipschitz constant $M$ so that 
\begin{eqnarray*}
\partial \Omega  \cap \cyl x r  & =  & \{ y : y _ n = \phi (y ') \}
\cap \cyl x r \\
\Omega \cap \cyl x r & = & \{ y : y _ n > \phi ( y ') \} \cap \cyl x r. 
\end{eqnarray*}
The coordinate system $ ( y ', y _n ) \in \reals ^ { n -1 } \times
\reals$ is assumed to be a rotation and translation of the standard
coordinate system on $ \reals ^ n$. 
\note{ An argument that 4M+2 gives a star-shaped domain. 

If we let the star be centered at $4M+1$, then the min slope of a line
segment joining the square of sidelength 1/2 to a point on the base is
r( 4M+ 1/2 - Mr) / ( 3r/2) = (3M + 1/3) . And since this is strictly
greater than M we are ok. (Though we may replace 4M+2 by a smaller
value. 
} 

We say that $ \Omega $ is a { \em  Lipschitz domain  }with constant $M$ if for
each $ x \in \partial \Omega$, we may find a coordinate cylinder for $
\partial \Omega$
centered at $x$, $Z_r(x)$. Since $ \partial \Omega$ is compact, we may find $
r_0$ and a finite collection of coordinate cylinders $ \{ \cyl   {x_i
}  { r     _0 }   \} _ {i = 1} ^ N$  which cover $\partial \Omega$  and so
  that for each $i$, $ \cyl {x_ i } { 101 r_0 }$ is also a coordinate
cylinder. 

Our estimates near the boundary will be defined using the
non-tangential maximal function. 
To define this object, we  fix $
\alpha > 0$ and for $ x\in \partial \Omega$, we 
define the  {\em non-tangential approach region with vertex at $x$
}by $\Gamma (x) = \{ y \in \Omega : |x-y | < (1+ \alpha ) d(y) \}$
where $ d(y ) = \dist (y, \partial \Omega)$ denotes the distance from
$y$ to the boundary.  For a function $v$ defined in $ \Omega$, we
define the {\em  non-tangential maximal function }of $v$ by 
$$
\nontan {v } ( x) = \sup _ {y \in \Gamma (x) } |v( y ) |, \qquad x \in
\partial \Omega.
$$
It is well-known that the $ L^ p$ norm with respect to surface measure
of the non-tangential maximal functions defined using different
choices of  $ \alpha$ are comparable.  Thus, we suppress the
dependence of $\Gamma$ and $ \nontan v $ on the parameter $ \alpha$. 

We recall a useful tool from the Ph.D. dissertation of G.
Verchota \cite[Theorem A.1]{GV:1982}. Given a Lipschitz domain  $
\Omega \subset \reals ^ n$, we may find a family of 
domains $ \{ \Omega _k \}_{ k=1} ^ \infty $ with $ \bar \Omega _ k \subset \Omega$
with the properties that a) each $ \Omega _k $ is a $C^
\infty$-domain, b) there are bi-Lipschitz transformations $ \Lambda _ k
:  \partial \Omega \rightarrow \partial \Omega _k $ such that $ \Lambda
_k (x)$ converges to $ x$. Using Verchota's construction we can see
that the $ \Lambda _k (x) $ converges to $x$ non-tangentially as $k
\rightarrow \infty$, 
 i.e. if the constant  $ \alpha $ used to define the approach regions
 $ \Gamma (x)$ is sufficiently
large, then $ \Lambda _k ( x) \in \Gamma (x)$ for all $k$. 

We define  a  star-shaped Lipschitz domain. These domains
will be useful because they have sufficiently regular geometry that we
can establish  estimates that are uniform over the  class of domains. 
Let $ x \in \reals ^ n$, $ r>0$, $M > 0$, and $ \phi : \sphere ^ { n
  -1} \rightarrow [ 1, 1+M]$. We say that $ \Omega$ is a {\em
  star-shaped Lipschitz domain } with center $x$, constant $M$, and
scale $r$ if  $\phi$ is Lipschitz with constant $M$ and 
$$
\Omega = x + \{ y : |y | < r \phi( y/ |y|)\}. 
$$

We introduce {\em boundary balls }(or intervals if $n =2$). For $ x
\in \partial \Omega$ and $ r \in (0, 100r_0)$, we let $ \sball x r =
\cyl x r \cap \partial \Omega$.  If $ x\in \Omega$ and $ r \in (0,
100r_0)$, we define {\em local domains } as follows. If $ d(x) =
\dist(x, \partial \Omega) > r$, then $ \locdom x r = \ball x r$, the
ball centered at $x$ with  radius $r$. If $ d(x) \leq r$, then 
 $x$ lies in one of the coordinate cylinders $\cyl {x_i} { r _0 } =Z_i
$. If $ x = (x', x_n)$ in the coordinate system for the cylinder
$Z_i$, then we put $ \hat x = ( x',\phi (x'))$ and define $ \locdom x
r = \cyl {\hat x } r $. 
We let $ x^ * = (x', \phi(x') + (4M+1) r e_n )$
and  observe that the domains $ \locdom x r$ are  star-shaped with
respect to each point in $ \ball {x^*}{ r/2}  $, thanks to our choice
of $M$. This 
allows us to prove scale-invariant Sobolev inequalities and Korn
inequalities in these domains.  If $x$ is on or near the boundary,
then $x$ may lie in several of the coordinate cylinders which cover
the boundary and thus we have several choices for $ \sball x r $ and $
\locdom x r$.  Our estimates will hold for any such choice with the
condition that if several of these objects appear in an estimate we
make a consistent choice of coordinate cylinder to define them.  Note
that in earlier works, we have sometimes used $ \ssb x r = \partial
\Omega \cap \ball x r$ and $ \ddb x r = \Omega \cap \ball x r$, $x \in
\Omega$, in
place of $ \sball xr $ and $ \locdom x r$. Each set of definitions has
its advantages and at one point in the argument below we will
find it convenient to use $ \ssb x r$ in place of $ \sball x r$.

We specify our decomposition of the boundary by giving our
assumptions on $ D$. We assume $ D\subset \partial 
\Omega$ is a non-empty proper  open subset of  $ \partial
\Omega$. We will assume that $D$ satisfies a corkscrew condition. To
describe this condition, let $\Lambda \subset \partial \Omega$ denote
the boundary of $D$ in $ \partial \Omega$ and $ \delta(x) = \dist(x, \Lambda
)$. We say that $D$ satisfies the corkscrew condition
if for all $x \in \Lambda$ and $ r $ in $(0, 100r_0)$, there exists $
x_r \in D$ so that $ |x_r -x | < r$ and $ \delta (x _r) > M ^ { -1}
r$. In Taylor, Ott, and Brown \cite[Section 2]{TOB:2011}, we show that
if  $D$ satisfies the  corkscrew condition, we have the following
consequence:
\begin{equation}\label{Corkscrew} 
\parbox{4 in} {If $ x \in D$ and $ r \in (0,100r_0)$, then there
  exists $ x_r \in
  D$ so that $ |x-x_r| < r $  and $ \sball {x_r} { M^ { -1} r } \subset D$
  .} 
\end{equation}

We note that the extreme cases $ D = \emptyset$ or $ \partial \Omega$
of the mixed problem (\ref{MP}) correspond to the traction and
regularity problems, respectively. As these have been studied
elsewhere, we exclude these cases from our discussion in this paper. 

We also require that $D$ satisfy the following conditions that involve
the integrability of $ \delta$. To state these conditions, for $  r
\in 
(0, 100 r _0)$, we let $ \delta _r (x) = \min ( \delta (x) , r)$ and
then we require that 
\begin{eqnarray}
\label{BIntegral} 
\int _ { \sball x r } \delta_r ^ t \, d\sigma \approx r ^ { n -1 + t }  ,
\qquad t  > -1 + \epsilon \\
\label{Integral} 
\int _ { \locdom x r } \delta_r ^ t \, dy  \approx r ^ { n  + t  }  ,
\qquad t > -2 + \epsilon . 
\end{eqnarray}
In our main results there will be a restriction on $ \epsilon$. The
condition on $ \epsilon$ depends on the $L^q$-index in the reverse
H\"older inequality in Theorem \ref{RH} and we refer
the reader to section \ref{Atoms} for more details. For the moment, we
observe that given $M$ we will find a 
positive value of $ \epsilon$ for which the conditions
(\ref{BIntegral}) and (\ref{Integral}) allow us to solve the mixed
problem.  While the conditions (\ref{BIntegral}) (\ref{Integral}) may
seem rather mysterious, they are closely related to the dimension of
the set $ \Lambda$. The work of Taylor, Ott, and Brown \cite[Lemmata
2.4, 2.5]{TOB:2011} shows that the conditions (\ref{BIntegral}) and
(\ref{Integral}) follow if we assume that the set $ \Lambda$ is of
dimension $ n- 2 + \epsilon$. In the setting treated in
\cite{OB:2009}, we have $ \epsilon =0$. As the conditions
(\ref{BIntegral}) and (\ref{Integral}) become more restrictive as $
\epsilon$ decreases, the results of this paper always apply in the
domains considered in \cite{OB:2009}.

{\em Example. } We close this sub-section by giving an example of a
domain that satisfies our conditions and illustrates that even in two
dimensions, the set $D$ can be fairly complicated.  Thus let $ \Omega
= \{ x : |x| < 1 \}$ be the disk and let $D$ be the set $ \cup _ { k
=1 } ^ \infty \{ ( \cos \theta , \sin \theta ) : \pi / { 2 ^ {2 k +
1} } < \theta < \pi / 2 ^ {2k} \}$.  It is not difficult to see
that this domain will satisfy (\ref{Corkscrew}) and (\ref{BIntegral})
and (\ref{Integral}) with $ \epsilon =0$.

\subsection{Function spaces} 
\label{Functions}

We will consider $L^p$ spaces with respect to Lebesgue measure on domains
in $ \Omega\subset  \reals^n$ and with respect to surface measure, $\sigma$, on 
the boundary of $\Omega$, $\partial \Omega$. These will be denoted $L^p ( \Omega)$ and
$L^p( \partial \Omega)$, respectively. 

We let $ \sobolev p 1 ( \Omega)$ denote the standard  Sobolev space of
functions having one  derivative in $L^p ( \Omega)$ with the norm
$$
\| u \| _ { \sobolev p 1 ( \Omega) } = \left ( \int _ \Omega |\nabla u
|^ p
   + r_0 ^ { -p } |u |^ p \, dy \right) ^ { 1/p }
$$
at least for $ 1\leq p < \infty$.  The factor of  $ r_0$ guarantees
that the two terms in the norm have the same homogeneity when
rescaling.  Our functions will generally take
values in $ \reals^n$, though we will not indicate the range in our
notation.

For $ D \subset \partial \Omega$, we define $ \sobolev 2 1 _D ( 
\Omega)$ as the closure in $ \sobolev 2 1 ( \Omega)$ of the functions
which are smooth in the closure of $ \Omega$ and which vanish in a
neighborhood of $D$. We let $\sobolev 2 { -1} _D ( \Omega)$ denote the
dual of $ \sobolev 2 1 _D ( \Omega)$. As noted all domains in this
paper will be Lipschitz and thus we have the trace operator which is a
continuous map from 
 $ \sobolev 2 1 ( \Omega)$  into $ L^ 2 ( \partial  \Omega)$ and
 extends the operation of restricting a smooth function to the
boundary. We let $ \sobolev 2 { 1/2} _D ( \partial \Omega)$ denote the
image of $ \sobolev  2 1 _ D(\Omega)$ under the trace map and then $
\sobolev   2 { -1/2} _D ( \partial \Omega)$ denotes the dual of $ \sobolev
2 { 1/2} _D ( \partial \Omega)$. 

We will also need to consider Sobolev spaces on the boundary. For a function
$u$ in $ C^ \infty ( \bar \Omega)$  we  define  a family of  tangential
derivatives 
$$
\frac  {\partial u }{ \partial \tau _{ij} }  = \nu _i \frac { \partial u }{ \partial x_j} - \nu _ j
\frac { \partial u  }{ \partial x _ i }, \qquad i,j=1,\dots,n, i \neq j 
$$
where $ \nu$ denotes the unit outer normal to the boundary. 
For a function $u$ which is smooth in $ \bar \Omega$, we
 can define the tangential gradient by 
$$
\nabla _t u^ \alpha = \nabla u^ \alpha -\nu \nabla u^ \alpha \cdot
\nu, \qquad \alpha = 1, \dots,n. 
$$
Note that we have 
$$
|\nabla _ t u|^2  \approx  \sum  _{1 \leq  i < j \leq n } 
|\frac { \partial u }{ \partial \tau _ { ij}}|^2
$$
and  for $1\leq p < \infty $,  the Sobolev space  $ \sobolev p 1 (
\partial \Omega)$  is the closure of the
boundary values of smooth functions in the norm 
$$
\| u \| _ { \sobolev  p 1 ( \partial \Omega ) }  = (  \|\nabla _t u
\|^ p _ { L^ p (\partial  \Omega) }  
+ r_ 0 ^ {-p}\| u \|^p _ { L^ p ( \partial \Omega)}) ^ { 1/p}. 
$$ 

Before defining Hardy spaces on the boundary, we recall the definition
of the spaces of H\"older continuous functions.  For $K $ a compact
subset of $ \bar \Omega$ and $ 0 < \alpha \leq 1$, we define the
H\"older space $ C ^ \alpha ( K ) $ to be the collection of functions
$f$ on $K$ for which the norm below is finite
$$
\| f \| _ { C ^ \alpha ( K) } = \sup _{ x \neq y } \frac {  |f(x)  -
f ( y) | } { |x-y | ^ \alpha }  + r _ 0 ^ { - \alpha } \sup _ K |f(x) | .
$$

If $ \Omega $ is a Lipschitz domain in dimension $n$ and $N \subset
\partial \Omega$ is a closed  set, we introduce the Hardy space $ H^p(N)$ for $ (n-1)/n< p \leq 1$. We
say that $ a $ is an atom for $ \partial \Omega$ if $ a $ is supported
in a boundary ball $ \sball x r$, $ a $ satisfies $ \| a\|_ { L^ \infty
  ( \partial \Omega)} \leq 1/\sigma ( \sball x r)$ and $ \int _ {
  \sball x r } a \, d\sigma =0$.  
We say that $a$ is  an atom for $N$ if $ a =\tilde a |_N$ for some 
 $\tilde a$, an atom for $\partial \Omega$. As we shall see these
atoms are building blocks for the spaces $H^ p (N)$ which arise
naturally in the study of $L^p$-mixed problem for $ p \leq 1$.  We
note that our atoms are normalized for the space $H^1$. When we
introduce $H^p$ spaces with $p < 1$, we will need to introduce powers of
 $\sigma( \sball x r) $  into the norm to compensate for the fact that the atoms
are not normalized in $H^p$. 

For $ p $ satisfying  $(n-1)/n < p \leq 1$, we say that $f$ lies in
the {\em Hardy space }$H^p(N)$ 
if there exists a sequence of atoms for $N$, $\{ a _ j \}_ { j =1 } ^
\infty  $ and
coefficients 
$\{ \lambda _j \} _ { j =1 } ^ \infty \subset (0, \infty )$ so that
$f = \sum _ { j = 1 } ^ \infty \lambda _j a _j $ and 
$ \sum _ { j =1 } ^ \infty \lambda _ j ^ p \sigma (\sball { x_j } {
  r_j})  ^ {  1-p  }
< \infty $ where the atom $ a _j$ is supported in a boundary
ball  $\sball { x_j}{r _j}$. We  define a quasi-norm for
$ p \leq 1$ by  
$$
\| f \| _ { H ^ p (N) } = \inf \left ( \sum _ j \lambda _ j ^ p \sigma (
\sball  { x _ j }{ r _  j} )  ^ { 1- p  } \right ) ^ { 1/p}
$$ 
where the infimum is taken over all representations of $f$ as a sum
of atoms. When $p =1$, the sum converges in $L^1 $ and we have $ H ^ 1
( N) \subset L^ 1 (N)$. However, for $p < 1 $ elements of $H^ p ( N)$
may not be functions. Rather they are defined as linear functionals on
a space of smooth functions. It is well-known that elements of $ H ^
p(\partial \Omega)$ give continuous linear functionals on the space $
C^ \alpha ( \partial \Omega)$ where $ \alpha = ( n-1 ) ( 1/ p - 1 ) $,
for $ 1>p > ( n-1) /n$. It is a straightforward consequence of our
definition that elements of $ H ^ p ( N)$ give continuous linear
functionals on $ C^ \alpha _D( \partial \Omega)$, the collection of
functions in $ C^ \alpha ( \partial \Omega) $ which vanish on $D$.  We
observe that it is an immediate consequence of the definition that an
element $f$ from $ H^p(N)$ has an extension to $ H^ p ( \partial
\Omega)$ with the bound $ \| f \| _ { H^ p (\partial \Omega)} \leq C
\| f \| _{H^ p (N)}$ for any $C>1$.  One consequence of our  main
theorem  is that there is a bounded linear extension operator from $
H^p (N)$ to $ H^ p ( \partial \Omega)$. However, it would be
interesting to develop a better understanding of these spaces.

We  define Hardy-Sobolev spaces $H^ { 1, p } ( D) $ as
follows.  We say that $A$ is a 1-atom for $ \partial \Omega$ if $ A$
is supported in a boundary ball $ \sball x r$ and $\| \nabla _t A \| _
{ L^ \infty ( \partial \Omega ) } \leq 1 / \sigma ( \sball x r)$.  We
  say
that $A$ is an atom for $D$, if $A$ is the restriction to $D$ of an
atom for $ \partial \Omega$. Finally, if $p$ satisfies $ (n-1)/n < p
\leq 1$,  we say $u$ is in $ H^ { 1, p
} ( D)$ if there is a sequence of atoms $\{ A_j \}$ for $D$ with each
$ A_j $ supported in a boundary ball $ \Delta _j = \sball { x_j} { r_j }$  and a sequence of
non-negative real numbers  $
\{ \lambda_j \} _ { j =1 }^ \infty$ with $ \sum _ j \lambda _j \sigma (
 \Delta _j ) ^ { 1 - p } < \infty $  and   $ u= \sum _ j
\lambda _j A_j $.  We define a quasi-norm on $ H ^ { 1, p } ( \partial
\Omega)$ as  $\| u \| _ { H ^ { 1, p } ( \partial
  \Omega) } = \inf(  \sum _ j  \lambda _ j ^ p \sigma ( \Delta _ j ) ^ {
    1-p } ) ^ { 1/ p }$ where the infinum is taken over all possible
  representations of $u$.   
It is well-known that one may define atoms 
using an $L^t$ space  $ 1< t< \infty$ instead of $L^ \infty$.
Thus, if we fix $t$ with $ 1 < t <
\infty$ and  replace the condition $ \| a\| _ { L^ \infty ( \sball x r
  ) } \leq \sigma (\sball x r )^ { -1}$ by $ \| a \| _ { L^ t ( \sball xr   )}
\leq \sigma ( \sball x r) ^ { -1/ t'}$, 
then the resulting Hardy space is independent of $t$.   A similar
result holds for the Hardy-Sobolev space. 
The work of Mitrea and
Wright  \cite[sections 2.2-2.3]{MW:2011}   provides an exposition
of these facts in the setting of the 
 boundary of a Lipschitz domain. 

\note{ Sum converges where?  For each $j$ $\| A_j \| _ 1 \leq r_j $,
  thus $\| \sum \lambda _j A_j \| _1 \leq \sum \lambda _j r_j \leq 
(\sum \lambda _j  \sigma ( \Delta _ j ) ^ { \frac 1 p -1})
\sup _j r _j \sigma ( \Delta _j ) ^ { \frac 1p -1 }  
\leq( \sum \lambda _j ^ p \sigma (\Delta _j ) ^ {\frac 1 p -1 } ) ) ^
    { 1/p} \sup r_ j ^ { n - ( n-1) /p} $ .
Since $ \partial \Omega$ is bounded, the supremum will be finite if
$ n - ( n-1) /p \geq 0$ or if $ p \geq ( n-1)/n$. 

 } 

\note{ On what sets should we define $H^p(N)$?

Boundary  ball $ \Delta$, ???  ball $ S$ } 

\note{ 
 I don't think we really should be restricting a distribution to
  a closed set. Perhaps, we should really view these critters as
  defined on the interior of $N$. ????  

Also, we have that $ C^ \alpha$ is the dual of $H^p$. 

We seem to need more on the existence of solutions to the $L^p$-regularity
problem. 

A useful inequality 
$ ( \sum x_i ) ^ { p } \leq \sum x_ i ^ p$  if $ 0< p \leq 1$ and $
x_i > 0 $. 

 }

\subsection{The weak formulation of the mixed problem}

Next we recall the Lam\'e operator  $L$ and give several well-known
estimates for solutions of the Lam\'e system, $Lu=0$. The operator $L$ will  be
written as
$$ 
(Lu)^ \alpha = \frac \partial {\partial x_i } a^ { ij } _{ \alpha \beta} \frac
\partial {\partial x_j } u ^ \beta,\qquad \alpha, \beta, i, j =1,\dots,n.
$$
Here and throughout this paper, we will use the summation convention
and thus we sum on the repeated indices $i$, $j$ and $\beta$  in the
above expression.  
We will consider the traction operator at the boundary 
$$
\frac { \partial u  } { \partial \rho } ^ \alpha = \nu _ i a ^ { i j }
_{ \alpha \beta } \frac { \partial u ^ \beta } { \partial x_ j } . 
$$
The coefficients $ a^ { ij } _ { \alpha \beta} $ we consider are 
 given by 
\begin{equation}
\label{CoeffDef}
a^ { ij } _ { \alpha \beta } = a^ { ij } _ { \alpha \beta} (s) 
= \mu \delta _ { ij} \delta _ { \alpha \beta} + s \delta _ { i \beta}
\delta _ { j \alpha }  + ( \lambda + \mu - s ) \delta _ { i\alpha }
\delta _ { j \beta}
\end{equation}
where $\delta _{ ij}$ is the Kronecker symbol, $ \lambda $ and $\mu $ are the Lam\'e parameters and $s$ is an
artificial parameter that has been introduced in several different
settings for mathematical reasons. Note that 
$$
\frac \partial { \partial x_i } ( \delta _ { i \beta} \delta _ { j
  \alpha } - \delta _ { i \alpha }\delta _ { j \beta } ) \frac
{\partial} { \partial x_ j  } = 0 . 
$$
Thus, the differential operator $L$ does not depend on the value of
$s$. However, the traction  operator $ \partial  u / \partial \rho$ 
does depend on $s$. 

We will require that our operator satisfy the ellipticity condition 
\begin{equation}
\label{KornElliptic}
a^ { ij } _ { \alpha \beta } \xi ^ i _ \alpha \xi ^ j _ \beta \geq 
M^ { -1} |\frac { \xi + \xi ^ t } 2 | ^ 2, \qquad \xi \in \reals ^ { n
  \times n}.
\end{equation}
This will hold  if $ s$, $ \lambda$, and $ \mu$ satisfy 
\begin{equation}
\label{Elliptic} 
s\in [0, \mu ], \qquad \mu > 0,  \qquad \mbox{and} \qquad  2\mu + n \lambda > 0.
\end{equation}

We note  several consequences of this ellipticity assumption (\ref{Elliptic})
 which
come under the name  of Korn inequalities. 
First we  have the global coercivity estimate  
\begin{equation} 
\label{Global} 
c\int _ \Omega |\nabla u |^ 2 \, dy  \leq  \int _ \Omega
a ^ { ij } _ { \alpha \beta } \frac { \partial u ^ \beta} { \partial
  y_ j } \frac  { \partial u ^ \alpha } { \partial y _ i } \, dy , 
\qquad u \in \sobolev 2 1 _ D ( \Omega) . 
\end{equation}
This  will hold if $D$ is a non-empty open subset of the boundary. 
The constant will depend on $ \Omega$ and $D$ and the constant will be
bounded away from zero for a family of operators which satisfy
(\ref{KornElliptic}) for a fixed $M$. 

\note{ As far as I can tell this constant depends on $ \Omega$. 
}
For   $u \in \sobolev 2 1 ( \Omega)$,   any local domain $ \locdom x
\rho$,      and any constant vector    $ c \in \reals ^ n $, we have
\begin{equation}
\label{LocalElliptic} 
\int _  { \locdom x \rho } |\nabla u | ^ 2 \, dy 
\leq C (M) \int _ {\locdom x \rho }  a^ { ij }_ { \alpha \beta } \frac {
    \partial u ^ \beta } { \partial y_ j } 
\frac { \partial u ^ \alpha } {\partial y _ i }  + \rho ^ { -2} |u-c
|^ 2 \, dy. 
\end{equation}
The constant depends only on $M$ and  the estimate does not depend on
the properties of $D$ and, in fact,  we do not  require that $ u$
vanish on $D$ for the estimate (\ref{LocalElliptic}) to hold. 
A proof of (\ref{LocalElliptic}) may be found in the monograph of 
Ole\u\i nik, Shamaev, and Yosifian \cite[Theorem 2.10]{MR1195131}. 
Throughout this paper, we will consider the Lam\'e operator (and its
generalizations)  with coefficients given by (\ref{CoeffDef}) and
satisfying (\ref{Elliptic}). As a consequence of these assumptions,
we obtain the estimates (\ref{KornElliptic}), (\ref{Global}), and
(\ref{LocalElliptic}).

We are ready to  give the definition of a weak solution
of the mixed problem. Given $ f  \in \sobolev 2 {-1}  _ D( \Omega)$ and $
f_N \in \sobolev 2 { -1/2} ( \partial \Omega)$, we consider the 
boundary value problem
\begin{equation} 
\label{WMP} 
\left \{ \begin{array}{ll}
Lu = f , \qquad & \mbox{in } \Omega\\
u = 0 , \qquad & \mbox{on } D \\
\frac { \partial u } { \partial \rho } = f _N ,  & \mbox {on } N . 
\end{array}
\right. 
\end{equation}
We say that  $u$ is a  {\em weak solution of (\ref{WMP}) }if $ u \in
\sobolev 2 1 _ D ( \Omega)$ and 
$$
\int _ { \Omega } a ^ { ij } _ { \alpha \beta } \frac { \partial u ^
  \beta } { \partial y_ j } \frac { \partial \phi ^ \alpha } { \partial
  y_ i } \, dy = \langle f_N , \phi \rangle _ { \partial \Omega} -
\langle f , \phi \rangle _ { \Omega} , \qquad \phi \in \sobolev 2 1 _D
( \Omega) . 
$$
We are using $ \langle \cdot, \cdot \rangle_ \Omega$ and $ \langle \cdot , \cdot \rangle_{ \partial \Omega} $ to denote
the pairings of duality on $ \sobolev  2 {-1} _D( \Omega) \times \sobolev 2 {1} _D ( \Omega)$ and
$\sobolev 2 {- 1/2} _D ( \partial \Omega ) \times \sobolev 2 { 1/2}_D( \partial \Omega)$, respectively. 
As we assume that the coefficients of $L$ satisfy
(\ref{KornElliptic}) and we assume $D$ is a nonempty open set of
$\partial \Omega$, we have the coercivity estimate (\ref{Global}) and
a Poincar\'e inequality (\ref{Poincare}). 
If $ u \in \sobolev p 1 _D ( \Omega)$, then we have that 
\begin{equation}
\label{Poincare} 
\| u \| _ { L^ p ( \Omega) } \leq C r_0 \| \nabla u \|_ { L^ p (
  \Omega)}.
\end{equation}
Thus the existence and uniqueness
of solutions
to (\ref{WMP}) is a straightforward consequence of   standard Hilbert
space theory. The solution of (\ref{WMP}) will satisfy 
$$
\| \nabla u \| _ { \sobolev 2 1 _D  ( \Omega) } 
\leq C (  \|f \| _ { \sobolev 2 { -1} ( \Omega) } + \| f _N \| _ {
  \sobolev 2 { -1/2} _D (\partial \Omega) } ) .
$$

Now that we have introduced the traction operator, $ \partial  
/ \partial \rho$, we describe the sense in which we take boundary
values of $ \partial u / \partial \rho $ when  $p < 1$. 
 As the boundary values   may not be a function,
it is not enough to ask for non-tangential limits a.e. Thus for $p <
1$, and $ f \in H ^ p (\partial \Omega)$, we say that $\partial u / \partial \rho
= f $ on $ \partial \Omega$,  if  we have 
\begin{equation}
\label{HPBV}
\lim _ { k \rightarrow \infty } \int _ { \partial \Omega_k }  \phi^
\beta ( \frac { \partial u } { \partial
  \rho } ) ^ \beta \, d\sigma 
=  \langle f , \phi \rangle _ { \partial \Omega
}, 
\qquad \phi \in C^ \alpha  ( \bar  \Omega)
\end{equation}
where $ \Omega_k \subset \Omega $ is a family of approximating domains
as constructed by Verchota.   If $ f_N$ lies in $H^ p (N)$, we say that 
$\partial u / \partial \rho =f_N $ on $N$ if we have (\ref{HPBV}) for $
\phi$ which lie in $C^ \alpha _D(\partial \Omega)$. 

\note{ The use of $ f$ and $f_N$ was intentional. $ f$ for the
  boundary values everywhere and $f_N$ for boundary values on a
  subset.

Let me know if you think this  is a really bad idea.
}


One technical point about our argument for the mixed problem is that
we do not have solutions in smooth domains as a starting point. A
common strategy in studying the Dirichlet and Neumann problems is to
approximate a Lipschitz domain by a sequence of smooth domains, prove a uniform
estimate in the approximating domains, and take a limit. It is an interesting question to find an
approximation of the mixed problems by problems which have smooth
solutions.

Finally, we make a note about the constants in our estimates. Many of
our estimates are of a local nature and hold on scales $r$ with $ 0<r<
r_0$ and with a  constant that depends only on
the parameter $M$  that appears in the definition of a Lipschitz
domain, the corkscrew condition, the constants in the estimates
(\ref{BIntegral}), and (\ref{Integral}), and any $L^p$ indices that
appear in the estimate.   We will say a constant depends on the global
character of $ \Omega$ if  the constant also depends on 
 the collection of coordinate
cylinders appearing in the definition of the Lipschitz domain and
the constant in the Korn inequality (\ref{Global}).  

\section{A reverse H\"older inequality for weak solutions}
\label{Reverse}

The first step of our argument is to show that if $u$ is 
a solution of (\ref{WMP})  with
nice data, then  $\nabla u $ in 
$L^ q ( \Omega) $ for some  $q>2$. We use the reverse H\"older
argument of Gehring  \cite{FG:1973} and Giaquinta and Modica
\cite{MR549962}. The argument is similar to  one  given in our joint
work with Taylor 
\cite{OB:2009,TOB:2011},  but a few changes are needed
 since the  operators involved may not be strongly elliptic. The next
 Theorem gives a precise formulation of our result. We note that the
 proof of this result only requires the condition (\ref{Corkscrew}) on
 $D$.    

\begin{theorem} \label{RH} 
Let  $ \Omega \subset \reals ^ n$ be a Lipschitz domain and
suppose that $D$ satisfies (\ref{Corkscrew}) and that $L$ is the
Lam\'e system with coefficients satisfying 
(\ref{Elliptic}). Suppose that   $u$ is a solution of (\ref{WMP}) with $ f=0$ and
$ f_N$ a function. 
We may find $ q_0 >2$ so that if 
 $ 2< q < q _ 0 $ and $ f_N \in L^ { q ( n-1) /n}( N)$, then for any
local domain $ \locdom x r $ we have 
\begin{equation} 
\label{RHI}
\begin{split}
&\left ( \average _ { \locdom x r } |\nabla u |^ q \, dy \right) ^ { 1
  / q } \\
&\qquad \qquad \leq C \left [ \average _ { \locdom x { 2r}} | \nabla u | \,
  dy + \left( \average _ {\partial \Omega \cap \partial \locdom x { 2r} } | f_N
  |^ { q ( n-1 ) /n }  \,d \sigma \right ) ^ { n / ( (n-1) q) } \right] .
\end{split}
\end{equation}
The value of $q_0$ depends only on $M$.   
The constant in this estimate depends only on $M$, the dimension $n$,
and $q$. 
\end{theorem}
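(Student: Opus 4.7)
The plan is to establish the usual Caccioppoli-plus-Sobolev-Poincaré reverse inequality at the $L^2$ level and then upgrade via Gehring's lemma. Fix a local domain $\locdom x r$ and a cutoff $\eta \in C_c^\infty(\locdom x{2r})$ with $\eta \equiv 1$ on $\locdom x r$ and $|\nabla \eta| \leq C/r$. The first step is to test the weak formulation with $\phi = \eta^2(u-c)$, where the constant $c \in \reals^n$ is chosen as the mean value of $u$ over $\locdom x{2r}$ when the enlarged domain does not touch $D$, and $c = 0$ when $\locdom x{2r}$ meets $D$ (the corkscrew condition (\ref{Corkscrew}) then guarantees a boundary ball $\sball{x_r}{M^{-1}r} \subset D$ on which $u = 0$, which is what makes the Poincaré inequality go through with $c=0$). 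Since the Lamé system need not be strongly elliptic, this is the point where we invoke the local Korn inequality (\ref{LocalElliptic}) rather than plain coercivity: it absorbs the symmetric-gradient form $a^{ij}_{\alpha\beta}\partial_j u^\beta \partial_i u^\alpha$ into $|\nabla u|^2$ modulo a term of the form $r^{-2}\int |u-c|^2$.

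The Caccioppoli estimate that emerges is
\begin{equation*}
\int_{\locdom x r} |\nabla u|^2 \, dy
\leq \frac{C}{r^2}\int_{\locdom x {2r}} |u-c|^2 \, dy
+ C\Bigl|\int_{\partial\Omega \cap \partial\locdom x{2r}} f_N \cdot \eta^2(u-c)\, d\sigma\Bigr|.
\end{equation*}
For the volume term, I would apply the Sobolev-Poincaré inequality from Appendix \ref{Inequalities} with exponent $p = 2n/(n+2)$ (so that $p^\ast = 2$) to obtain
$(\average_{\locdom x{2r}} |u-c|^2)^{1/2} \leq Cr(\average_{\locdom x{2r}}|\nabla u|^p)^{1/p}$;
star-shapedness of $\locdom x{2r}$ with respect to a ball of radius $r/2$ makes these constants scale-invariant. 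For the boundary term I would use Hölder with the dual pairing dictated by the trace embedding $W^{1,p}(\locdom x{2r}) \hookrightarrow L^{p(n-1)/(n-p)}(\partial\locdom x{2r})$. Combining and rescaling yields a reverse-Hölder-type inequality
\begin{equation*}
\Bigl(\average_{\locdom x r}|\nabla u|^2\, dy\Bigr)^{1/2}
\leq C\Bigl(\average_{\locdom x{2r}}|\nabla u|^p\, dy\Bigr)^{1/p}
+ C\Bigl(\average_{\partial\Omega\cap\partial\locdom x{2r}}|f_N|^{2(n-1)/n}\, d\sigma\Bigr)^{n/(2(n-1))}
\end{equation*}
for some $p < 2$.

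The final step is to apply Gehring's lemma (as in Giaquinta-Modica \cite{MR549962}) to this inequality: it is a standard self-improvement result that promotes such an $L^2$-reverse-Hölder inequality with a lower-exponent right-hand side into an $L^q$-reverse-Hölder inequality for some $q_0 > 2$ and all $q \in (2,q_0)$, with $\average|\nabla u|$ on the right. The inhomogeneous term involving $f_N$ is handled by the version of Gehring's lemma that allows a right-hand side perturbation; the exponent $q(n-1)/n$ appearing in the statement is exactly the natural one, because the ratio of scaling dimensions between $|\nabla u|^q$ (volume) and $|f_N|^{q(n-1)/n}$ (surface) matches. The $q_0$ produced depends only on the constant $C$ above and hence only on $M$ and $n$.

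The main obstacle, in my view, is not Gehring's lemma but the bookkeeping in step one: getting a Caccioppoli estimate that is genuinely uniform over all local domains and both boundary regimes (interior, near $N$ only, and near $D$), while controlling the traction boundary term in a way compatible with the Korn inequality. Because $\partial u/\partial\rho$ involves the symmetric gradient only through the bilinear form $a^{ij}_{\alpha\beta}$, and because (\ref{LocalElliptic}) requires subtracting an infinitesimal rotation, some care is needed to verify that the constant $c$ can be taken to be a genuine vector and not an affine function — it is here that the corkscrew condition and the geometry of $\locdom x r$ are essential.
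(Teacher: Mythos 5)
Your proposal is correct and follows essentially the same route as the paper: test the weak formulation with $\eta^2(u-c)$ (your case-split choice of $c$, zero when $\locdom x {2r}$ meets $D$ and a mean value otherwise, is a clean substitute for the paper's $\avg u x r$ device), invoke the local Korn inequality (\ref{LocalElliptic}) to compensate for the lack of strong ellipticity, control the volume and boundary terms via Sobolev--Poincar\'e and a boundary trace/Poincar\'e inequality, and close with Gehring--Giaquinta self-improvement. The one step you pass over quickly is the absorption needed after interpolating the intermediate $\bigl(\average|\nabla u|^{s}\bigr)^{1/s}$ ($1<s<2$) term down to $\average|\nabla u|$ by Young's inequality, which leaves a small multiple of $\bigl(\average_{\locdom x {2r}}|\nabla u|^2\bigr)^{1/2}$ over the larger local domain that cannot be absorbed directly into the left side over $\locdom x r$; the paper handles this by running the Caccioppoli argument for nested radii $\rho<r$ and invoking the iteration lemma of Giaquinta [p.~161] before the self-improvement lemma of [p.~122], while your fixed-radii phrasing would need the version of Gehring's lemma that tolerates a $\theta\average_{\locdom x {2r}}|\nabla u|^2$ term with $\theta<1$ (and your closing concern about whether (\ref{LocalElliptic}) permits subtracting a constant vector rather than a full infinitesimal rigid motion is unfounded: that estimate is the second Korn inequality, whose $\rho^{-2}\int|u-c|^2$ error term is present precisely so that a constant suffices).
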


In the above theorem and throughout this paper, we  use $\average _E f
 $ to denote the average of $f$ over a set $E$ and we adopt the
convention that the  average over the empty set is zero. 

\note{ $ \locdom x r $ is only defined for $x$ in $\Omega$, not $ \bar
  \Omega$. Fixed by removing the bar below. } 

We introduce the notation $ \avg u  x r $ which will be useful 
for the arguments of this section. For $ u$ a function
defined on $ \Omega$, $ x \in  \Omega$ and $ r \in ( 0, 100 r_0 )$,
we let $ \avg u x r = 0 $ if $ \delta(x) \leq r $ and $ \avg u x r =
\average _ \locdom x r  u (y) \, dy $ if $\delta (x) > r$. We observe that 
one useful feature of this definition is 
that if $ \eta $ is in $ C^ \infty ( \bar \Omega)$ 
and $ \eta =0 $ outside $ \locdom x r$,  and $ u \in \sobolev 21 _D (
\Omega)$, then  $ \eta ( u - 
\avg u x r ) $ lies in $ \sobolev 2 1 _D ( \Omega)$.

We  give several  formulations of the Sobolev inequalities that we
will use in the sequel. The proofs of the estimates  (\ref{SoPo2}) and
(\ref{BoPo}) are given in 
Appendix \ref{Inequalities}. These take advantage of the corkscrew
condition for $D$ (\ref{Corkscrew}) and the definition of $ \avg ux r
$. Let $p$ and $q$ satisfy  $ 1 \leq p <  n$ and 
 $ 1/q = 1/p - 1/n$. Then for $x \in \Omega$ and $ 0 < \rho < r <
100r_0$, we have  
\begin{equation} 
\label{SoPo2} 
\left ( \int _{ \locdom x \rho } | u - \avg u x \rho | ^ q \, dy
\right) ^ { 1/q} 
\leq \frac { C(M,p,n) r ^ { n-1} } { ( r- \rho ) ^ { n-1} } 
\left ( \int _ { \locdom x r } |\nabla u | ^ p \,dy \right) ^ { 1/p
}. 
\end{equation}
For $1 \leq p < n $, we define  $q$ by $ 1/q
= n/ ( p ( n-1) ) - 1 / ( n-1) $. We observe that for   all local
domains $ \locdom x
\rho$ and $ \locdom x r$ with $ 0 < \rho < r $, we have 
\begin{equation}
\label{BoPo} 
\left ( \int _ { \partial \locdom x \rho \cap \partial \Omega } 
|u - \avg u x \rho | ^ q \, d\sigma \right) ^ { 1/ q} 
\leq \frac { C (M, p, n) r ^ { n -1 } } { ( r- \rho ) ^ { n -1 } } 
\left ( \int _ {\locdom x r  } |\nabla u | ^ p \, dy  \right) ^ {
  1/p } .
\end{equation}

We turn to the proof of the reverse H\"older inequality in Theorem
\ref{RH}.  The proof will require the following auxiliary function 
$$
P_r f(x) = \sup _ { 0 < s < r }  \frac 1 { s ^ { n -1} } \int _ { \partial \Omega \cap \partial
  \locdom x s } |f | \, d\sigma. 
$$
When we apply $P_r$ to the traction  data $f_N$ we will assume that $
f_N$ is extended to be a function on $ \partial \Omega$ by setting $
f_N (x) =0 $  on $D$. As observed in \cite{OB:2009}, for $ 1 < p \leq
\infty$,
$q = pn/(n-1)$, we have 
\begin{equation}
\label{Poisson}
\| P_r f\| _ {L^ q ( \Omega)} \leq C \| f \| _ { L ^ p ( \partial
  \Omega)}.
\end{equation}
\note{ Compare to estimate of Dindos, Mitrea. Fractional integrals
  with measures.

Lots of changes to the proof below. Read carefully.}
\begin{proof}[Proof of Theorem \ref{RH}]  We let $u$ be a weak
  solution of (\ref{WMP}) with $ f=0$ and  $f_N$ a function.  We fix
  $ x \in \partial \Omega$, $ \rho $ and $ r$ with $ r/2 < \rho < r <
  r_0$ and set $ \epsilon =  (r- \rho) /2$. 
We claim that with $ p$ satisfying $ p > 2 (n-1) /n$, we have
\begin{multline}
\label{RHClaim}
\left ( \average _ { \locdom x { r/2} } |\nabla u |^ 2 \, dy \right )
^ {1/2}
\\
  \leq C \left [
\left ( \average _ { \locdom x r } |\nabla u | \, dy \right) 
+ \left ( \frac  1 { r ^ { n -1} } \int _ { \partial \locdom x r \cap
  \partial \Omega} | f _N | ^ p \, d \sigma \right ) ^ { 1/p } \right ]. 
\end{multline}
Observe that the last term is bounded by $ P_ {2r}  (|f_N|^ p ) ^ { 1/p
} (x) $. 
Theorem \ref{RH} will 
follow from (\ref{RHClaim}) using  the argument in Giaquinta
\cite[p.~122]{MG:1983},  and the estimate (\ref{Poisson}).

Thus, we turn to the proof of (\ref{RHClaim}). We let $ \eta$ be a
cutoff function which is one on $ \locdom x \rho$ and  zero outside $
\locdom x { \rho + \epsilon } $ and observe that $v= \eta ^ 2( u -
\avg  u x { \rho + \epsilon } ) \in \sobolev 2 1 _D ( \Omega)$. We let 
$ E = \int _ { \locdom x { \rho + \epsilon } } |u | |\nabla u | \eta
        |\nabla \eta| \, dy $. Using the product rule, the local
        ellipticity assumption (\ref{LocalElliptic}), and that $u$
        is a solution, we obtain 
\begin{equation}
\begin{split} 
\label{RHStep1}
\int _ { \locdom x { \rho + \epsilon } } \eta ^ 2 |\nabla u |^2 \, dy 
& =  \int _ { \locdom x { \rho + \epsilon }} |\nabla \eta ( u - \avg u
  x { \rho + \epsilon } ) | ^ 2 \, dy   +C E\\
& \leq \int _ \Omega a ^ { ij } _ { \alpha \beta }   \frac { \partial
    u ^ \beta } { \partial y _ j } \frac { \partial v ^ \alpha } {
    \partial y _ i } \, dy   +C E\\
& =  \int _ { \partial \Omega } f _N ^ \beta  v ^ \beta \, d\sigma
  + CE. 
\end{split}
\end{equation}
We apply H\"older's inequality, Young's inequality with $
\epsilon$'s,  and the boundary Poincar\'e inequality (\ref{BoPo}) and
obtain for any $\gamma \in \reals $ that 
\begin{equation}
\label{RHStep2}
\begin{split}
|\int _ { \partial \Omega } f _ N ^  \beta  v ^\beta  \,d \sigma |
\leq & r ^ \gamma \left ( \int _ { \partial \Omega \cap \partial \locdom
  x { \rho + \epsilon   } } |f_N |^ p \, d\sigma \right ) ^ { 2 / p } 
 \\ \qquad & + \frac {C r ^ { - \gamma } }{ ( 1- \rho/r)^ { 2n-2}} \left
 ( \int _ {   \locdom x { \rho + 2 \epsilon } } |\nabla u | ^ t \, dy
 \right ) ^ { 2/t} ,  
\end{split}
\end{equation}
where $ p > 1$ and $p$ and $t$ are related by 
$ 1/t = 1 /n + ( n-1)/(np')$.   We return to
(\ref{RHStep1}) and use 
(\ref{RHStep2}) to estimate the boundary term. To handle the error term
$E$, we use Cauchy's inequality  with $ \epsilon$'s to subtract  the
term  $ \nabla
u$ in $E$ from  the  left-hand side of (\ref{RHStep1}) and conclude that 
\begin{equation}
\label{RHStep3} 
\begin{split} 
\int _ { \locdom x { \rho + \epsilon } } \eta ^ 2 |\nabla u |^ 2 \, dy 
 \leq   
&
C ( 
r^ { -2} \int _ { \locdom x { \rho + \epsilon } } |u - \avg u x {
  \rho + \epsilon } | ^ 2 \, dy 
+ 
r ^ \gamma \left ( \int _ {
    \partial \Omega \cap \partial \locdom x { \rho + \epsilon } }
|f_N | ^ p \, d\sigma \right ) ^ { 2/p } \\
 & \qquad+ 
\frac { r ^ { - \gamma} } {  ( 1-\rho/r) ^ { 2n -2}  } \left( \int _ {
    \locdom x {\rho + 2 \epsilon }} |\nabla u | ^ t \, dy \right ) ^ { 2/t}  )
 .
\end{split}
\end{equation}
We use (\ref{SoPo2}) to estimate the first term in (\ref{RHStep3}),
choose $ \gamma$ so that $ n+\gamma = 2n /p$ and hence $ n -\gamma  = 
2(n-1) / p$, divide by $ r^ n$, and recall that $ 2 \epsilon = r -
\rho$ to obtain 
\begin{equation*} 
\begin{split}
&\average_ { \locdom x { \rho + \epsilon}} \eta^2 |\nabla u |^2 \,  dy \leq   C 
[  
\left ( \frac 1 { r ^ { n -1} } \int  _ {\partial  \locdom
  x r  \partial \Omega } |f_N |^ p \, d\sigma \right) ^ { 2/p }  
\\ &
\qquad \quad + \frac 1 {( 1 - (\rho/r))^ { 2n -2}} ( \left ( \average
_{ \locdom x {  r  }}  |\nabla u | ^ { 2n / ( n+2)} \, dy \right ) ^
{ (n+2) / n }  + \left ( \average _ { \locdom x { r } } |\nabla u |^
  t \, dy \right) ^ { 2/t} )  ]. 
\end{split}
\end{equation*}
We let $ s = \max ( t, 2n/( n+2))$ and observe that our condition that
$ p > 2 ( n -1) /n$ implies that $ t < 2$. We choose $ \theta$ so that
$ 1/s = ( 1- \theta ) / 1 + \theta /2$ and use H\"older's inequality
and Young's inequality to obtain that
\begin{eqnarray*} 
\left ( \average_ { \locdom x \rho } |\nabla u |^ 2 \, dy \right ) ^ { 1/2} 
& \leq &  C (  ( 1- \rho/r) ^ {-\frac  { 2n-2}{  1- \theta} }
  \average_ { \locdom x r}  |\nabla u |\, dy +
 P _ r ( |f|^ p ) ^ { 1/p }  (x) )    \\
&& \qquad    + \frac  1 2 \left ( \average _ {
    \locdom x r } |\nabla u | ^ 2 \, dy \right ) ^ { 1/2 } 
\end{eqnarray*}
Now a standard argument as in 
 Giaquinta \cite[p.~161]{MG:1983} gives 
(\ref{RHClaim}). 
\end{proof}

\section{Atomic Estimates}
\label{Atoms}
Our next step is to estimate the gradient of solutions at the boundary and
eventually to obtain estimates for the  non-tangential maximal function $
\nontan{  (
\nabla u ) } $. It is this part of the argument that requires the
integrability conditions on $ \delta$ given in (\ref{BIntegral}) and 
(\ref{Integral}). 

Our main result is  Theorem \ref{Hardy}.  A key step in the proof,
Lemma \ref{Decay}, uses estimates for the Green function from the work
of Taylor, Kim, and Brown \cite{TKB:2012}.  We are only able to prove
these estimates in two dimensions.

Throughout this section, we assume that $\Omega$ is a Lipschitz domain
in $ \reals ^2$, that $D \subset \partial \Omega$ is an open set
satisfying (\ref{Corkscrew}), 
and that $L$ is the Lam\'e operator with coefficients satisfying
(\ref{Elliptic}).
If $ q_0$ is as in Theorem \ref{RH}, we
choose $ \epsilon$  so that $ 1< q _0 ( 1-\epsilon )/ ( 2- \epsilon )$
and require that $ \Omega$ and $D$ satisfy  (\ref{BIntegral}) and (\ref{Integral})  for this $
\epsilon$. 

Given a boundary ball $ \sball x r$, we let $ \Sigma _ 0 = \ssb x r =
\ball x r \cap \partial \Omega $ and
for $ k \geq 1$, we let $ \Sigma _k = \ssb x { 2 ^ k  r} \setminus \ssb x {
  2 ^ { k -1 } r }$. We will use this notation in the following proof
and in a similar argument in Appendix \ref{RegularityProblem}.   

\begin{theorem} 
\label{Hardy} 
If $u$ is a weak solution of the mixed problem (\ref{WMP}) with  $f=0$
and 
traction   data $f_N =a $ 
an  atom supported in $\sball x r$, then   for  $ p$ with 
$   p < q_0 ( 1-\epsilon ) / ( 2- \epsilon )$, we have 
\begin{equation}
\label{HardyEstimate}
\| 
\nontan { ( \nabla u )}
\|_{ L^ p ( \Sigma _k )} \leq  C 2 ^ { -k \gamma } (2^k r) ^ { \frac 1
  p -1 } , \qquad k \geq 0  . 
\end{equation}
Here, $ \gamma $ is the exponent of H\"older continuity for the Green
function for our mixed problem (see \cite{TKB:2012} or the estimate
(\ref{GDecay}) below). 
In addition, $\nabla u$ has non-tangential limits a.e. on $ \partial
\Omega$.

Let $ p _1 =1/( 1+ \gamma)$ then for $ p _1 < p \leq 1$, we have
\begin{equation}
\label{Hardy2}
\| \nontan{(\nabla u )} \|_ { L^ p ( \partial \Omega)} \leq C \sigma (
\sball x r ) ^ { \frac 1 p -1 } .
\end{equation}

The constants in these estimates depend on $p$, $M$ and the global
properties of $  \Omega$. 
\end{theorem}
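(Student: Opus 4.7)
The plan is to split $\partial\Omega$ into a near-atom region $\Sigma_0\cup\Sigma_1$ and far-atom annuli $\Sigma_k$ for $k\ge 2$, and to estimate the non-tangential maximal function on each piece by very different methods.  First I would record a global energy estimate: since $a$ is an atom one has $\|a\|_{L^2(\partial\Omega)}\le \sigma(\sball x r)^{-1/2}$, and the mean-zero property combined with a boundary Poincar\'e estimate on $\sball x r$ makes $a$ bounded as a functional on $\sobolev{2}{1/2}_D(\partial\Omega)$ uniformly in $r$.  Coercivity (\ref{Global}) together with (\ref{Poincare}) then yields $\|\nabla u\|_{L^2(\Omega)}\le C$ uniformly in the atom, which is the input to all subsequent local arguments.

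For $k\in\{0,1\}$, I would apply the reverse H\"older inequality (Theorem~\ref{RH}) on local domains $\locdom{y}{cr}$ with $y$ in or near $\sball x{4r}$. The right-hand side involves the $L^{q/2}$-average of $a$, which for an atom is $\lesssim r^{2/q-1}$, plus the $L^1$-average of $\nabla u$ controlled by the global $L^2$-bound.  Since $L$ has constant coefficients, interior Moser bounds convert the resulting $L^q$-integrability of $\nabla u$ into a pointwise estimate $\nontan{(\nabla u)}(y)\le C\bigl(\hlmax |\nabla u|^q\bigr)^{1/q}(y)$ at nearby boundary points.  A simple H\"older inequality with $p<q$, together with $|\Sigma_k|\sim 2^k r$, then yields (\ref{HardyEstimate}) for $k=0,1$, with the $2^{-k\gamma}$ factor absorbed into the constant.

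For $k\ge 2$, the approach cone $\Gamma(y)$ with $y\in\Sigma_k$ stays at distance $\gtrsim 2^k r$ from the support of $a$, so $u$ is a smooth solution of $Lu=0$ on a neighbourhood of $\Gamma(y)$. Representing $u(z)=\int_{\sball x r} G(z,w)a(w)\,d\sigma(w)$ and using $\int a\,d\sigma=0$ to subtract off $G(z,x)$, the H\"older continuity estimate for the Green function established by Taylor, Kim, and Brown \cite{TKB:2012} gives a pointwise bound of the form
$$|\nabla_z u(z)|\le C\bigl(r/(2^k r)\bigr)^{\gamma}\cdot(2^k r)^{-1}=C\,2^{-k\gamma}(2^k r)^{-1}$$
for every $z$ in the portion of $\Gamma(y)$ at scale $2^kr$. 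This is exactly the content I expect to be isolated as Lemma~\ref{Decay}. Raising to the $p$th power and integrating over $\Sigma_k$ (of length $\sim 2^k r$) then yields $\|\nontan{(\nabla u)}\|_{L^p(\Sigma_k)}^p\le C\,2^{-kp\gamma}(2^k r)^{1-p}$, which is (\ref{HardyEstimate}).

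The a.e.\ existence of non-tangential limits of $\nabla u$ follows from (\ref{HardyEstimate}) by a standard density argument, approximating $a$ by smooth data for which classical regularity provides boundary limits. For the second part, summing the $p$th power of (\ref{HardyEstimate}) over $k\ge 0$ produces a geometric series $\sum_k 2^{k(1-p-p\gamma)}$, which converges precisely when $p>1/(1+\gamma)=p_1$, giving the bound $C\sigma(\sball x r)^{1-p}$ as claimed. The main obstacle I anticipate is the far-field step: converting the H\"older continuity of $G$ into a \emph{sharp} pointwise bound on $\nabla u$ over each annulus $\Sigma_k$ requires the delicate Green function estimates of \cite{TKB:2012}, and it is precisely here that the restriction to two dimensions is imposed, since analogous Green function estimates for elliptic \emph{systems} with mixed data are not available in higher dimensions.
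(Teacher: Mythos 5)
Your plan correctly isolates the near/far splitting, the role of Lemma~\ref{Decay}, and the use of the Taylor--Kim--Brown Green function estimates to get the $2^{-k\gamma}$ decay; it also correctly identifies why the argument is two-dimensional. But there is a genuine gap between the boundary decay estimate for $\nabla u$ and the stated estimate for the \emph{non-tangential maximal function} of $\nabla u$, and your proposal does not supply the step that bridges it.

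In the far-field part ($k\ge 2$) you propose to obtain a pointwise bound $|\nabla u(z)|\le C\,2^{-k\gamma}(2^kr)^{-1}$ for all $z$ in the non-tangential cone over $\Sigma_k$ by combining the H\"older decay of the Green function with interior estimates. This works only for $z$ whose distance to the boundary is comparable to $2^kr$; interior estimates yield $|\nabla u(z)|\lesssim d(z)^{-1}\sup_{B(z,d(z)/2)}|u|\lesssim 2^{-k\gamma}/d(z)$, which blows up as $z\to\partial\Omega$ inside the cone. The same difficulty appears in the near-field step: the ``interior Moser bound'' you invoke does not control the part of the approach region that abuts the boundary, and in particular gives no control near the interface $\Lambda$ where no pointwise regularity of $\nabla u$ is available. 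So your argument does establish the content of Lemma~\ref{Decay} (the boundary trace estimate $\|\nabla u\|_{L^p(\Sigma_k)}\le C2^{-k\gamma}(2^kr)^{1/p-1}$) but stops short of (\ref{HardyEstimate}).

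What the paper does to close this gap is qualitatively different. Local boundary control is first obtained not by Moser but through the Rellich-type estimate of Lemma~\ref{LocalSolve} applied on a Whitney decomposition of $\partial\Omega\setminus\Lambda$ (Lemma~\ref{Whitney}), which produces an estimate for a \emph{truncated} non-tangential maximal function with the weight $\delta_r^{1-\rho}$; H\"older's inequality together with the hypotheses (\ref{BIntegral}) and (\ref{Integral}) converts this to the unweighted $L^p$ bound of Lemma~\ref{Boundary}. The boundary decay of $\nabla u$ (Lemma~\ref{Decay}) then follows by combining Lemma~\ref{Boundary} with Caccioppoli (\ref{Cacc}) and the Green function decay (\ref{GDecay}), much as you suggest. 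The crucial further step --- the one your proposal is missing --- is the representation formula (\ref{GradRep}): after summing over a partition of unity one expresses $\nabla u$ in $\Omega$ as a Cauchy-type singular integral of the boundary values of $\nabla u$. The cancellation relations (\ref{MZ1})--(\ref{MZ2}) and the size estimates of Lemma~\ref{Decay} exhibit the boundary density as a molecule, and the Coifman--McIntosh--Meyer theorem for singular integrals on Lipschitz curves then yields the non-tangential maximal function bound (\ref{HardyEstimate}). Without a device of this kind to pass from a boundary-trace estimate to a cone estimate, the argument cannot be completed.
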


We recall the following results of Dahlberg, Kenig, and Verchota
\cite{DKV:1988}  which are a consequence of the
existence theory for the $L^2$-traction and regularity problems. 
These results do not depend on $D$ and hold in all dimensions.
To state these estimates, we will use 
 the {\em  truncated non-tangential maximal function }defined  by 
$$
\nontan { v_ r} (x)  = \sup _ { y \in \Gamma (x) \cap { \ball x r } } |\nabla 
v ( y ) | , \qquad x \in \partial \Omega .
$$

\begin{lemma}[\cite{DKV:1988}]
\label{LocalSolve} 
Let $ x \in \partial \Omega$ and $ r  \in (0, 25 r_0)$. Suppose that  $u$ is a
solution of $ Lu = 0$ in $ \locdom x { 4r} $ and  $ u $ lies in the
Sobolev space $\sobolev   2 1 ( \locdom x { 4r } )$. 

If $ \partial u/ \partial \rho    \in L^ 2 ( \sball x { 4r} )$,
then
$$
\int _ { \sball x r }{ \nontan { ( \nabla u ) _ r }}  ^ 2 \, d\sigma 
\leq C \left ( \int  _ \sball x {  4  r } \left | \frac { \partial u } {
  \partial \rho } \right| ^ 2 \, d\sigma + \frac  1  r \int _ {
  \locdom x { 4 r} } |\nabla u |^ 2\, dy \right ) .
$$

If $ \nabla _t u $ lies  in  $L ^ 2 ( \sball x { 4r} ) $, then 
$$
\int _ { \sball x r }{ \nontan { ( \nabla u ) _ r }}  ^ 2 \, d\sigma 
\leq C \left ( \int  _ \sball x {  4  r }  |  \nabla _ t u  | ^ 2 \, d\sigma +
\frac  1  r \int _ {   \locdom x { 4 r} } |\nabla u |^ 2\, dy \right )
.
$$

In each case, $ \nabla u$ has non-tangential limits a.e. on $\sball x r$.
The constants in these estimates depend only on the dimension and
$M$. 
\end{lemma}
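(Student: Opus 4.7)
The plan is to treat this as a local version of the fundamental $L^2$-solvability results for the traction and regularity problems for the Lam\'e system on Lipschitz domains due to Dahlberg, Kenig, and Verchota \cite{DKV:1988}. I would reduce the localized statement to the globally star-shaped case, where the DKV machinery applies directly, and handle the interior portion of $\partial\locdom x {4r}$ using standard interior estimates for the constant-coefficient system $Lu=0$.

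The first step is to exploit the observation from Section \ref{Domains} that $\locdom x {4r}$ is itself a star-shaped Lipschitz domain, centered at a point $x^\ast$ at height comparable to $r$ above the defining graph, with Lipschitz constant controlled by $M$. On the graph portion of its boundary (which includes $\sball x {4r}$), the radial vector field $h(y)=y-x^\ast$ satisfies $h\cdot\nu(y)\geq c(M)\, r>0$, which is the geometric input the Rellich identity needs.

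The heart of the argument is the Rellich identity for the Lam\'e system. Integrating the divergence of $h^k a^{ij}_{\alpha\beta}\,\partial_j u^\beta \,\partial_i u^\alpha$ over $\locdom x {4r}$ and using $Lu=0$ produces an identity that essentially equates $\int_{\partial \locdom x {4r}} (h\cdot\nu)|\nabla u|^2 \, d\sigma$ with a combination of
$$ \int_{\partial\locdom x {4r}} \left| \frac{\partial u}{\partial \rho} \right| |\nabla u|\, d\sigma \quad \text{and}\quad r^{-1}\int_{\locdom x {4r}} |\nabla u|^2 \, dy. $$
On the graph portion this bounds $\|\nabla u\|_{L^2(\sball x {4r})}^2$ from below because $h\cdot\nu\gtrsim r$. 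On the complementary, interior portion of $\partial\locdom x {4r}$ (which sits at distance $\gtrsim r$ from $\partial\Omega$), standard interior estimates for $Lu=0$ bound all pointwise values of $|\nabla u|$ and of $|\partial u/\partial \rho|$ by the average of $|\nabla u|^2$ over a slightly larger interior ball, and hence by $r^{-1}\int_{\locdom x {4r}} |\nabla u|^2\, dy$. Cauchy--Schwarz with $\epsilon$'s then yields the boundary $L^2$ control of $\nabla u$ on $\sball x r$ by the traction data (in the first case) or by the tangential gradient (in the second case), modulo the bulk error term that already appears on the right of the lemma.

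The main obstacle — and the key technical contribution of \cite{DKV:1988} that I would invoke as a black box — is upgrading this boundary $L^2$ estimate for $|\nabla u|$ to an $L^2$ bound on the truncated non-tangential maximal function $\nontan{(\nabla u)_r}$. DKV accomplish this by representing $u$ on the star-shaped domain via single and double layer potentials for the Lam\'e system and exploiting $L^2$ boundedness of the associated singular integrals on Lipschitz surfaces (in the spirit of Coifman--McIntosh--Meyer) together with non-tangential maximal function bounds for layer potentials. Truncating the representation to $\sball x r$ and absorbing the contribution from the interior part of $\partial\locdom x {4r}$ into the bulk term $r^{-1}\int_{\locdom x {4r}}|\nabla u|^2 \, dy$ produces the stated estimate. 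The existence of non-tangential limits almost everywhere then follows immediately from the layer potential representation and the known jump relations. The whole scheme is dimension-independent because the analysis of layer potentials on Lipschitz surfaces in \cite{DKV:1988} is carried out in all dimensions.
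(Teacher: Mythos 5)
The paper does not actually prove this lemma: it states it, attributes it to \cite{DKV:1988}, and writes only that ``these estimates may be proven using arguments from \cite{DKV:1988}; see Mayboroda and Mitrea \cite{MR2181934} for results in two dimensions.'' Your sketch is therefore filling in a proof the paper chooses not to give, and the broad strategy you outline --- localize to the star-shaped domain $\locdom x{4r}$, run the Rellich identity with the radial field $h=y-x^\ast$ (for which $h\cdot\nu\gtrsim r$ on the graph portion), absorb the interior piece of $\partial\locdom x{4r}$ using interior estimates for the constant-coefficient system, and then invoke the DKV layer-potential machinery to upgrade from boundary $L^2$ control of $\nabla u$ to the truncated non-tangential maximal function --- is indeed the content of the DKV argument that the paper is implicitly invoking.

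One point you gloss over deserves a flag. For the Lam\'e system the Rellich identity does \emph{not} directly produce $\int_{\partial}(h\cdot\nu)\,|\nabla u|^2\,d\sigma$; it produces $\int_{\partial}(h\cdot\nu)\,a^{ij}_{\alpha\beta}\partial_j u^\beta\partial_i u^\alpha\,d\sigma$, and the ellipticity condition (\ref{KornElliptic}) is only Korn-type, so the integrand does not pointwise dominate $|\nabla u|^2$. The way out in \cite{DKV:1988} is a careful algebraic analysis on the boundary together with a judicious choice of the artificial parameter $s$ in (\ref{CoeffDef}): for one choice of $s$ the Rellich identity lets the conormal derivative control the tangential derivatives, for another it lets the tangential derivatives control the conormal derivative, and separate $s$-values are used for the traction and regularity halves of the lemma. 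Your sketch as written silently assumes a pointwise coercivity that is false for Lam\'e, so this step needs to be replaced by the $s$-dependent Rellich analysis. With that caveat, the proposal is a fair reconstruction of the cited argument, and the remaining steps (interior estimates for the complementary boundary piece, Coifman--McIntosh--Meyer plus layer potential jump relations for the non-tangential maximal function and the a.e. limits) are correctly identified as the black boxes that \cite{DKV:1988} supplies in all dimensions.
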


These estimates may be proven using arguments from \cite{DKV:1988}. See
Mayboroda and Mitrea  \cite{MR2181934} for results in two dimensions.

We give a notion of Whitney decomposition on the boundary of a
Lipschitz domain. To pass from $ \reals ^ n$ to a  more general set, 
we sacrifice the property that an open set is decomposed into disjoint
sets and 
 only require
that each point lie in at most finitely many elements of the
decomposition. This seems to be enough for our applications. 
Given  an open  set $ { \cal O } \subset \partial \Omega$, a {\em  Whitney
decomposition }of $  \cal  O$ is a 
collection of boundary balls $ 
\{ \Delta _ j = \sball  { x _j } { r_j }\}_ { j =1 } ^ \infty  $ so
that
1) ${\cal O } = \cup _ j \Delta _ j $, 2) for a constant $C_1$ that
may be chosen as large as we like,  
we have $ 
  C_1 r _ j  \leq  \dist ( \Delta _j , \partial \Omega \setminus {
  \cal O}) \leq  2C_1  r _j   $, 3) $\chi _ { \cal O}\leq  \sum _ j \chi _ { \Delta _j  } 
\leq c  _n \chi _ { \cal O } $,  
4) 
 $ \sum _j  \chi _{ \locdom { x_ j } { 4 r_j }} \leq C$. 
We leave it as an exercise to construct this decomposition. 

\note{ 
Note RB changed the constants in 2, again. 

Can we find a reference for this? A proof is in RB's notebook
  21
page 49.  Perhaps something similar is buried somewhere in the
literature on spaces of homogeneous type.  

The result can also be proved by carrying out a standard Whitney
decomposition in each coordinate cylinder. }

\begin{lemma}
\label{Whitney}
Suppose $u$ is a weak solution of the mixed problem (\ref{WMP}) with $ f_N$
in $L^ 2(N)$ and  $ f=0$.  Then for $ x\in \partial
\Omega$, $ r \in ( 0, 50r_0)$ and $ \rho \in \reals $, we have
$$
\int _ { \sball x r } {\nontan { ( \nabla u ) _{c\delta_ r}}}^ 2
\delta _ r ^ { 1- \rho}\,  d\sigma
\leq C  \left( \int  _ { \sball x { 2 r } } |f_N |^ 2 \delta _r ^ { 1- \rho}  \, d\sigma
+ \int _ { \locdom  x { 2 r}} |\nabla u |^ 2 \delta _r ^ { - \rho } \,
dy  \right ) .
$$

The constant in this estimate depends only on $M$, the dimension $n$,
and $ \rho$. 
\end{lemma}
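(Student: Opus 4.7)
The plan is to reduce the weighted estimate to the unweighted local estimates of Lemma \ref{LocalSolve} via a Whitney-type decomposition of $\sball{x}{r}$ adapted to $\Lambda=\partial D\cap\partial\Omega$. I would take a Whitney covering $\{\Delta_j=\sball{x_j}{r_j}\}$ of $\partial\Omega\setminus\Lambda$, keeping only those $\Delta_j$ that meet $\sball{x}{r}$, arranged so that $C_1 r_j\le \dist(\Delta_j,\Lambda)\le 2C_1 r_j$ for a large fixed constant $C_1$, and so that the enlarged families $\{\sball{x_j}{4r_j}\}$ and $\{\locdom{x_j}{4r_j}\}$ have bounded overlap with total union contained in $\sball{x}{2r}$ and $\locdom{x}{2r}$ respectively.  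Taking $C_1$ large forces each $\sball{x_j}{4r_j}$ to lie entirely in $D$ or entirely in $N$.  On $\Delta_j$ the distances $\delta$ and $\delta_r$ are both comparable to $r_j$, and the same comparison holds throughout $\sball{x_j}{4r_j}$ and $\locdom{x_j}{4r_j}$, so the weights $\delta_r^{1-\rho}$ and $\delta_r^{-\rho}$ are essentially constant (equal to $r_j^{1-\rho}$ and $r_j^{-\rho}$) on each Whitney piece.

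Next, I apply Lemma \ref{LocalSolve} on each $\sball{x_j}{r_j}$.  If $\sball{x_j}{4r_j}\subset N$ then $\partial u/\partial\rho=f_N$ on the boundary ball and the first (traction) estimate gives
\[
\int_{\Delta_j}\nontan{(\nabla u)_{r_j}}^{2}\,d\sigma\le C\left(\int_{\sball{x_j}{4r_j}}|f_N|^{2}\,d\sigma+\frac{1}{r_j}\int_{\locdom{x_j}{4r_j}}|\nabla u|^{2}\,dy\right).
\]
If $\sball{x_j}{4r_j}\subset D$ then $u=0$ in trace sense on this boundary ball, hence $\nabla_t u=0$, and the regularity estimate of Lemma \ref{LocalSolve} yields the same inequality with the $f_N$ term absent.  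Multiplying both sides by $r_j^{1-\rho}\approx \delta_r^{1-\rho}$ and bringing the factor inside the integrals (using the comparison $\delta_r\approx r_j$ on each of the three sets involved) produces
\[
\int_{\Delta_j}\nontan{(\nabla u)_{r_j}}^{2}\delta_r^{1-\rho}\,d\sigma\le C\left(\int_{\sball{x_j}{4r_j}}|f_N|^{2}\delta_r^{1-\rho}\,d\sigma+\int_{\locdom{x_j}{4r_j}}|\nabla u|^{2}\delta_r^{-\rho}\,dy\right).
\]

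To finish, I choose the truncation constant $c$ small enough that $c\delta_r(y)\le r_j$ for every $y\in\Delta_j$; by monotonicity of the truncated non-tangential maximal function in the truncation height, this lets me replace $\nontan{(\nabla u)_{r_j}}$ on the left by $\nontan{(\nabla u)_{c\delta_r}}$.  Summing over $j$ and invoking the bounded overlap properties $\sum_j\chi_{\sball{x_j}{4r_j}}\lesssim\chi_{\sball{x}{2r}}$ and $\sum_j\chi_{\locdom{x_j}{4r_j}}\lesssim\chi_{\locdom{x}{2r}}$ yields the claimed weighted inequality; the set $\sball{x}{r}\cap\Lambda$ contributes nothing because $\delta_r=0$ there.

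The main obstacle is purely bookkeeping: one must fix the Whitney constant $C_1$, the truncation constant $c$, and the dilation factor ``$4$'' in a consistent order so that (i) $\sball{x_j}{4r_j}$ separates cleanly into $D$ or $N$, (ii) for $y\in\Delta_j$ the approach region $\Gamma(y)\cap\ball{y}{c\delta_r(y)}$ stays inside $\locdom{x_j}{4r_j}$ where $Lu=0$, and (iii) the enlargements retain the finite overlap property with union in $\sball{x}{2r}$ and $\locdom{x}{2r}$.  None of these is deep, but all three constraints must be reconciled before the Whitney sum can be carried out cleanly.
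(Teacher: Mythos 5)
Your proposal follows the paper's proof essentially verbatim: construct a Whitney decomposition of $\partial\Omega\setminus\Lambda$ adapted to the distance to $\Lambda$, apply the $L^2$ local estimates of Lemma \ref{LocalSolve} on each Whitney ball (the traction estimate when the dilated ball lies in $N$, the regularity estimate with $\nabla_t u=0$ when it lies in $D$), use that $\delta_r$ is comparable to the Whitney radius $r_j$ throughout each piece to move the weight in and out of the integrals, and sum using bounded overlap. The paper compresses this into two sentences; the bookkeeping you spell out (the Whitney constant $C_1$, the truncation height $c\delta_r$, and the dilation factors) is exactly what those sentences elide.
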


\begin{proof} We   construct  a  Whitney decomposition  of $ \partial
\Omega \setminus \Lambda $  and let $ \{ \Delta _j = \sball { x_j }
{ r _j } \} $ denote the  boundary balls from the Whitney decomposition
which
intersect $ \sball x r $. This  gives  a family of
  boundary balls so that for each $j$, 
$ \sball x r \setminus \Lambda  
\subset \cup _ j \Delta _ j \subset \sball x { 2 r} $, 
$\sball {x_j}  { 2 r _j } \subset N$, or $ \sball {x_j} { 2 r_ j } \subset
  D$, $ r _ j \approx \delta _ { r} ( y ) $ for $y \in \locdom x {
    4r_j}$, and $ \sum \chi _ {\locdom x { 4 r_j }  } \leq C(M,n)$. 
We apply the estimate of Lemma \ref{LocalSolve}, sum on $j$, and use
the properties of the Whitney decomposition to obtain the Lemma. 
\end{proof}

The next lemma takes us from estimates in a  weighted $L^2$ space 
to  estimates in an unweighted $L^p$ space. Recall that $ \epsilon $
appears in our hypotheses (\ref{BIntegral}) and (\ref{Integral}) and
is required to satisfy $ 1< q_0 ( 1-\epsilon ) / ( 2 - \epsilon)$ with
$ q _0$ as in Theorem \ref{RH}. 

\begin{lemma} 
\label{Boundary}
Let $u$ be a weak solution of (\ref{WMP}) with $ f_N$ in $L^
\infty(N)$ and $f=0$. 

Let $p _ 0 = q _ 0 ( 1- \epsilon ) / ( 2 - \epsilon )$ and suppose
that $ 1 <  p < p_0$. For each boundary ball $ \sball x r $  with $ 0<
r < 50r_0$. 
$$
\left( \average _ { \sball x r } {\nontan  { ( \nabla  u ) _ { c \delta_r
    } } }  ^ p \, d\sigma \right) ^ { 1/p } 
\leq C (  \average _ { \locdom x { 2 r } } |\nabla u | \, dy 
+  \| f_ N \| _ { L ^ \infty ( \sball x { 2r } ) } ) .  
$$

The constant in this local estimate depends on $M$ and the constants
in (\ref{BIntegral}) and (\ref{Integral}). 
\end{lemma}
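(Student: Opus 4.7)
The plan is to derive the unweighted $L^p$ estimate by combining the weighted $L^2$ estimate of Lemma \ref{Whitney} with the higher integrability of $\nabla u$ given by the reverse H\"older inequality of Theorem \ref{RH}, using H\"older's inequality together with the scale-invariant integrability hypotheses (\ref{BIntegral}) and (\ref{Integral}) on powers of $\delta_r$. The key device is to introduce a real parameter $\rho$ (the exponent appearing in Lemma \ref{Whitney}) and to choose it so that two competing integrability conditions are simultaneously satisfied.

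First I would apply Lemma \ref{Whitney} on $\sball x r$ with a parameter $\rho$ to be chosen, obtaining
$$
\int_{\sball x r} \nontan{(\nabla u)_{c\delta_r}}^{2} \delta_r^{1-\rho}\, d\sigma
\leq C \int_{\sball x {2r}} |f_N|^2 \delta_r^{1-\rho}\, d\sigma
+ C \int_{\locdom x {2r}} |\nabla u|^2 \delta_r^{-\rho}\, dy.
$$
To convert the left side into the unweighted $L^p$ quantity of the statement I would apply H\"older's inequality with exponents $2/p$ and $2/(2-p)$; this produces the weight factor
$$
\left( \int_{\sball x r} \delta_r^{-(1-\rho)p/(2-p)}\, d\sigma \right)^{(2-p)/2},
$$
whose scale-invariant control by (\ref{BIntegral}) requires $(1-\rho)p/(2-p) < 1-\epsilon$. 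The boundary term involving $f_N$ is bounded trivially by $\|f_N\|_\infty^2 \int \delta_r^{1-\rho}\, d\sigma$, which is controlled by (\ref{BIntegral}) under the same condition.

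For the volume term I would fix $q \in (2, q_0)$ and apply H\"older again to write
$$
\int_{\locdom x {2r}} |\nabla u|^2 \delta_r^{-\rho}\, dy
\leq \left( \int |\nabla u|^q\, dy \right)^{2/q}
\left( \int \delta_r^{-\rho q/(q-2)}\, dy \right)^{(q-2)/q}.
$$
Integrability of the weight factor via (\ref{Integral}) requires $\rho q/(q-2) < 2-\epsilon$, while the $L^q$ norm of $\nabla u$ is controlled by Theorem \ref{RH} in terms of $\average_{\locdom x{\cdot}} |\nabla u|\, dy$ and $\|f_N\|_\infty$ (a harmless enlargement of the ball is tolerated here). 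Thus the $f_N$ contributions collapse to a single factor of $\|f_N\|_{L^\infty(\sball x{2r})}$.

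The two conditions on $\rho$ read $\rho > 1 - (1-\epsilon)(2-p)/p$ and $\rho < (2-\epsilon)(q-2)/q$. A short algebraic computation shows these two bounds coincide exactly when $p = q(1-\epsilon)/(2-\epsilon)$. Since the hypothesis is $p < p_0 = q_0(1-\epsilon)/(2-\epsilon)$, I can choose $q$ slightly below $q_0$ with $p < q(1-\epsilon)/(2-\epsilon)$, at which point the allowed interval for $\rho$ becomes nonempty and any interior value works. Combining the estimates and normalizing to averages yields the stated inequality. The main technical obstacle is the scale-bookkeeping: one must check that the various powers of $r$ arising from the weight integrals, the H\"older exponents, and the passage between totals and averages cancel exactly, so that the resulting inequality is genuinely scale-invariant in the form claimed.
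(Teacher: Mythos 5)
Your argument reproduces the paper's proof essentially step for step: H\"older with exponents $2/p$, $2/(2-p)$ on the boundary integral paired with Lemma \ref{Whitney} and (\ref{BIntegral}); then H\"older with exponents $2/q$, $q/(q-2)$ on the interior integral paired with (\ref{Integral}) and Theorem \ref{RH}; and the same parameter window for $\rho$ (your two constraints $\rho > 1-(1-\epsilon)(2-p)/p$ and $\rho < (2-\epsilon)(q-2)/q$ are algebraically identical to the paper's $(2-\epsilon)-2(1-\epsilon)/p < \rho < (2-\epsilon)(1-2/q)$), together with the observation that the window is nonempty precisely when $p<q(1-\epsilon)/(2-\epsilon)$. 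The closing covering argument to shrink $4r$ to $2r$ is also the paper's.
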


\begin{proof} 
We choose $p$ and $q$ with $ 1 < p < q ( 1- \epsilon ) /( 2- \epsilon)
< q _ 0 ( 1- \epsilon ) /( 2- \epsilon)$. We then fix $ \rho$ so that 
$( 2- \epsilon ) ( 1 - ( 2/q) )> \rho > ( 2-\epsilon ) - 2 ( 1-
\epsilon) / p$. We  apply H\"older's inequality with
exponents $ 2/ p$ and $ 2 / ( 2- p ) $,  the property
(\ref{BIntegral}), the lower bound for $\rho$ and  Lemma \ref{Whitney} to
obtain 
\begin{equation}
\label{Boundary1}
\begin{split}
\left ( \average _ {\sball x r } 
{\nontan{ ( \nabla  u ) _ { c\delta _ r } }}^ p \, d\sigma \right) ^ { 1/
  p}
 \leq &
\left ( \average _ {\sball x r } 
{\nontan{ ( \nabla u) _ { c\delta _ r } }}^ 2\delta _ r ^ { 1-\rho}  \, d\sigma \right) ^ { 1/2} 
\left ( \average _ { \sball x r } \delta _r ^ {( \rho -1 ) p / ( 2-p)
} 
\,d\sigma \right ) ^ { 1/ p - 1/ 2 } \\
\leq&
C r ^ { (\rho -1)/2 }  \left ( \average _ { \sball x r } 
{\nontan{ ( \nabla u ) _ { c\delta _ r } }}^ 2 \delta _r ^ { 1- \rho }
\,  \, d\sigma \right) ^ { 1/2} 
\\
\leq  &
C[ r ^ { \rho /2 } \left ( \average _ { \locdom x { 2 r} } |\nabla u
|^ 2 \delta _ r ^ { - \rho } \, dy  \right)^ { 1/2}  
 \\
& \qquad  
+ r ^ { ( \rho -1 ) / 2} 
\left ( \average _ { \sball x { 2 r}} |f_N |^ 2 \delta _r ^ { 1 - \rho } \, d\sigma \right) ^ { 1/2 }  ] .
\end{split}
\end{equation}

To estimate the integral over $ \Omega_ { 2r } ( x) $, we use
H\"older's inequality with exponents $ 2/q$ and $ q / ( q-2) $, the
property (\ref{Integral}), the upper bound  for $\rho$,  and
 Theorem \ref{RH}  to
obtain 
\begin{equation}
\label{Boundary2}
\begin{split} 
r ^ { \rho /2 } \left ( \average _ { \locdom x { 2r} } |\nabla u | ^ 2
\delta _ r ^ { -\rho } \, dy \right ) ^ { 1/2}  
& \leq C \left ( \average _ { \locdom x { 2r} } |\nabla u |^ q \, dy
\right ) ^ { 1/ q} \\
& \leq C [\average _ { \locdom x { 4r} } |\nabla u | \, dy + \left (
\average _ { \sball x { 4r} } |f_N | ^ { q ( n-1 ) /n  }\, d\sigma 
\right) ^ { \frac  n { q(n-1 ) } }  ]. 
\end{split}
\end{equation}
The Lemma follows from (\ref{Boundary1}),     (\ref{Boundary2}), and
a simple covering argument to replace $  4r$ by $ 2r$ on the right hand
side of the estimate.
\end{proof}

Before we proceed to the proof of Theorem \ref{Hardy}, we recall a
version of the energy estimate for solutions of the weak mixed
problem, (\ref{WMP}). 

\begin{lemma}
\label{Energy}
Let $u$ be a solution of the weak mixed problem (\ref{WMP}) with data
$f_N =a$, an atom for $N$ and $ f=0$. 
The solution $ u$ satisfies 
$$
\int _ { \Omega} |\nabla u |^ 2 \, dy \leq C.
$$

The constant $C$ depends on the global character of $\Omega$. 
\end{lemma}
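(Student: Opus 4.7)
The plan is to use $u$ itself as a test function in the weak formulation, exploit the global coercivity (\ref{Global}) to reduce matters to estimating the boundary pairing, and then use the atomic structure of $a$ together with the boundary Poincar\'e inequality (\ref{BoPo}) to bound that pairing linearly (rather than quadratically) in $\|\nabla u\|_{L^2(\Omega)}$. Absorbing one power of the gradient norm then yields the claimed uniform estimate.

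Concretely, since $u \in \sobolev 2 1 _D(\Omega)$ is an admissible test function, taking $\phi = u$ in the definition of a weak solution and invoking (\ref{Global}) gives
$$
c\int_\Omega |\nabla u|^2 \, dy \leq \int_\Omega a^{ij}_{\alpha\beta}\frac{\partial u^\beta}{\partial y_j}\frac{\partial u^\alpha}{\partial y_i}\, dy = \int_N a\cdot u\, d\sigma.
$$
Let $\tilde a$ be an atom for $\partial\Omega$, supported in some $\sball x r$, with $a = \tilde a|_N$. Since $u$ has vanishing trace on $D$, extending $\tilde a\cdot u$ by zero on $D$ gives $\int_N a\cdot u\, d\sigma = \int_{\partial\Omega} \tilde a\cdot u\, d\sigma$. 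The mean-zero property of $\tilde a$ on $\sball x r$ then allows us to subtract any constant from $u$, and in particular
$$
\int_{\partial\Omega}\tilde a\cdot u\, d\sigma = \int_{\sball x r}\tilde a\cdot(u - \avg u x {2r})\, d\sigma.
$$

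I would then bound this by $\|\tilde a\|_{L^\infty}\int_{\sball x r}|u-\avg u x {2r}|\, d\sigma$ and invoke (\ref{BoPo}) with $p=1$ (which, in dimension $n=2$, yields $q=1$) followed by Cauchy--Schwarz on the resulting gradient integral:
$$
\int_{\sball x r}|u-\avg u x {2r}|\, d\sigma \leq C\int_{\locdom x {2r}}|\nabla u|\, dy \leq C|\locdom x{2r}|^{1/2}\|\nabla u\|_{L^2(\Omega)} \leq C' r\|\nabla u\|_{L^2(\Omega)}.
$$
Since $\|\tilde a\|_{L^\infty}\leq \sigma(\sball x r)^{-1}\approx r^{-1}$ in two dimensions, this delivers $|\int_N a\cdot u\, d\sigma|\leq C''\|\nabla u\|_{L^2(\Omega)}$. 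Combined with the lower bound above, we get $c\|\nabla u\|_{L^2(\Omega)}^2 \leq C''\|\nabla u\|_{L^2(\Omega)}$, from which the lemma follows.

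The only point that requires care is the dichotomy hidden in the definition of $\avg u x {2r}$: when $\delta(x)\leq 2r$ (i.e.\ $\sball x r$ is near or meets $\Lambda$) this average is zero, and (\ref{BoPo}) is applicable precisely because a portion of $\partial\locdom x {2r}\cap\partial\Omega$ lies in $D$, where $u$ has vanishing trace; when $\delta(x)>2r$, we subtract a genuine interior average and the mean-zero property of $\tilde a$ makes the subtraction legitimate. Both cases fit into the same estimate, and the dependence of the final constant on the global character of $\Omega$ enters only through the constant in the coercivity inequality (\ref{Global}).
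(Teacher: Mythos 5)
Your argument is correct and is essentially the paper's proof unfolded: the paper invokes (\ref{BoPo}) abstractly to obtain the embedding $\sobolev 2 {1/2}_D(\partial\Omega)\subset BMO(\partial\Omega)$ and then uses $H^1$--$BMO$ duality plus the a priori energy estimate for weak solutions, whereas you carry out the same estimate by hand, testing with $u$, invoking coercivity (\ref{Global}), and bounding the boundary pairing directly via the mean-zero property of the atom and (\ref{BoPo}) with $p=q=1$. One small indexing remark: to apply (\ref{BoPo}) verbatim with $\rho = r$ and outer radius $2r$ you should subtract $\avg u x r$ rather than $\avg u x {2r}$; since the atom integrates to zero against any constant, this substitution is harmless and the rest of the argument is unchanged.
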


\begin{proof} According to (\ref{BoPo}), we have that $ \sobolev 2 {
    1/2} _D ( \partial \Omega) \subset BMO( \partial \Omega)$, the
  space of functions of bounded mean oscillation on $ \partial \Omega$. Thus if
  $a$ is an atom for $N$ the map $ u \rightarrow \int_ { \partial
    \Omega}  a^ \beta u^
  \beta \, d\sigma $ 
  lies in $ \sobolev 2 { -1} _ D ( \Omega)$. The estimate of the Lemma
  follows. 
\end{proof} 

We are only able to prove the following Lemma in two dimensions.
Finding an appropriate substitute for this Lemma is the main obstacle
to studying the mixed problem for the Lam\'e system in higher
dimensions.

\begin{lemma} 
\label{Decay} 
Let $a$ be an atom for $N$ that is supported in a  boundary ball $ \sball x r
$. We may find an exponent $ \gamma > 0$ which depends only on $M$ so
that if  $u$ is a solution of the weak mixed
problem (\ref{WMP}) with $f_N =a$ and $f=0$, then with $p$ as in 
Theorem \ref{Hardy},  we have  a constant $C$  so
that 
\begin{equation}
\label{DecayEstimate}
\left (\int _ {   \Sigma _k  } |\nabla u |^ p \, d\sigma \right ) ^
 { 1/p}  
\leq  C 2 ^ { - k \gamma } (2 ^ k r )^{ \frac 1 p -1  } , 
\qquad k \geq 0. 
\end{equation}

The constant in the estimate depends on the global character of
$\Omega$. 
\end{lemma}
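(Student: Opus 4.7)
The plan is to split the estimate into a ``near'' regime ($k$ bounded) and a ``far'' regime (where $\Sigma_k$ is well separated from $\supp a$), combining Lemma \ref{Boundary} with the H\"older continuity of the Green function from \cite{TKB:2012}. By H\"older's inequality on $\Sigma_k$ (whose surface measure is at most $C\, 2^k r$), it suffices to prove (\ref{DecayEstimate}) for a single $p$ slightly larger than $1$ that lies in the admissible range of Lemma \ref{Boundary}; the estimate for any smaller value of $p$ then follows from the exponent $1/p-1$ on the right.

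In the near regime, say $k \leq 2$, the set $\Sigma_k$ is contained in the boundary ball $\sball x {4r}$. The energy estimate Lemma \ref{Energy} gives $\int_\Omega |\nabla u|^2 \, dy \leq C$, and Cauchy--Schwarz on the local domain $\locdom x {8r}$ (whose area is of order $r^2$) yields $\average_{\locdom x {8r}} |\nabla u| \, dy \leq C r^{-1}$. Since $\|a\|_{L^\infty(N)} \leq C/r$, Lemma \ref{Boundary} produces
$$
\average_{\sball x {4r}} \nontan{(\nabla u)_{c\delta_r}}^p \, d\sigma \leq C r^{-p},
$$
and removing the truncation via standard interior Caccioppoli bounds on scales above $c\delta_r(x)$ gives $\|\nontan{(\nabla u)}\|_{L^p(\Sigma_k)} \leq C r^{1/p - 1}$. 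This matches (\ref{DecayEstimate}) since $2^{-k\gamma} \approx 1$ for bounded $k$.

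In the far regime $k \geq 3$, set $A_k = \locdom x {2^{k+1}r} \setminus \locdom x {2^{k-2}r}$. Since $\supp a \subset \sball x r$ is disjoint from $A_k$, the function $u$ solves the homogeneous mixed problem in $A_k$ (with $u=0$ on $D$ and $\partial u/\partial \rho = 0$ on $N$ where these intersect $A_k$). Writing $u$ through the Green function $G$ of \cite{TKB:2012} and exploiting the vanishing mean of $a$,
$$
u(y) = \int_{\sball x r} \bigl[G(y,z) - G(y,x)\bigr]\, a(z)\, d\sigma(z),
$$
the H\"older continuity of $G$ in its first variable from \cite{TKB:2012} together with $\|a\|_{L^1(\partial\Omega)} \leq 1$ yields the pointwise bound $|u(y)| \leq C\, 2^{-k\gamma}$ for $y \in A_k$. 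A Caccioppoli estimate on a shrunken annulus $A_k'$ (using that $u$ vanishes on the Dirichlet part of $\partial A_k$, so no constant needs to be subtracted there) then gives $\int_{A_k'} |\nabla u|^2 \, dy \leq C (2^k r)^{-2} \int_{A_k} |u|^2 \, dy \leq C\, 2^{-2k\gamma}$, so Cauchy--Schwarz produces $\average_{A_k'} |\nabla u| \, dy \leq C (2^k r)^{-1} 2^{-k\gamma}$. Covering $\Sigma_k$ by a bounded number of boundary balls of radius $\sim 2^{k-1} r$ whose associated local domains lie inside $A_k'$, and applying Lemma \ref{Boundary} with vanishing traction data on each, I obtain after removing the truncation
$$
\|\nontan{(\nabla u)}\|_{L^p(\Sigma_k)} \leq C\, 2^{-k\gamma}(2^k r)^{1/p-1},
$$
as desired.

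The principal obstacle is the pointwise bound $|u(y)| \leq C\, 2^{-k\gamma}$ in the far regime: one must verify that the matrix-valued Green function of \cite{TKB:2012} has the needed H\"older continuity in its first argument, uniformly as the second argument varies over $\sball x r$, and that any logarithmic factors intrinsic to the two-dimensional fundamental solution can be absorbed at the cost of a marginally smaller exponent. This reliance on the two-dimensional Green function is precisely what restricts the argument to $n=2$.
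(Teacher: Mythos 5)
Your argument tracks the paper's proof of Lemma \ref{Decay} closely: split into a near regime (bounded $k$) and a far regime, control the near regime with Lemma \ref{Boundary} together with the energy bound of Lemma \ref{Energy} and Cauchy--Schwarz, and control the far regime by feeding the decay of $u$ coming from the Green function estimates of \cite{TKB:2012} into Caccioppoli and then into Lemma \ref{Boundary} with vanishing traction data, finishing with a covering of $\Sigma_k$ by boundary balls of radius $\sim 2^k r$. Your spelling out of how the vanishing mean of the atom combines with the H\"older continuity of $G$ in its first argument to give $|u(y)|\leq C\,2^{-k\gamma}$ off $\locdom x{2r}$ is a welcome expansion of the step the paper compresses into the citation supporting (\ref{GDecay}).

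There is, however, a genuine gap in one step, and it comes precisely from proving more than the lemma asserts. Lemma \ref{Decay} controls $\int_{\Sigma_k}|\nabla u|^p\,d\sigma$, the trace of $\nabla u$; this follows immediately from Lemma \ref{Boundary} because the non-tangential limit satisfies $|\nabla u|\leq \nontan{(\nabla u)_{c\delta_r}}$ a.e.\ on $\partial\Omega$. You instead assert the bound for the full non-tangential maximal function $\nontan{(\nabla u)}$ on $\Sigma_k$ and justify this by ``removing the truncation via standard interior Caccioppoli bounds on scales above $c\delta_r(x)$.'' This does not close. For $z\in\Gamma(x)$ with $d(z)$ just above $c\delta_r(x)$, interior estimates give only $|\nabla u(z)|\lesssim \delta_r(x)^{-1}$ from the energy bound (or $\lesssim 2^{-k\gamma}\delta_r(x)^{-1}$ in the far regime), so the additional contribution to $\int_{\Sigma_k}\nontan{(\nabla u)}^p\,d\sigma$ is controlled only by $\int_{\Sigma_k}\delta_r^{-p}\,d\sigma$, and by (\ref{BIntegral}) (with $n=2$) this is comparable to $(2^kr)^{1-p}$ only for $p<1-\epsilon$ --- in conflict with your reduction to a single $p$ slightly larger than $1$. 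The paper deliberately stops at the trace estimate here, and the genuine bound on $\nontan{(\nabla u)}$ is obtained later in the proof of Theorem \ref{Hardy} via the representation formula (\ref{GradRep}), the Coifman--McIntosh--Meyer theorem, and the molecular characterization of Hardy spaces; that is the ``additional work'' the paper flags immediately after stating Lemma \ref{Decay}. If you drop the ``removing the truncation'' step in both regimes and simply read off the trace bound from Lemma \ref{Boundary}, your argument is correct and coincides with the paper's.
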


This Lemma gives the estimate of Theorem \ref{Hardy} for  $\nabla
u$. Additional work is needed to obtain the estimate for the
non-tangential maximal function.  

\begin{proof} We fix a boundary ball $ \sball x r $ and let $a$ be an
atom for $N$ that is supported in $\sball x r$. We let $u$ be a
solution of the weak mixed problem (\ref{WMP}) with $ f_N =a$ and $
f =0$.  We apply Lemma \ref{Boundary} on a boundary ball  $ \sball y
R$  and use the  normalization of the atom  to obtain that 
$$
\left ( \average _{\sball y R } |\nabla u |^ p \, d\sigma \right) ^ { 1/ p}
\leq C ( \average _ { \locdom y {2R}} |\nabla u | \, dz + \sigma (
\sball x r ) ^ {
  -1} ) . 
$$
When $ n =2$, the Cauchy-Schwarz  inequality and  the energy estimate
of Lemma \ref{Energy} gives that for any local domain $\locdom y R $ 
$$
\average _ { \locdom y R } |\nabla u | \, dz \leq \left ( 
\average _ { \locdom y R } |\nabla u | ^ 2 \, dz \right ) ^ { 1/2} 
\leq C/R.
$$
Together, these observations imply that  
$$
( \average _ { \sball y R } |\nabla u | ^ p \,d\sigma ) ^ { 1/ p }
\leq C ( 1 / R + 1/ r) 
$$
and if we set $ y =x$, we obtain  the estimate (\ref{DecayEstimate})
but with a constant that depends on $k$.  

We fix $k _ 0$ so that we have $ \dist (\sball x r , \Sigma _ {k_0} )
\approx r$ and establish (\ref{DecayEstimate}) for $k \geq k _0$ with
constant that does not depend on $k$.
Towards this end, we consider a boundary ball $\sball y R$ with $
\sball y { 8R} \cap \sball x r = \emptyset$. Since $ a = 0$ on $
\sball y { 2R}$, the estimate of Lemma \ref{Boundary} implies that
$$
( \average _ { \sball y R } |\nabla u | ^ p \, d\sigma ) ^ { 1/ p } 
\leq C \average _ { \locdom y { 2R } } |\nabla u | \, dz .
$$
The local ellipticity condition (\ref{LocalElliptic}) implies a
Caccioppoli inequality 

\begin{equation}
\label{Cacc}
\int _ { \locdom y {2R} } |\nabla u | ^ 2 \, dz \leq \frac C { R ^ 2}
\int _ { \locdom y { 4 R} } |u|^ 2 \, dz 
\end{equation}
provided that $u$ is a solution of (\ref{WMP}) with $ f_N =0 $  on $
\sball y {4 R}$ and $ f=0$ on $\locdom y {4R}$. Next we note that the Green function estimates of
Taylor, Kim, and Brown \cite{TKB:2012} and the arguments in
\cite{OB:2009} imply that we have a constant $C$ and exponent $ \gamma
> 0$ so that 
\begin{equation}
\label{GDecay}
|u(y) | \leq C\left ( \frac r { |x- y| } \right ) ^ \gamma  , \qquad y
\in \Omega \setminus \locdom x { 2 r} . 
\end{equation}
Combining the estimate of Lemma \ref{Boundary}, the H\"older inequality,
(\ref{Cacc}), and (\ref{GDecay}) gives that 
$$
\left ( \average_ { \sball y R } |\nabla u | ^ p \, d\sigma \right) ^ {
  1/ p } \leq \frac C R \left ( \frac r R \right ) ^ \gamma . 
$$
We may cover $ \Sigma _k $ by boundary balls with radius comparable to
$ 2 ^ k r$ and use the above estimate to obtain the estimate of the
Lemma for $ k \geq k _0 $. 
\end{proof}


Now we are ready to give the proof of Theorem \ref{Hardy}. 

\begin{proof}[Proof of Theorem \ref{Hardy}]
We let $ \phi\in C^ \infty ( \bar \Omega)$ be a smooth function  which
is zero outside $ \locdom x {4r}$ and is zero in a neighborhood of $
\Lambda$. We let $ \Gamma ^ { \alpha \beta}$ be the matrix fundamental
solution of the Lam\'e 
operator in $ \reals ^2$, thus $ (L\Gamma ^ { \cdot \gamma }) ^ \alpha
= \delta _ {\alpha \gamma} \delta _0$ where $ \delta _ { \alpha \gamma
} $ is the Kronecker delta 
and $ \delta _0$ is the Dirac delta measure at 0. 
We claim the following representation formula for derivatives of $u$,
with $ \alpha , k =1,2$, 
\begin{equation}
\label{GradRep}
\begin{split}
\phi(x) \frac { \partial u ^ \alpha }{\partial x _k }( x)
= &\int _ { \Omega } \frac \partial { \partial y _ i } a^ {ij}_ { \alpha \beta}
\frac { \partial \Gamma ^ { \beta \gamma } }  { \partial y _ j}  ( x- \cdot) 
\phi \frac { \partial u ^ \alpha } { \partial y _ k } \, dy  \\
= & \int _ { \partial \Omega }  \nu _ i  a ^ { ij } _ { \alpha \beta} 
\frac { \partial  \Gamma ^ {\beta\gamma} } { \partial y _ j }  
(x-\cdot) \phi \frac { \partial u ^ \alpha } { \partial y _k }
- \nu _ k  a ^ { ij } _ { \alpha \beta} 
\frac { \partial  \Gamma ^ {\beta\gamma} } { \partial y _ j }  
(x-\cdot) \phi \frac { \partial u ^ \alpha } { \partial y _i }  \\
& +\nu _ j  a ^ { ij } _ { \alpha \beta} 
\frac { \partial  \Gamma ^ {\beta\gamma} } { \partial y _ k }  
(x-\cdot) \phi \frac { \partial u ^ \alpha } { \partial y _i }
\,d\sigma 
- \int _ { \Omega } a ^ {ij}_{\alpha \beta} \frac {\partial  \Gamma ^ { \beta \gamma} } {\partial y_j} (x-\cdot)
\frac { \partial \phi }{ \partial y _i } \frac { \partial u ^ \alpha }{\partial y _k}  \\
&-a ^ {ij}_{\alpha \beta} \frac {\partial  \Gamma ^ { \beta \gamma} }   {\partial  y _j } (x-\cdot)
\frac { \partial \phi }{ \partial y _k } \frac { \partial u ^ \alpha }{\partial y _i}  
 +a ^ {ij}_{\alpha \beta} \frac {\partial  \Gamma ^ { \beta \gamma} }   {\partial  y _k } (x-\cdot)
\frac { \partial \phi }{ \partial y _j } \frac { \partial u ^ \alpha }{\partial y _i} 
\, d\sigma. 
\end{split} 
\end{equation}
Of course, the integral to the right of the first equals sign  should be
interpreted as the Dirac delta acting on a smooth function. 
If $u$ is smooth, the proof of  (\ref{GradRep}) is accomplished by integration
by parts. We begin by moving the $ \partial / \partial y _i$ derivative,
then the $\partial/\partial y _k$ derivative, and finally the 
$\partial/\partial y _j$ derivative. Since Lemma \ref{LocalSolve} gives non-tangential
maximal function estimates for $u$ and  $ \nabla u $ away from $ \Lambda$ and we
assume that $ \phi$ is supported in a coordinate cylinder and is zero in a 
neighborhood of $ \Lambda$,  standard limiting arguments allow us to 
obtain the representation formula  (\ref{GradRep})  without the
assumption that $u$ is smooth in $ \bar \Omega$. 

Next, we remove the restriction that $ \phi$ vanish  near $ \Lambda$. 
Let 
$ \psi _ \tau $ be a function which is one when $ \delta (x) > 2 \tau$,  zero
if $ \delta (x) < \tau$, and satisfies $| \partial ^ \alpha    \psi _ \tau
/\partial x^ \alpha | \leq C_ \alpha / \tau ^ {|\alpha |}$. 
Replace $ \phi$ in (\ref{GradRep}) by $ \psi _ \tau \phi$ where $
\phi$ is a smooth function which is zero outside $ \locdom x { 4r}$,
but is not required to vanish on $ \Lambda$.  Using H\"older's
inequality, the terms involving derivatives of $ \psi_\tau$  
in (\ref{GradRep}) have the upper bound 
\begin{equation} 
\label{Collar}
\int _ { \Lambda _ { \tau, r} } |\nabla \Gamma ( x- \cdot)| |\nabla u
| |\phi \nabla \psi _\tau | \, dy
\leq \frac {C_x} \tau \left( \int _ {
\Lambda _ {\tau, r} } |\nabla u | ^ q \, dy \right ) ^ { 1/q} 
|\Lambda _ {\tau ,r } |^ { 1/q'} 
\end{equation}
where $ \Lambda _ { \tau, r} = \locdom x {4r} \cap \{ y : \tau <  \delta ( y ) < 2 \tau \}$
and $q$ is chosen to satisfy  $ ( 2- \epsilon ) / ( 1- \epsilon ) < q < q_ 0$. From the
estimate (\ref{Integral}), we obtain the upper bound
$$
|\Lambda _ {\tau, r} |\leq C \tau ^ a \int _ {\locdom x { 4r}} \delta ^ {-a} \, dy  \leq C _r \tau^ a
$$
provided $ 0<  a < 2 -\epsilon$. Our choice of $q$ implies $ q ' < 2 -\epsilon$ 
so we set $a = q'$. Then the estimate (\ref{RHI}) for $ \nabla u$ implies that 
the right-hand side of (\ref{Collar}) tends to zero with $\tau$. From
the estimate of 
Lemma \ref{Decay}, we have $\nabla u$ is in $L^ p ( \partial \Omega)$ for some $p\geq
1$, thus we may use the dominated convergence theorem  and let $ \tau
$ tend to zero in the  
boundary terms of (\ref{GradRep}).  This removes the restriction  that
$\phi$ vanish in a neighborhood of $ \Lambda$.

To establish (\ref{HardyEstimate}), we begin with the representation formula
(\ref{GradRep}).  We let $ \phi$ run over a
partition of unity and sum to obtain the representation 
$$
\frac { \partial u ^ \alpha } { \partial x _ k } ( x) =
\int _ { \partial \Omega } a ^ { ij } _ {\alpha \beta  } \frac {
  \partial \Gamma ^ { \beta \gamma } } { \partial y _j }  (x - \cdot ) 
( \nu _ i \frac { \partial  u ^ \alpha } { \partial y _ k } - \nu _ k
  \frac { \partial u ^ \alpha } { \partial y _ i } ) 
+ \frac { \partial \Gamma  ^ { \beta \gamma } } { \partial y _ k } (
x- \cdot ) \nu _ j a ^ { ij }_ {\alpha  \beta  } \frac { 
\partial u ^
  \alpha } { \partial y _ i } \, d\sigma , \alpha,k = 1, 2.  
$$
The divergence theorem implies that 
\begin{gather}
\label{MZ1} 
\int _ { \partial \Omega } \nu _ i a ^ { ij } _ { \alpha \beta } \frac
     { \partial u ^ \beta } {\partial y _ j } \, d\sigma = 0, \qquad \alpha
     =1,2 \\
\label{MZ2} 
\int _ { \partial \Omega } \nu_ i \frac { \partial u ^ \beta } {
  \partial y _k } -  \nu _ k \frac { \partial u ^ \beta } { \partial y _
  i } \,d \sigma = 0 , \qquad i, k, \beta = 1,2
\end{gather}
where the techniques used to prove  (\ref{GradRep})
help to justify the application of the
divergence theorem. We may use the  theorem of Coifman, McIntosh,
and Meyer \cite{CMM:1982}, the observations  (\ref{MZ1}-\ref{MZ2}),  the
size estimates of Lemma \ref{Decay}, and the notion of molecule (see
\cite{CW:1976}) to 
establish the estimates (\ref{HardyEstimate}). 
For $ p > 1/ ( 1+ \gamma)$, the estimate (\ref{Hardy2}) follows easily from
(\ref{HardyEstimate}). 
\end{proof}

\section{Uniqueness}
\label{JustOne}

We take advantage of the restriction to $n=2$ to give a different
proof of uniqueness than found in \cite{OB:2009,TOB:2011}. The argument uses a
version of the Hardy-Littlewood-Sobolev  theorem on fractional
integration.  The
proof is adapted from one found in \cite{RB:1995a} and \cite[Lemma
  11.9]{MW:2011} where a result similar to Lemma \ref{FI} is
established  in
dimensions $n \geq 3$.  The result for $n=2$ is  more technically
involved.

\begin{lemma}
\label{FI} 
Let $L$ be the Lam\'e operator with coefficients satisfying
(\ref{Elliptic}). 
Let $ \Omega\subset \reals ^2$ be a Lipschitz domain and suppose that
$u$ is a solution 
of $L u =0$ with $  {\nontan  { ( \nabla u )} }  \in L^ p ( \partial
\Omega)$. If $p< 1$, $q$ is defined by $1/q=1/p-1$, and $K$ is a
compact subset of $ \Omega$, then we have 
$$
\| \nontan u \| _ { L^ q ( \partial \Omega)}
\leq C ( \| \nontan  {{ ( \nabla u )}} \|_  { L ^ p ( \partial \Omega)}  
+ r _ 0 ^ { 1/p -1} \sup _ { K } |u | ) . 
$$
The constant in this estimate depends on $p$, $K$ and the global
character  of $\Omega$. 
 \end{lemma}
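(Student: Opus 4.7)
The plan is to reduce the bound on $\nontan u$ to a one-dimensional Sobolev-type embedding on $\partial\Omega$, exploiting that $n=2$ makes $\partial\Omega$ one-dimensional. First, I would normalize by subtracting $u(x_*)$ for a fixed $x_*\in K$; since constant vectors are annihilated by $L$, this leaves $Lu=0$ intact and the subtracted constant is absorbed into the $r_0^{1/p-1}\sup_K|u|$ term of the conclusion.

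Second, for every $x\in\partial\Omega$ and every $y\in\Gamma(x)$, I would join $x_*$ to $y$ by a Lipschitz path $\gamma$ of length $\lesssim r_0$, built from an interior piece lying in a slight enlargement $K'\subset\Omega$ of $K$, together with a final piece descending non-tangentially to $y$.  Along the interior piece, standard interior gradient estimates for $L$-solutions bound $|\nabla u|$ by $C\sup_K|u|/r_0$, so the interior contribution to $u(y)=\int_\gamma\nabla u\cdot d\tau$ is controlled by $\sup_K|u|$.  Parametrizing the descent piece by the height $h=\dist(\gamma(h),\partial\Omega)\in(\delta(y),r_0)$ and using that $\gamma(h)\in\Gamma(z)$ for every boundary point $z\in\sball{x}{ch}$, one gets
\[
|\nabla u(\gamma(h))|\;\le\;\frac1{\sigma(\sball{x}{ch})}\int_{\sball{x}{ch}}\nontan{(\nabla u)}(z)\,d\sigma(z).
\]
Integrating in $h$ and applying Fubini (with the two-dimensional identity $\sigma(\sball{x}{ch})\approx ch$) produces the pointwise majorant
\[
\nontan u(x)\;\le\;C\sup_K|u|\;+\;C\int_{\partial\Omega}k(x,z)\,\nontan{(\nabla u)}(z)\,d\sigma(z),
\]
where $k(x,z)\lesssim\log(r_0/|x-z|)_+$ is a kernel of critical fractional-integral type.

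The remaining task, and the main obstacle, is to promote this pointwise bound to the target $L^p\to L^q$ inequality with $1/q=1/p-1$ in the range $p<1$.  For a generic nonnegative $g\in L^p$ the logarithmic potential need not lie in $L^q$ when $p<1$, so one has to exploit the fact that $\nontan{(\nabla u)}$ is the nontangential maximal function of the gradient of a Lam\'e solution, not an arbitrary positive function.  The plan is to decompose the boundary contribution through the atomic characterization of the Hardy-Sobolev space $H^{1,p}(\partial\Omega)$ recalled in Section~\ref{Functions}: each $1$-atom $A$ supported in a boundary ball $\Delta$ satisfies $\|A\|_{L^q(\partial\Omega)}\le \sigma(\Delta)^{1/q}=\sigma(\Delta)^{(1-p)/p}$, and an atomic sum $u=\sum\lambda_jA_j$ is controlled using the elementary inequality $\sum_j\lambda_j\sigma(\Delta_j)^{1/q}\le(\sum_j\lambda_j^p\sigma(\Delta_j)^{1-p})^{1/p}$, valid for $0<p\le 1$ and $\lambda_j\ge 0$.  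Extracting the atomic structure for $u$ from the assumption $\nontan{(\nabla u)}\in L^p$ is where the two-dimensional hypothesis plays a decisive role, through the tangential-derivative identities (\ref{MZ1})--(\ref{MZ2}) and the explicit two-dimensional fundamental solution used in the proof of Theorem~\ref{Hardy}.
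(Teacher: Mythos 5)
Your path-integration reduction to a pointwise potential estimate is close in spirit to the paper's starting point (the paper likewise uses the fundamental theorem of calculus along a vertical segment, inequality (\ref{FI5})), but the way you propose to finish is where the proposal breaks down.  You correctly observe that the logarithmic potential $k(x,z)\lesssim\log(r_0/|x-z|)_+$ is critical and does \emph{not} map $L^p(\partial\Omega)\to L^q(\partial\Omega)$ for $p<1$ with $1/q=1/p-1$; Hardy--Littlewood--Sobolev fails there.  But the fix you suggest — extracting an atomic decomposition of $u$ in $H^{1,p}(\partial\Omega)$ from the hypothesis $\nontan{(\nabla u)}\in L^p(\partial\Omega)$ via (\ref{MZ1})--(\ref{MZ2}) and Theorem~\ref{Hardy} — is not available.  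Theorem~\ref{Hardy} goes the other way: it solves the mixed problem with atomic traction data and shows the resulting $\nontan{(\nabla u)}$ lies in $L^p$.  Nothing in the paper (nor in your sketch) provides the converse characterization ``$\nontan{(\nabla u)}\in L^p\Rightarrow u|_{\partial\Omega}\in H^{1,p}$,'' and in the very context where Lemma~\ref{FI} is invoked (the uniqueness argument of Theorem~\ref{Uniq}) such a characterization would essentially be part of what one is trying to establish.  So the last paragraph of the proposal is a genuine gap, not a routine reduction.

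The paper avoids the critical exponent entirely by an interpolation trick that you should internalize.  Along the vertical ray, one writes
\[
\Bigl|\frac{\partial u}{\partial y_2}\Bigr|
=\Bigl|\frac{\partial u}{\partial y_2}\Bigr|^{1-\theta}\,
\Bigl|\frac{\partial u}{\partial y_2}\Bigr|^{\theta},
\qquad 0<\theta<p<1,
\]
then bounds the $(1-\theta)$-factor by $d(y)^{-(1-\theta)}\hlmax(\nontan u^{\,\epsilon})(x)^{(1-\theta)/\epsilon}$ (applying (\ref{FI1}) to $\nabla u$ with $|\beta|=1$) and the $\theta$-factor by an average of $\nontan{(\nabla u)}^\theta$ over $S(x,(y_1,t))$ (applying (\ref{FI1}) to $\nabla u$ with $|\beta|=0$ and exponent $\theta$).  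Integrating in $t$ and using Tonelli produces a \emph{sub-critical} fractional integral $I_\theta$, and yields the self-referential pointwise inequality (\ref{FI2})
\[
\nontan u(x)\lesssim \hlmax(\nontan u^{\,\epsilon})(x)^{(1-\theta)/\epsilon}\,
I_\theta\bigl(\nontan{(\nabla u)}^{\,\theta}\bigr)(x)
+\sup_{K'}|u|
+r_0^{\,1-1/p}\|\nontan{(\nabla u)}\|_{L^p(\partial\Omega)}.
\]
Now $\nontan{(\nabla u)}^{\,\theta}\in L^{p/\theta}$ with $p/\theta>1$, so $I_\theta:L^{p/\theta}\to L^{q/\theta}$ is bounded; $\hlmax$ is bounded on $L^{q/\epsilon}$ for small $\epsilon$; one raises to the $q$-th power, integrates, splits by H\"older with exponents $1/\theta$, $1/(1-\theta)$, and closes via Young's inequality.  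The self-reference (the maximal function of $\nontan u$ on the right) is what forces the a priori step: one first works on Verchota's approximating domains $\Omega_k$, where $\|\nontan u\|_{L^q(\partial\Omega_k)}$ is finite, obtains the estimate with a constant independent of $k$, and then lets $k\to\infty$.  Your proposal has no analogue of this absorption step; rebuilding the argument around it is the missing idea.
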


 For this argument it will be convenient to use 
$ \ssb x r = \ball x r  \cap \partial \Omega$ for $ x\in \partial  \Omega$ 
 in place of $ \sball x  r$ and to define the 
Hardy-Littlewood maximal operator on the boundary by 
$$
\hlmax f(x) = \sup _ { r > 0 } \average _ { \ssb x r } |f| \, d\sigma .
$$

\begin{proof}
We let $ \{ \Omega _k \}$ be a sequence of approximating domains with
$ \bar \Omega _k \subset \Omega$  as
in Verchota \cite{GV:1982}. We will show that we have 
\begin{equation}
\label{FI0} 
\| \nontan u \| _ { L^ q ( \partial \Omega_k)} 
\leq C ( \| \nontan { ( \nabla u ) } \| _ { L^ p ( \partial \Omega_k)}
+ r _ 0 ^ { 1/ q } \sup _ { K } |u | )
\end{equation}
where $p$ and $q$ are as in the  statement of the Lemma  and the constant $C$ is
independent of $k$. Letting $k $ tend to $\infty$ gives the result for
$ \Omega$. The advantage of working on the subdomain $ \Omega_k$ is
that we may assume that $ \| \nontan u \| _ { L^ q ( \partial \Omega_k
  )}$ is finite. In what follows, we drop the subscript $k$ and assume
that 
$\|\nontan u \| _ { L ^ q ( \partial \Omega)} $ is finite. 

To begin we observe that if $ y \in \Gamma (x) $,
for some $ x\in \partial  \Omega$, then for $ \epsilon >0$, and $ \beta $ a
multi-index, we have 
\begin{equation}
\label{FI1}
| \frac { \partial ^ \beta u } {\partial y ^ \beta } (y)  |
\leq \frac C { d(y) ^ { |\beta |} } 
\left ( \average _ {S(x,y) } { \nontan{ u }}  ^ \epsilon  \, d\sigma
  \right ) ^ { 1/ \epsilon }.
\end{equation}
Here, $ S(x,y) = \{ z : |z-x| < ( 2 + 2 \alpha ) d(y) \} \cap \partial
\Omega$ with $ \alpha $ as in the definition of the approach regions $
\Gamma (x)$ and recall that $ d(y) = \dist(y, \partial \Omega)$. To
establish (\ref{FI1}), choose $ \hat x \in \partial 
\Omega$ so that $|\hat x - y | = d(y )$. We claim that if $ |w -y |
< \min \{ \alpha / 4 , \alpha / ( 2 + 2\alpha )\} d(y) $ and $|z- \hat x| <
\alpha d(y ) / 4$, then $ w\in \Gamma (z) $. To establish this claim,
first note that $ d( y) \leq d(w) + | w-y |  \leq  d(w) + \alpha d( y
) /( 2 + 2 \alpha )$ and hence $ d ( y ) \leq  d(w)  ( 2 + 2 \alpha) /
( 2 + \alpha )$. Using the triangle inequality and our choices for $
z$ , $ \hat x$, and $w$, we have $|z-w | \leq |w-y | + |y - \hat x | +
|\hat x -z | < ( 1 + \alpha /2 ) d( y) \leq ( 1+ \alpha ) d(w) $. 
Also, if $z \in \ssb { \hat x}  { \alpha d( y )
  /4}  $, then by the triangle inequality $ |z-x | \leq |z- \hat x | + |\hat x -y |
+ |y - x | < ( 2 + 2 \alpha ) d( y)$ and we  conclude that 
  $ \ssb { \hat x } { \alpha d( y ) / 4 } \subset \ssb x  { ( 2 + 2
  \alpha ) d ( y ) } $. 
 Thus, with $ r = \min \{ \alpha
/4, \alpha / ( 2 + 2 \alpha )  \} d( y)$, we may use interior estimates
for derivatives of $u$ to obtain
$$
 |  \frac { \partial ^ \beta  u } { \partial y ^ \beta } (y)
  | 
\leq \frac  C { d(y ) ^ { |\beta  | } } \sup _{ w \in \ball y r } |u
(w) |
\leq \frac C { d( y ) ^ { |\beta |} } \left ( \average _ { S(x,y ) } 
{   \nontan  u  } ^ \epsilon  \, d\sigma \right ) ^ { 1/ \epsilon } . 
$$
The second inequality follows since $ \ball y r \subset \Gamma (z) $
for all
$ z $ in $\ssb  x { \alpha d(y) / 4 }  $  and hence $ \ssb x {\alpha
  d(y) } \subset S(x,y)$.  We have established
(\ref{FI1}).

To begin the proof of (\ref{FI0}), we let 
 $\{ Z _ i = \cyl { x _ i } { r _ 0 }\} _ { i = 1 } ^ N$ be the
covering 
of $ \partial \Omega$ by coordinate cylinders  guaranteed by the
definition of a Lipschitz domain and recall that we are in $ \reals ^
2$. 
We let $ T _ i = \{ ( y _1, y _2 ) : |y _ 1 - x _ {i,1} |  \leq r _
0  ,  y _ 2 = x _ {i,2}   + ( 4M +2 ) r _ 0 \}$ denote the ``top'' of
each cylinder $Z_i$ and then let $ K ' $ denote the compact set $
\cup_i T _i $. Applying the 
estimate (\ref{FI1}) to $ \nabla u$ with $ \beta =0$ and $ \epsilon =
p$ gives that for any pair $K$ and $J$  of  compact subsets of $ \Omega$,  we have
\begin{equation}
\label{FI11}
\sup _ { J } |u|\leq  \sup _K |u | + C r _ 0 ^ { 1- 1/p } \|
\nontan {(\nabla u)} \|_{ L^p( \partial \Omega)}.
\end{equation}

The heart of the proof is to establish for $ \epsilon > 0$, $ \theta
\in ( 0, 1)$, and  $x \in \partial \Omega$,  that
\begin{equation}
\label{FI2}
|\nontan u (x) | \leq C ( \hlmax ( { \nontan u }^ \epsilon) ( x) ^ {
  \frac { 
  1- \theta }  \epsilon }\,  I
_ \theta ( \nontan { (\nabla u ) }) ( x)^\theta  +  \sup _{K'} |u | +
 r_ 0 ^ {1 - \frac 1 p } \| \nontan { ( \nabla u ) }\|_{L^ p ( \partial \Omega)} )
\end{equation}
where $ I_ \theta$ is the fractional integral defined by 
$$
I _ \theta f(x) = \int _ { \partial \Omega } f(y ) | x- y | ^ { \theta
  -1 }\, d\sigma.
$$

We accept the claim (\ref{FI2}) for the moment and complete the proof
of the Theorem. We raise both sides of (\ref{FI2}) to the power $q$,
integrate over $ \partial \Omega$, and apply the H\"older inequality 
 with exponents $
1/ \theta$ and $ 1/(1-\theta)$ to obtain 
$$
\| u ^ * \|_ { L^ q( \partial \Omega)} \leq C ( \| \hlmax ( { u^ {*
    \epsilon} } ) \| _ { L^ {q/\epsilon}( \partial \Omega)} ^
   {(1-\theta)/\epsilon}
 \| I _ \theta ( \nontan{ (\nabla u )})\| _ { L^ q ( \partial \Omega )
 } ^ \theta   + r _ 0 ^ { 1/q} \sup _ { K'} |u| + \| \nontan { (
   \nabla u )} \| _ { L^p ( \partial \Omega )} )
$$
We choose $ \epsilon $ so that  $ q/ \epsilon > 1$  and hence  the maximal
operator $\hlmax$ is bounded on $ L^ { q/\epsilon } ( \partial
\Omega)$, choose $ \theta $ with $ 0 < \theta < p$   so  that $ I _
\theta : L ^ { p / \theta } ( \partial \Omega) \rightarrow  L ^ { q /
  \theta } ( \partial \Omega ) $ with $ \theta/ q = \theta (1 / p -
1) $.  With these choices, we may use Young's inequality, our {\em a
  priori }assumption that $ \nontan u \in L^ q ( \partial \Omega)$ and
(\ref{FI11}) to obtain the Theorem.

Finally, we establish the key estimate (\ref{FI2}). We fix $ x\in
\partial \Omega$ and consider $ y \in \Gamma (x) $ and suppose that
$y$ lies in a coordinate cylinder $ Z_i$. If $ \alpha $ is large
enough, we have that the line segment $\{ y + te_2 : y _ 2 < t < r _0
\}$ lies in $\Gamma(x)$. (As usual, we are using the coordinate
system  for the  coordinate cylinder $Z_i$ to state this condition.) 
Thus, we may use the fundamental theorem of calculus to write 
\begin{equation}
\label{FI5}
|u (y) | \leq \int _ { y _ 2 } ^ { r _ 0 } |\frac { \partial u }{
  \partial y _ 2 } ( y_ 1, t ) |  \, dt 
+ \sup _ { K ' } |u | .
\end{equation}
We apply the estimate (\ref{FI1}) to obtain the bound 
$$
| \frac { \partial u }{ \partial y _ 2} ( y _ 1, t ) | ^ { 1- \theta } 
\leq \frac C { d ( y_1, t ) ^ { 1- \theta } } \hlmax ( { \nontan u } ^
\epsilon) (x) ^ { ( 1-\theta )/ \epsilon } .
$$
Also, if we apply estimate (\ref{FI1}) to $ \nabla u $, 
 then we obtain 
$$
|\nabla u ( y_1, t ) | ^ \theta \leq C  \average _ { S ( x, ( y
  _ 1, t ))} { \nontan { ( \nabla u) } }^ \theta \, d\sigma   . 
$$
We write $ |\partial u / \partial y _2|^ { 1-\theta +\theta} $ in
(\ref{FI5}) and use the two previous estimates to obtain 
\begin{equation}
\label{LDE}
\begin{split}
\int _ { y _2 } ^ { r_ 0 } | \frac { \partial u }{ \partial y _ 2 } (
y _ 1, t ) | \, dt 
& \leq  C \hlmax ( { \nontan u } ^ \epsilon ) ( x ) ^ { ( 1- \theta ) /
  \epsilon } \\
&\qquad \times  \int _ { y _ 2} ^ { r _ 0 } \frac 1 { d ((y _ 1 , t ))^ {
    2-\theta } } \int _ { S ( x, ( y_ 1, t )) } { \nontan { ( \nabla u )
    } }^ \theta \, d\sigma \, dt  \\
& \leq C \hlmax ( {\nontan  u } ^ \epsilon ) ( x) ^ { ( 1- \theta ) /
  \epsilon }I _ \theta ( {\nontan { (\nabla u )} }  ^ \theta ) ( x) . 
\end{split}
\end{equation}
The last step uses Tonelli's  theorem to change the order
of integration. The estimates  (\ref{FI5}) and  (\ref{LDE})  give a bound
for the supremum of $u$ in  $\Gamma (x) \cap (\cup Z_i)$. Estimate
(\ref{FI11}) allows us to estimate the supremum of $u$ in $ \Omega
\setminus \cup _ i Z _i$. Together these two observations give
(\ref{FI2}). 
This completes the proof of the estimate
(\ref{FI2}) and hence the Lemma. 
\end{proof}

\note{Many changes to the proof of the previous theorem. Read
  carefully.}

We are ready to give uniqueness for  the $L ^ p 
$-mixed problem. 
\begin{theorem} 
\label{Uniq}
Suppose $ \Omega \subset \reals ^2$ is  Lipschitz domain,  $ \Omega$ and $D$ satisfy 
(\ref{Corkscrew}), (\ref{BIntegral}), and (\ref{Integral}) with $
q_0(1-\epsilon)/ ( 2- \epsilon ) > 1$, and  $ L$
is the Lam\'e system with coefficients  satisfying (\ref{Elliptic}). 
There exist $ p_1 < 1 $ so that if $ p > p_1$  and  $u$ is a solution of
(\ref{MP}) with  $f_N =0$ and $ f_D =0$, then $u =0$. We assume that
the traction operator 
 exists in the sense of non-tangential limits  if  $p \geq 1$
and in the sense of  
(\ref{HPBV}) if $ p < 1$. 
\end{theorem}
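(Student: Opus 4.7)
The plan is to prove $u \equiv 0$ by a Green's identity argument on Verchota's approximating domains $\{\Omega_k\}$, followed by the coercivity estimate (\ref{Global}). For a solution $u$ of (\ref{MP}) with $f_N=0$, $f_D=0$, and $\nontan{(\nabla u)} \in L^p(\partial\Omega)$, I would start from
\begin{equation*}
\int_{\Omega_k} a^{ij}_{\alpha\beta}\,\frac{\partial u^\beta}{\partial y_j}\,\frac{\partial u^\alpha}{\partial y_i}\, dy
 = \int_{\partial\Omega_k}\left(\frac{\partial u}{\partial \rho_k}\right)^{\!\alpha} u^\alpha\, d\sigma,
\end{equation*}
where $\partial/\partial\rho_k$ is the traction operator associated with $\Omega_k$. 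If the right-hand side can be shown to vanish as $k\to\infty$, then monotone convergence of the left-hand side and (\ref{Global}) applied in $\sobolev 2 1 _D(\Omega)$ force $\nabla u\equiv 0$; since $\Omega$ is connected and $u=0$ on the nonempty open set $D$, this gives $u\equiv 0$.

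The second step is the limit in the boundary integral. Using the bi-Lipschitz maps $\Lambda_k$ from Verchota's construction, pull back to $\partial\Omega$ and split it into the $D$-part, the $N$-part, and a thin collar $\{\delta<\tau\}$ around $\Lambda$. On the $D$-part, $u\circ\Lambda_k\to 0$ non-tangentially a.e., while $(\partial u/\partial\rho_k)\circ\Lambda_k$ is dominated by $\nontan{(\nabla u)}\in L^p(\partial\Omega)$. On the $N$-part, one exploits the vanishing of $f_N$ together with the boundary regularity of $u$. The collar contribution is made arbitrarily small by the same device as in the estimate (\ref{Collar}) from the proof of Theorem \ref{Hardy}: combine the reverse H\"older inequality Theorem \ref{RH} with the volume control (\ref{Integral}), choosing the exponents so that $q'<2-\epsilon$, and then send $\tau\to 0$. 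When $p\geq 1$, Lemma \ref{FI} yields $\nontan u\in L^{p'}(\partial\Omega)$, so H\"older's inequality applied to each piece, together with dominated convergence and the non-tangential convergence of $u$ and $\nabla u$, delivers the required vanishing directly.

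The hard part is the regime $p<1$, where with $q$ from Lemma \ref{FI} one has $1/p+1/q = 2/p-1 > 1$, so the naive H\"older pairing of $\partial u/\partial\rho\in L^p$ against $u\in L^q$ is simply not available, and the only meaning of $f_N=0$ is the distributional one in (\ref{HPBV}), which requires test functions in $C^\alpha _D(\bar\Omega)$ with $\alpha = 1/p-1$. The strategy is to upgrade the regularity of $u$ itself: combining the $L^q$-bound on $\nontan u$ from Lemma \ref{FI} with a Campanato/oscillation estimate for $u$ derived from the reverse H\"older inequality (Theorem \ref{RH}), the boundary Sobolev--Poincar\'e inequality (\ref{BoPo}), and the scale-invariant conditions (\ref{BIntegral})--(\ref{Integral}), one shows that $u$ has boundary H\"older continuity of order $\alpha$; since $u$ also vanishes on $D$, this places $u$ in $C^\alpha _D(\bar\Omega)$. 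The threshold $p_1 = 1/(1+\gamma)$ with $\gamma$ the H\"older exponent from (\ref{GDecay}) is chosen precisely so that $\alpha<\gamma$, which is what makes the regularity upgrade quantitatively feasible. With $u$ admissible as a test function, (\ref{HPBV}) applied to the zero functional $f_N$ against $\phi=u$ makes the $N$-contribution to the boundary integral vanish, and the $D$-contribution is handled as above. Producing this regularity upgrade uniformly in $k$ and matching it to the collar argument is where I expect most of the technical difficulty to lie.
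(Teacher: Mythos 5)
Your plan is a genuinely different route from the paper's. The paper argues by \emph{duality}: it never tries to establish regularity for the unknown solution $u$, but instead pairs $u$ with a good auxiliary solution $v$ of (\ref{WMP}) having atomic traction data $a$. The Green identity $\int_{\partial\Omega_k}v^\beta(\partial u/\partial\rho)^\beta - u^\beta(\partial v/\partial\rho)^\beta\,d\sigma = 0$ is used to show $\int_N a^\beta u^\beta\,d\sigma = 0$ for every atom $a$, hence $u=0$ on $N$; with $u=0$ on $D$ as well, the $L^q$-Dirichlet uniqueness of \cite{DKV:1988} finishes the job (and this is why $p_1\geq 2/3$ appears: one needs $q=p/(1-p)\geq 2$). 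Crucially, the H\"older continuity from (\ref{GDecay}) and Theorem \ref{Hardy} is applied to $v$, a solution with compactly supported, bounded traction data, not to $u$. Your proposal instead pairs $u$ against itself and tries to send the boundary term to zero directly.

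The gap is the regularity upgrade for $u$ that your $p<1$ argument depends on. You need $u\in C^\alpha_D(\bar\Omega)$ with $\alpha = 1/p - 1$ in order to test (\ref{HPBV}) against $\phi = u$, and you assert this can be deduced from Lemma \ref{FI}, Theorem \ref{RH}, (\ref{BoPo}), and (\ref{BIntegral})--(\ref{Integral}). But the hypotheses of Theorem \ref{Uniq} give only $\nontan{(\nabla u)}\in L^p(\partial\Omega)$ with $p<1$, which through Lemma \ref{FI} yields $\nontan u\in L^q(\partial\Omega)$ for a finite $q$, not boundary H\"older continuity. Theorem \ref{RH} and the estimates built on it are proved for \emph{weak solutions} of (\ref{WMP}), i.e.\ for $u\in\sobolev 2 1_D(\Omega)$, and the $u$ in the uniqueness theorem is not known to be in that space: the Dindo\v{s}--Mitrea bound (\ref{DiMi}) gives only $\nabla u\in L^{2p}(\Omega)$ with $2p<2$. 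The boundary H\"older estimate (\ref{GDecay}) from \cite{TKB:2012} likewise requires the Green-function framework, hence an $L^2$-energy solution; and for systems there is no de Giorgi--Nash--Moser theory to fall back on. So the Campanato/oscillation step is exactly the missing link, and it is not available from what is established in the paper. (There is a secondary issue for the $p\geq 1$ regime: Lemma \ref{FI} is stated and proved only for $p<1$; the paper simply observes that uniqueness for $p<1$ implies uniqueness for larger $p$, since $\nontan{(\nabla u)}\in L^p(\partial\Omega)$ for $p\geq 1$ implies the same for any smaller exponent on a bounded boundary.) The duality argument the paper uses is precisely designed to avoid needing any boundary regularity for $u$ beyond $\nontan u\in L^q$.
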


\begin{proof}  It suffices to prove the Theorem for $p < 1$. We let
  $u$ be a solution of the mixed problem (\ref{MP}) with $ f_N=0$ and $f_D=0$. 
Our argument will proceed by duality and requires the existence of a
solution  when the traction  data is an atom for  $N$ as  given in
Theorem \ref{Hardy}. 
We let $a$ be an atom for $N$ and claim that 
\begin{equation}
\label{Uniq1}
\int _ {N } a^\beta u ^ \beta  \, d\sigma = 0 .
\end{equation}
We accept the claim and complete the proof. We first  observe that $ \nontan {( \nabla u ) } \in
L ^ p ( \partial \Omega )$ and thus  Lemma \ref{FI} implies  that $\nontan u
\in L^ q ( \partial  \Omega )$ with $ 1 / q = 1/ p -1 $. Also, it is
easy to see that if  $\nontan { ( \nabla  u ) } \in L ^ p ( \partial \Omega
 )$  then $u$ has non-tangential limits a.e. The claim
implies that these limits are zero.   From the work of
\cite{DKV:1988},  we have
uniqueness for  the  $L^q$-Dirichlet problem for $q \geq 2$ and any Lipschitz
domain and thus we have $ u=0$ if $ q\geq 2$.  
Since we are in two dimensions, we have  $q \geq 2 $ if $ p \geq 2/3$, thus we will require that $ p_ 1
\geq 2/3$.

To establish the claim (\ref{Uniq1}), we let $v$ be the solution of
(\ref{MP}) with data $ f_N =a$ and $ f_D =0$. From Theorem \ref{Hardy}
 we
have $ \nontan {( \nabla v )} \in L^ t ( \partial \Omega )$ for $ t <
p_0$.   Lemma \ref{FI} implies that $\nontan v \in L^
q ( \partial \Omega )$ for $ 1/q = 1/ t -1$. Thus if $ 1 / p _0 + 1 /
p _1 \leq 2$, then we may find $t$ so that $ 1/t + 1/p \leq 1$  and
use the dominated convergence theorem to obtain
$$
\lim _ { k \rightarrow \infty } \int _ { \partial \Omega _ k } u ^
\beta ( \frac
     { \partial v } { \partial \rho } ) ^ \beta \, d\sigma  = \int _ { N } u ^
     \beta a ^ \beta \, d\sigma .
$$
The estimates for the Green function in section 4 of \cite{TKB:2012}
imply  that there is an exponent $ \gamma$ so that the
solution $v$ lies in $ C^ \alpha _D( \partial 
\Omega)$ for $ \alpha \leq \gamma$. If $ \alpha \geq  1/ p _ 1 -1 $,
then our assumption that  
$ \partial u / \partial \rho = 0 $ in $H^ p ( N)$ implies that 
$$
\lim _ { k \rightarrow \infty } \int _ { \partial \Omega _ k } v ^
\beta ( \frac
     { \partial u } { \partial \rho } ) ^ \beta \, d\sigma = 0. 
$$
While the Green identity gives  for every $k$ that 
$$ 
  \int _ { \partial \Omega _ k } v ^ \beta ( \frac { \partial u } {
  \partial \rho } ) ^ \beta - u ^ \beta ( \frac { \partial v } {
  \partial \rho } ) ^ \beta \, d\sigma =0.
$$
The last three displayed equations imply the claim (\ref{Uniq1}). 

To summarize the conditions on $p_1$, we need $ p _ 1\geq 2/3$, $p _ 1
\geq p_ 0 / ( 2p_ 0-1)$ and $ p_ 1 \geq  1 / ( 1+ \gamma )$ to establish uniqueness. 
\end{proof}

\section{Existence of solutions} 
\label{Exist}

In this section, we give the details needed to establish the existence
of solutions for the $ L^ p$-mixed problem for $p$ in
an interval containing 1. When $ p \leq 1$, we take our data from
Hardy spaces and for $p > 1$, the data is taken from $L^p$ spaces. 
Given the estimates of Theorem \ref{Hardy}, the argument is not so
different from results in previous work of the authors \cite{RB:1995a}
and \cite{OB:2009}.

\begin{theorem} 
\label{pEx}
Let $ \Omega \subset \reals ^ 2 $  be a Lipschitz domain, suppose that
$D$ satisfies (\ref{Corkscrew}), (\ref{BIntegral}), and
(\ref{Integral}), and  that $L$ is the  Lam\'e operator 
with coefficients 
satisfying the ellipticity condition (\ref{Elliptic}). 

We may find an exponent $ p_1$ so that if $ 1 \geq p > p_1$, the
$L^p$-mixed problem has  a solution. 
This means that if $ f _N \in H^ p (N)$ and $ f_D\in H^ { 1, p } (
\partial \Omega)$, then we have a
solution to the $L^ p  $-mixed problem (\ref{MP}) which
satisfies 
$$
\| u \| _ { H ^ { 1,p}( \partial \Omega) } +  \| \frac { \partial u }
   { \partial \rho } \| _ { H^ p ( \partial 
   \Omega) }   + 
\| \nontan{( \nabla u )} \| _ { L^ p ( \partial \Omega )} 
\leq  C ( \| f _ N \| _ { H ^ p (N) } + \| f_D \| _ { H ^ { 1,p }( \partial \Omega)} ).
$$
The normal derivative of $u$ exists in the sense of  (\ref{HPBV}), 
$u$ has non-tangential limits a.e. on the boundary,  and these limits
vanish on 
$D$. 
\end{theorem}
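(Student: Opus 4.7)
The plan is to reduce to the case of zero Dirichlet data via the regularity problem, decompose the traction data into $H^p(N)$-atoms, and solve atom by atom using the estimates of Theorem \ref{Hardy}. Given $f_D \in H^{1,p}(\partial\Omega)$, the results of Appendix \ref{RegularityProblem} supply a solution $w$ of $Lw = 0$ in $\Omega$ whose non-tangential trace equals $f_D$ on $\partial\Omega$ and which satisfies
\[
\|w\|_{H^{1,p}(\partial\Omega)} + \|\partial w/\partial\rho\|_{H^p(\partial\Omega)} + \|\nontan{(\nabla w)}\|_{L^p(\partial\Omega)} \leq C\|f_D\|_{H^{1,p}(\partial\Omega)}.
\]
Writing $u = v + w$, the problem reduces to constructing $v$ with $Lv = 0$, $v = 0$ on $D$ in the trace sense, and traction $g := f_N - (\partial w/\partial\rho)|_N \in H^p(N)$ on $N$, satisfying the three Hardy-type bounds.

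For the reduced problem, decompose $g = \sum_j \lambda_j a_j$ with $a_j$ an $H^p$-atom for $N$ supported in a boundary ball $\Delta_j = \sball{x_j}{r_j}$, and $\sum_j \lambda_j^p \sigma(\Delta_j)^{1-p} \leq 2\|g\|_{H^p(N)}^p$. Since each $a_j \in L^2(N)$, the Hilbert-space existence theory for (\ref{WMP}) provides a unique weak solution $v_j \in \sobolev{2}{1}_D(\Omega)$ with traction $a_j$ and zero Dirichlet data. Theorem \ref{Hardy} then gives the dyadic decay (\ref{HardyEstimate}) on the annuli $\Sigma_k$, which sums to $\|\nontan{(\nabla v_j)}\|_{L^p(\partial\Omega)}^p \leq C\sigma(\Delta_j)^{1-p}$ for any $p > 1/(1+\gamma)$, and ensures that $\nabla v_j$ has non-tangential limits a.e.

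Set $v = \sum_j \lambda_j v_j$. For $0 < p \leq 1$ the sub-additivity $(\sum x_j)^p \leq \sum x_j^p$ yields
\[
\|\nontan{(\nabla v)}\|_{L^p(\partial\Omega)}^p \leq \sum_j \lambda_j^p \|\nontan{(\nabla v_j)}\|_{L^p(\partial\Omega)}^p \leq C\|g\|_{H^p(N)}^p,
\]
so the series converges and the non-tangential maximal function satisfies the claimed bound. The $H^{1,p}$-bound on the boundary trace and the $H^p$-bound on the traction follow by recognizing each $v_j$ as producing, up to a constant, a molecule for the respective Hardy space adapted to $\Delta_j$: the size, cancellation (via the divergence-theorem identities (\ref{MZ1})--(\ref{MZ2})), and decay rate $2^{-k\gamma}$ are all recorded in Theorem \ref{Hardy}. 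Standard Coifman--Weiss molecular theory bounds each molecular quasi-norm by $C\sigma(\Delta_j)^{1/p-1}$, and the same $\ell^p$ summation as above controls the whole series.

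The main obstacle is to verify that $\partial v/\partial\rho$ equals $g$ in the dual sense of (\ref{HPBV}), since for $p<1$ the trace is only a distribution and a.e.\ non-tangential convergence is unavailable. The plan is to work on the Verchota approximating subdomains $\Omega_m$, where the classical Green identity applies, and to pair $\partial v/\partial\rho$ against test functions $\phi \in C^\alpha_D(\partial\Omega)$ with $\alpha \geq 1/p - 1$. Passage to the limit $m \to \infty$ is carried out atom by atom, controlling tails via the $2^{-k\gamma}$ decay of $\nabla v_j$ together with the H\"older regularity of $\phi$, exactly as in the uniqueness argument of Theorem \ref{Uniq}. The same approximation shows that $u = v + w$ has a.e.\ non-tangential boundary limits with $u = f_D$ on $D$, completing the proof.
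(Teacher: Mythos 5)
Your proposal follows the paper's argument essentially step for step: reduce to $f_D=0$ via the Hardy--Sobolev regularity result of Appendix~\ref{RegularityProblem}, decompose the traction data into atoms, apply Theorem~\ref{Hardy} to each atomic solution, sum with the $\ell^p$ inequality, use the molecular characterization (via (\ref{HardyEstimate}), (\ref{MZ1})--(\ref{MZ2}), and Coifman--Weiss) for the $H^p$ and $H^{1,p}$ bounds, and verify the traction boundary condition in the sense of (\ref{HPBV}) by passing to the limit over the Verchota subdomains. The only point you gloss over is that before invoking sub-additivity on $\nontan{(\nabla v)}$ you should first observe that the partial sums $\sum_{j\le M}\lambda_j v_j$ converge uniformly on compact subsets of $\Omega$ (the paper gets this from Lemma~\ref{FI} and the interior estimate (\ref{FI1})), so that $v$ is a bona fide solution of $Lv=0$ and the exchange of sup and sum is justified.
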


\begin{proof}
By the results of Appendix \ref{RegularityProblem}, we may assume that $f_D=0$. 
Suppose that $f_N$ lies in $H ^ p (N)$ and choose a representation
of $ f_N = \sum _ j \lambda _ j a _j$ with the atom  $ a _ j$ supported in
$\sball  { x _ j } { r _ j } $ and $ \sum _ j \lambda _ j ^ p \sigma ( \sball {
  x_ j } {r_j } ) ^ { 1- p }  \leq 2 \| f _ N \| _ { H ^p (N)}$. We
let $ v_ j $ be the solution of (\ref{WMP}) with the traction data
$f_N$ an atom  $a_j$ 
and $f=0$ and set $ u _M = \sum _ { j = 1} ^ M\lambda _j v _j$. From
Theorem \ref{Hardy},  we have 
$$
\| \nontan { \nabla ( u_M- u _ { M' } )} \| ^p_ { L^ p ( \partial
    \Omega)} \leq C \sum _ { j = M +1 } ^ { M ' } \lambda _ j^p \sigma (
  \sball { x_ j } { r_j } ) ^ { 1- p  } , \qquad M'  >  M. 
$$
From Lemma \ref{FI}, we have that $ \nontan { u_M } \in L^ q (
\partial \Omega)$ for $ 1/q = 1/ p -1$.  Thus, the estimate of
(\ref{FI1}) implies that $u _M$ converges uniformly to a  function $u$
on compact subsets of $ \Omega$. Furthermore, we have that $u$ is a
solution of $L u = 0$ in $ \Omega$, $ \| \nontan{(\nabla u ) } \| _ {
  L^ p ( \partial \Omega)} \leq C \| f _N \| _ { H ^ p (N)}$, and $
\nabla u $ has non-tangential limits a.e. on $\partial \Omega$. 
To find $ \partial u /\partial \rho$ on $ \partial \Omega$, 
we first note that as in (\ref{MZ1}), we have  that for any $j$ and
$k$, 
$$
\int _ { \partial \Omega } \frac { \partial v_j }{ \partial \rho } \,
d\sigma  
= 0 \qquad \mbox{and} \qquad\int _ { \partial \Omega _k } \frac { \partial v _j } {\partial \rho }
\,d \sigma = 0 .
$$
The estimates  (\ref{HardyEstimate}) of Theorem  \ref{Hardy} imply
that 
\begin{equation}
\label{pEx1}
\|\frac  { \partial v _ j } {\partial \rho } \| _ { H ^ p ( \partial
  \Omega  )} + \sup _ k  \| \frac { \partial v _ j } { \partial \rho } \| _ {
  H^ p ( \partial \Omega  _k ) }  \leq C.
\end{equation}
To establish (\ref{pEx1}), we observe that 
the size estimates (\ref{HardyEstimate}) and (\ref{MZ1}) imply that
$ \partial v_j /\partial
\rho$ 
  is a molecule as in  \cite{CW:1976}  and thus can be decomposed
into  atoms. Here, we use that the atomic Hardy spaces may be  defined
with atoms from any  $L^t$ space, $t >1$, and the space we obtain is
independent of $t$. See Coifman and Weiss \cite{CW:1976} or
Mitrea and Wright \cite[sections 2.2-2.3]{MW:2011} for more information. Thus 
$
\sum _ { j =1 } ^ \infty \lambda _j  \partial v_j / \partial
  \rho
$
defines an element of the atomic Hardy space $H^ p ( \partial \Omega)$
as long  as  $ 1/ ( 1 + \gamma ) < p \leq 1$ with $ \gamma $ as in
(\ref{HardyEstimate}).   It is also straightforward 
to  see that the estimate (\ref{HardyEstimate}) implies that   $ \sum _ { j
  =1 } ^ \infty \lambda _j v _j$ defines an 
element of the Hardy-Sobolev  space $H^ { 1,p } ( \partial \Omega)$. 
Now we assume $ p < 1$, fix $\phi \in C ^ \alpha ( \bar \Omega)$
with $ \alpha = \frac 1 p - 1$,  and consider the limit
\begin{equation*}
\begin{split} 
& \left | \lim _ { k \rightarrow \infty } \int _ { \partial \Omega _k }
\frac { \partial u }{ \partial \rho } \phi \, d\sigma 
- \sum _ { j = 1} ^ \infty \lambda _j \int _ { \partial \Omega } \frac
{ \partial v _j }{ \partial \rho } \phi \, d\sigma \right | \\
&\qquad \qquad \leq \limsup _ { k \rightarrow \infty }  \sum _ { j =1 } ^ M
 \lambda _ j  \left |  \int _ { \partial \Omega _ k } \frac { \partial v _ j }{
  \partial \rho } \phi \, d\sigma 
- \int _ { \partial \Omega  } \frac { \partial v _ j }{
  \partial \rho } \phi \, d\sigma   \right | 
\\
& \qquad \qquad \qquad + \sup _ k \sum _ { j = M+ 1} ^ \infty \lambda _j( | \int _ { \partial
  \Omega_k  } \frac { \partial v _ j }{ \partial \rho } \phi \, d\sigma  | +
| \int _ { \partial
  \Omega  } \frac { \partial v _ j }{ \partial \rho } \phi \, d\sigma  | ).
\end{split}
\end{equation*}
Using the estimate  (\ref{HardyEstimate}) for the non-tangential
maximal function of $ \nabla 
v _j $   and the dominated convergence theorem, we  see that the
first term on 
 the right of this inequality is zero for any $M$. 
Since functions in $ C^ \alpha $, $ \alpha = 1/p -1$ give rise to
elements of  the dual of $H^p$, 
the second
and third terms are small when $M$ is large by our choice of  $
\lambda _j$ and the estimate for $ \partial v_j / \partial \rho$ in 
 (\ref{pEx1}). Hence we have that $ \partial u / \partial \rho $
exists in the sense of (\ref{HPBV}) and is given by $ \sum \lambda _j
\partial v_j / \partial \rho$ and we have the estimate  for $ \partial
u / \partial \rho$ in the estimate of the Theorem. Note that 
we also obtain  the estimate when $p=1$. 
 Finally, if we restrict $
\phi$ to lie in $ C^ \alpha _D ( \bar  \Omega )$, we have that $
\partial v _j / \partial \rho  = a_j$ on the set $N$ and thus we have
$ \partial u / \partial \rho = f _N$. 
\end{proof}

Finally, we  observe that a real-variable argument of Caffarelli and Peral
\cite{MR1486629} and Shen \cite{ZS:2007} gives existence for $1 < p <
p _0$. The argument to obtain this result is identical to that used in
our study of the Laplacian in our earlier work with Taylor
\cite{OB:2009,TOB:2011}.

\begin{theorem} 
\label{LpEx}
Let $ \Omega$ be a Lipschitz domain and let  $D \subset \partial
\Omega$  be  a
non-empty proper open subset  of $ \partial \Omega$ which satisfies
(\ref{Corkscrew}), (\ref{BIntegral}), and (\ref{Integral}).  Suppose
that $ 1< p< p_0$ with $p_0$ as in Theorem \ref{Hardy}. We assume
that the coefficients of the operator $L$ satisfy (\ref{Elliptic}). 
Let $ f _N \in L^ p ( N)$ and $ f_D =0$, then we may find a solution of the
$L^p$-mixed problem (\ref{MP}) which satisfies 
$$
\| \nontan { ( \nabla u )} \|_ { L ^ p ( \partial \Omega)} \leq C \| f
_ N \| _ { L ^ p ( N)}.
$$
\end{theorem}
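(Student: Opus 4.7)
The plan is to follow the real-variable scheme of Shen \cite{ZS:2007} and Caffarelli--Peral \cite{MR1486629}, as carried out by the authors and Taylor for the Laplacian in \cite{OB:2009,TOB:2011}. Two ingredients are already in place. First, Theorem \ref{pEx} (specialized to $p=1$) supplies the atomic endpoint: for any solution $u$ whose traction data is an atom for $N$, $\|\nontan{(\nabla u)}\|_{L^1(\partial\Omega)}\leq C$ uniformly in the atom. Second, the interior reverse H\"older inequality of Theorem \ref{RH} and its boundary upgrade Lemma \ref{Boundary} give self-improvement of integrability for $\nontan{(\nabla u)_{c\delta_r}}$ at any exponent $q<p_0=q_0(1-\epsilon)/(2-\epsilon)$.

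First I would fix $f_N\in L^\infty(N)\cap L^p(N)$, a dense subspace of $L^p(N)$, and produce the weak energy solution $u\in\sobolev{2}{1}_D(\Omega)$ of (\ref{WMP}). By Lemma \ref{LocalSolve}, $\nontan{(\nabla u)}$ is locally square-integrable away from $\Lambda$, so the task reduces to the a priori estimate
$$
\|\nontan{(\nabla u)}\|_{L^p(\partial\Omega)}\leq C\|f_N\|_{L^p(N)}
$$
with a constant independent of $\|f_N\|_{L^\infty}$. Once this holds, density extends $f_N\mapsto u$ continuously to all of $L^p(N)$, and uniqueness follows from Theorem \ref{Uniq}.

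The core of the argument is Shen's localization. Given a boundary ball $\sball x r$ with $r<r_0/20$, split $f_N=f_{N,1}+f_{N,2}$ with $f_{N,1}=f_N\chi_{\sball x {10r}}$, and decompose $u=u_1+u_2$ by solving (\ref{WMP}) for each piece. For $u_1$, after subtracting the mean and rescaling, $f_{N,1}$ is a bounded multiple of an atom for $N$ of size controlled by $\sigma(\sball x{10r})^{1/p-1}\|f_N\|_{L^p(\sball x{10r})}$; Theorem \ref{Hardy} together with the (harmless) handling of the constant piece then yields
$$
\average_{\sball x r}\nontan{(\nabla u_1)}\,d\sigma\leq C\left(\average_{\sball x {10r}}|f_N|^p\,d\sigma\right)^{1/p}.
$$
For $u_2$, the traction data vanishes on $\sball x {10r}$, so Lemma \ref{Boundary} gives, for any $1<p<q<p_0$,
$$
\left(\average_{\sball x r}\nontan{(\nabla u_2)_{c\delta_r}}^q\,d\sigma\right)^{1/q}\leq C\average_{\sball x {5r}}\nontan{(\nabla u_2)}\,d\sigma,
$$
while the contribution of $\nontan{(\nabla u_2)}$ beyond the truncation is dominated via the Caccioppoli inequality and the interior H\"older decay used in the proof of Lemma \ref{Decay}. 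Shen's real-variable lemma, in the precise form used in \cite{OB:2009,TOB:2011}, then interpolates between these two estimates and produces the desired $L^p$ bound for $\nontan{(\nabla u)}$ throughout $1<p<p_0$.

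The principal obstacle is not the interpolation machinery, which is by now standard, but the transfer of the reverse H\"older inequality from $\nabla u$ in the interior (Theorem \ref{RH}) to the truncated non-tangential maximal function on the boundary (Lemma \ref{Boundary}); this is exactly where the two-dimensional geometry of $D$, encoded in (\ref{BIntegral}) and (\ref{Integral}), is indispensable. With Lemma \ref{Boundary} available in an $L^q$ range strictly larger than the target $L^p$, and with Theorem \ref{Hardy} supplying the $L^1$ endpoint, the remainder of the argument carries over verbatim from the Laplacian case.
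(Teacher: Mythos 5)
Your outline matches the paper's intent: the paper devotes only two sentences to this theorem, asserting that the Caffarelli--Peral/Shen real-variable argument runs ``identically'' to the Laplacian case in \cite{OB:2009,TOB:2011}, and records the local estimate of Lemma~\ref{NT} as the key ingredient. Your decomposition $u=u_1+u_2$, with the atomic estimate (Theorem~\ref{Hardy}) controlling the local piece and Lemma~\ref{Boundary} plus the reverse H\"older inequality controlling the far piece, is precisely the mechanism behind that claim, so your proposal is essentially the same argument fleshed out.

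One detail in your treatment of $u_1$ is glossed over a bit too quickly. You renormalize $f_{N,1}=f_N\chi_{\Delta_{10r}(x)}$ into an atom, but a genuine $L^\infty$-atom for $N$ has to be controlled by $\|f_N\|_{L^\infty(\Delta_{10r}(x))}$, not by the $L^p$ average you write on the right-hand side. To land on
$$
\average_{\sball x r}\nontan{(\nabla u_1)}\,d\sigma\leq C\left(\average_{\sball x {10r}}|f_N|^p\,d\sigma\right)^{1/p}
$$
you need the atomic estimate for $L^t$-atoms with $1<t\leq p$, and thus the observation (stated in Section~\ref{Functions} and used in the proof of Theorem~\ref{pEx}) that the Hardy space and the atomic estimate do not change when $L^\infty$-atoms are replaced by $L^t$-atoms. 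The paper's formulation of Lemma~\ref{NT} sidesteps this entirely by phrasing the local bound directly in terms of $(\average|f_N|^p)^{1/p}$, without an explicit pass through the atomic decomposition. Also, in the mean-subtraction step, the constant you subtract must be added back as an indicator times a constant supported in $\Delta_{10r}(x)\cap N$, and that correction term is bounded in $L^\infty$ by Jensen against $(\average|f_N|^p)^{1/p}$; calling it ``harmless'' is fine, but it is not zero and needs to be carried along. None of this is a genuine gap, but the exposition should make these two points explicit before invoking Shen's lemma.
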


The following lemma is a key estimate 
 that is needed to carry out the method of Shen \cite{ZS:2007}. 
This result  may be proved using the techniques in the proof of Theorem
\ref{Hardy}. See also the argument in section 6 of our previous work
\cite{OB:2009}. 

\begin{lemma}
\label{NT}  
Let $ \Omega$, $D$, and $L$ be as in Theorem \ref{LpEx}. 
Suppose that $ 1 <p < p _0 = q_0 (1-\epsilon) / ( 2- \epsilon)$. 

If $u$ is a weak solution of (\ref{WMP}) with data $ f_N   $ in $ L^ p
( N)$ and $f=0$, we have the following local estimate 
for $ 1<  p < p _ 0 $,
\begin{equation}
\label{NT1} 
\left ( \average _ {\sball x r }  { \nontan {( \nabla u ) _ {r} }}^ p \,
d\sigma \right ) ^ { 1/ p }  \leq C ( \average _ { \locdom x { 4r} }
|\nabla u | \, dy + ( \average   _ { \sball x { 2 r} \cap N }
|f_N | ^ p \, d\sigma ) ^ { 1/p } ) .
\end{equation}
The constant $C$ depends only on $M$ and the exponent $p$. 
\end{lemma}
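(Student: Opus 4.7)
My approach is to adapt the proof of Lemma \ref{Boundary}, with two modifications: the replacement of the $L^\infty$ bound on $f_N$ by an $L^p$-average, and the extension of the truncation scale from $c\delta_r$ to $r$. I would start by applying Lemma \ref{Whitney} on the boundary ball $\sball x r$ to obtain a weighted $L^2$ bound for $\nontan{(\nabla u)_{c\delta_r}}$, controlled by $\int_{\sball x {2r}} |f_N|^2 \delta_r^{1-\rho}\, d\sigma$ and $\int_{\locdom x {2r}} |\nabla u|^2 \delta_r^{-\rho}\, dy$, for a parameter $\rho$ to be chosen.

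Parameters $\rho$ and $q$ would then be chosen, as in the proof of Lemma \ref{Boundary}, to satisfy $(2-\epsilon)(1-2/q) > \rho > (2-\epsilon) - 2(1-\epsilon)/p$ and $q < q_0$, plus the additional requirement $q \leq 2p$; all four conditions can be met simultaneously precisely because $1 < p < p_0 = q_0(1-\epsilon)/(2-\epsilon)$. H\"older's inequality with exponents $2/p$ and $2/(2-p)$, together with (\ref{BIntegral}) used to control an integral of a power of $\delta_r$, converts the weighted $L^2$ bound into an unweighted $L^p$-average bound for $\nontan{(\nabla u)_{c\delta_r}}$. The gradient term is handled exactly as in Lemma \ref{Boundary}: H\"older with exponents $2/q$ and $q/(q-2)$, the bound (\ref{Integral}), and the reverse H\"older inequality (\ref{RHI}) reduce it to $\average_{\locdom x {4r}} |\nabla u|\, dy$ plus an $L^{q/2}$-average of $|f_N|$ on $\sball x {4r}$. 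Since $q/2 \leq p$, a further H\"older step converts this into the desired $L^p$-average. The corresponding step for the direct boundary $f_N$ term uses H\"older and (\ref{BIntegral}) to control an appropriate power of $\delta_r$.

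The main obstacle is the last step: lifting the truncation scale from $c\delta_r$ to $r$ so that $\nontan{(\nabla u)_r}$ itself appears on the left-hand side of (\ref{NT1}). For $y \in \Gamma(x) \cap \ball x r$ with $|y-x| > c\delta_r(x)$ I would split into two subcases. If $d(y) \geq c r$, then interior regularity together with the Caccioppoli inequality (\ref{Cacc}) and the reverse H\"older inequality gives a pointwise bound $|\nabla u(y)| \leq C \average_{\locdom x {4r}} |\nabla u|\, dz$. If $d(y)$ is small, so that $y$ is close to $\Lambda$, then one covers the portion of $\sball x r$ within distance comparable to $\delta_r$ of $\Lambda$ by a Vitali-type family of boundary balls $\sball {x_j} {s_j}$ with $s_j \approx \delta_r(x_j)$, applies the estimate already obtained on each (where the two truncation scales are comparable), and sums using the bounded-overlap property and (\ref{BIntegral}). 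This covering argument parallels section~6 of \cite{OB:2009} for the Laplacian; its extension to the Lam\'e system requires no new input, since all the local tools (Lemma \ref{LocalSolve}, Lemma \ref{Whitney}, and Theorem \ref{RH}) are already available in the vector-valued setting.
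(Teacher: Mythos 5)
Your proposal takes a genuinely different route from what the paper indicates: the paper points to the representation-formula techniques of the proof of Theorem~\ref{Hardy} and to section~6 of \cite{OB:2009}, whereas you try to re-run the weighted-$L^2$ Whitney argument of Lemmas~\ref{Whitney} and~\ref{Boundary} and then convert to $L^p$ of $f_N$.  That conversion is exactly where the argument breaks, and the break is not a detail -- the four parameter constraints you list are mutually incompatible.

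To see this, recall that the window for $\rho$ in Lemma~\ref{Boundary} is nonempty only when
$(2-\epsilon)(1-2/q) > (2-\epsilon) - 2(1-\epsilon)/p$, which rearranges to $q > p(2-\epsilon)/(1-\epsilon)$; this is precisely why the paper imposes $p < q(1-\epsilon)/(2-\epsilon)$ there.  Your extra requirement $q \le 2p$ (needed so that the $L^{q(n-1)/n} = L^{q/2}$ average of $f_N$ coming out of the reverse H\"older inequality~(\ref{RHI}) can be dominated by the $L^p$ average) would force
$p(2-\epsilon)/(1-\epsilon) < q \le 2p$, i.e.\ $(2-\epsilon)/(1-\epsilon) < 2$, which is false for every $\epsilon > 0$.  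So the claim that ``all four conditions can be met simultaneously precisely because $1 < p < p_0$'' is incorrect; the window for $q$ is empty in the entire range of $\epsilon$ to which the paper applies.  The same criticality kills the ``direct'' $f_N$ term from Lemma~\ref{Whitney}: if $1 < p < 2$, H\"older cannot pass from $(\int |f_N|^2\delta_r^{1-\rho})^{1/2}$ to $(\int|f_N|^p)^{1/p}$ at all, and if $p \ge 2$ the condition~(\ref{BIntegral}) needs $\rho < (2-\epsilon) - 2(1-\epsilon)/p$, which is exactly the \emph{lower} bound on $\rho$, so the window is again empty.  The upshot is that the weighted-$L^2$ Whitney machinery, as calibrated in Lemmas~\ref{Whitney}--\ref{Boundary}, naturally produces an $L^{q/2}$ average of $f_N$ with $q/2$ strictly larger than $p$, and no choice of $\rho$ or $q$ in that framework brings it down to $L^p$.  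That is why Lemma~\ref{Boundary} is stated with $\|f_N\|_{L^\infty}$ on the right-hand side: the $L^\infty$ norm swallows any $L^s$ average, so the conflict is invisible there, but it reappears the moment you try to replace $L^\infty$ by $L^p$.

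The last step of your outline (passing from the truncation scale $c\delta_r$ to $r$ by an interior estimate far from $\partial\Omega$ and a Vitali covering near it) is reasonable in spirit and roughly matches the kind of covering argument used elsewhere in the paper, but it does not rescue the argument, since the gap lies upstream.  To prove~(\ref{NT1}) one needs a tool that yields $L^p$ control of the data term directly -- either an $L^p$ version of Lemma~\ref{LocalSolve} (i.e.\ local $L^p$ traction and regularity estimates for the Lam\'e system on the Whitney pieces, which would replace the $L^2$ input and avoid the Jensen obstruction on the $f_N$ term) or, as the paper suggests, the representation-formula and Coifman--McIntosh--Meyer approach used in the proof of Theorem~\ref{Hardy}, where the non-tangential maximal function of $\nabla u$ is bounded directly by a singular integral acting on boundary data in $L^p$.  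Your Whitney-based route, as written, cannot close.
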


\note{
\begin{tabular}{ll}
$d$ & distance to boundary\\
$\delta$ & distance to $\Lambda$ \\
$q_0$ & upper bound for integrability in $ \partial \Omega$\\
\end{tabular}
}

\appendix
\renewcommand*{\thesection}{\Alph{section}}
\renewcommand{\theequation}{\Alph{section}.\arabic{equation}}

\section{Sobolev inequalities}
\label{Inequalities}

In this appendix, we establish the estimates (\ref{SoPo2}) and
(\ref{BoPo}) 
 that  were used   in the study of the mixed problem. 
These results may be found in  our earlier work 
\cite{OB:2009,TKB:2012,TOB:2011}, though  we 
do not claim originality. The exposition below serves to collect these
results in one place and includes an occasional endpoint that was
missed in our earlier work. 

Let $\phi$ be a Lipschitz function on the unit sphere so that $ \Omega = \{ y : |y | < r \phi(
y / |y |) \}$ is a star-shaped  Lipschitz domain  of scale $r$ and
centered at 0.  
We  define a 
bi-Lipschitz map $ \Phi : \ball 0  1  \rightarrow \Omega$ by 
\begin{equation}
\label{BiL}
\Phi(y ) =  r \phi(y/|y|) y .
\end{equation}
Using the argument in \cite[Lemma 7.16]{GT:1983} and a change of
variables, we may find a constant $C= C(N,n,p)$ so that for $ S
\subset \Omega$, a set of positive Lebesgue measure and $ 1 \leq p < \infty$, we have
\begin{equation}
\label{Poincare2} 
\| u -  \average _S u \, dy \|_ { L^ p ( \Omega) } 
\leq \frac  { C(M,n,p) r^ { n+1} } { |S| } \| \nabla u \|_ { L^ p ( \Omega)}.
\end{equation}
The only change that is
needed from the  standard argument is to average $u$ with respect to
the weight given by the Jacobian of the change of variables. 

Next we observe that for $ \Omega$ a star-shaped convex domain with
constant $M$ and scale $r$, $ 1 \leq p < n$,  $q$ defined by  $1/q = 1/p - 1/n $, and $S$ a measurable subset of $  \Omega$,  we have
the Sobolev-Poincar\'e inequality
\begin{equation}
\label{SoPo} 
\| u - \average _S u \, dy \|_ { L^ q ( \Omega) } 
\leq \frac  { C(M,p,n) r^ { n} } { |S| } \| \nabla u \|_ { L^ p ( \Omega)}.
\end{equation}
To establish (\ref{SoPo}) we extend $u$ to $ \reals ^ n$ by a reflection in $
\partial \Omega$.  Thus choose $\eta$ a cutoff function which is one
when $ |x| \leq ( 1+ M) r $ and zero  when $ |x| > ( 2+ 2M) r$. 
We let 
$$
Eu(x) = \left \{ \begin{array}{ll} 
u(x) , \qquad &  x \in \Omega \\
 u( x r^2 \phi (x/|x|)^2/ |x|^ 2) \eta (x)   , \qquad & x \in \reals ^ n
\setminus \Omega.
\end{array}
\right. 
$$
\note{ Note that the map $ V(x) =x  r ^ 2 \phi( x/|x|)^2 /
  |x|^2 $ is locally Lipschitz, fixes the boundary of $\Omega$, the
  set $ |x| = r \phi(x/|x|) $, and satisfies $ \Phi\circ \Phi= I$. 
}
The Sobolev inequality and the  product rule in $ \reals ^ n$ 
gives 
\begin{eqnarray*}
\| u - \average _S u\, dy  \| _ { L^ q (  \Omega) }
& \leq &  \| E( u- \average_S u \, dy )\| _ { L^ q ( \reals ^ n ) }  \\
& \leq &  C( p, n ) \| \nabla E ( u - \average _S u \, dy ) \| _ { L^ p ( \reals
  ^ n ) } \\
& \leq  & C ( \| \nabla u \|_ { L^ p ( \Omega) } + r^ { -1} \| u - \average
_S u \, dy  \| _ { L^ p ( \Omega) } ) .
\end{eqnarray*}
Applying the estimate  (\ref{Poincare2}) gives the Sobolev-Poincar\'e
inequality (\ref{SoPo}).  

The following estimate was proved in \cite{OB:2009} under an
additional assumption that the  set $D$ was not too spread out. The proof
below is simpler and omits that assumption. 

\begin{lemma}
\label{Wolff} 
 If $ \Omega$ is a  star-shaped Lipschitz domain with
  scale $r$ and constant $M$, $ D $ a measurable subset of  $\partial
  \Omega$,  and  $ u \in \sobolev p 1 _D ( \Omega)$, then 
  for  $ 1 \leq p < n $ and $ q$ given by $ 1/q = 1/p - 1/n$, we have 
$$
\left ( \int _  \Omega |u |^ {q } \, dy \right ) ^ { 1/q } 
\leq C \frac { r ^ { n -1 } } { \sigma (D )} \left ( \int _ \Omega
|\nabla u | ^ p \, dy \right ) ^ { 1/p } 
$$
The constant $C$ depends on $n$, $p$, and $M$. 
\end{lemma}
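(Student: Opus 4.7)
The plan is to subtract the mean of $u$ over $\Omega$ so that the Sobolev-Poincar\'e inequality (\ref{SoPo}) applies, and then to control that mean using the vanishing of $u$ on $D$ via a trace inequality. Set $\bar u = \average_\Omega u\, dy$ and split
$$
\|u\|_{L^q(\Omega)} \leq \|u - \bar u\|_{L^q(\Omega)} + |\Omega|^{1/q}|\bar u|.
$$
Applying (\ref{SoPo}) with $S = \Omega$ (so that $r^n/|S| \approx 1$) gives $\|u - \bar u\|_{L^q(\Omega)} \leq C\|\nabla u\|_{L^p(\Omega)}$, and since $\sigma(D) \leq C r^{n-1}$ this first term is already dominated by the right-hand side of the lemma.

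For the constant piece, I would use that $u = 0$ on $D$ in the trace sense to write
$$
\sigma(D)\,|\bar u| = \left|\int_D (\bar u - u)\, d\sigma\right| \leq \|u - \bar u\|_{L^1(\partial \Omega)}.
$$
A trace inequality on the star-shaped domain (obtained from the same reflection extension used above to prove (\ref{SoPo}), combined with the standard trace theorem on $\reals^n$) yields
$$
\|u - \bar u\|_{L^1(\partial \Omega)} \leq C\left(r^{-1}\|u-\bar u\|_{L^1(\Omega)} + \|\nabla u\|_{L^1(\Omega)}\right) \leq C\|\nabla u\|_{L^1(\Omega)},
$$
where the second step uses (\ref{Poincare2}) with $p=1$ and $S = \Omega$. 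H\"older's inequality then gives $\|\nabla u\|_{L^1(\Omega)} \leq C r^{n(1-1/p)}\|\nabla u\|_{L^p(\Omega)}$, so
$$
|\Omega|^{1/q}|\bar u| \leq \frac{C\,r^{n/q + n(1-1/p)}}{\sigma(D)}\|\nabla u\|_{L^p(\Omega)} = \frac{C\,r^{n-1}}{\sigma(D)}\|\nabla u\|_{L^p(\Omega)},
$$
since $1/q + 1 - 1/p = (n-1)/n$. Combining this with the bound on $\|u - \bar u\|_{L^q(\Omega)}$ finishes the proof.

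The main subtlety is making sure the trace inequality is available with a constant that scales correctly in $r$ and depends only on $M$; I would either extract it from the reflection extension already constructed in the proof of (\ref{SoPo}) (the extension $Eu$ lies in $\sobolev 1 1 ( \reals^n)$ with compact support in a ball of radius $\approx r$, so the standard trace theorem on $\reals^n$ applies), or, equivalently, derive the $p=1$ endpoint of (\ref{BoPo}) on star-shaped domains by the same reflection argument. Everything else is bookkeeping with powers of $r$.
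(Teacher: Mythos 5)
Your argument is correct and reaches the lemma by a genuinely different route from the paper. The paper first transfers to the unit ball via the bi-Lipschitz map $\Phi$, introduces the cone $\tilde A = \{ y \in B_1(0) : y/|y| \in A \}$ over the image $A$ of $D$, and controls the average over $\tilde A$ by integrating along radial segments (the estimate (\ref{Step1})); the Sobolev--Poincar\'e inequality (\ref{SoPo}) with $S = \tilde A$ then produces the factor $|\tilde A|^{-1} \approx \sigma(A)^{-1} \approx r^{n-1}/\sigma(D)$. You instead average over all of $\Omega$, so (\ref{SoPo}) gives a clean first term, and push the dependence on $\sigma(D)$ into a boundary estimate: $\sigma(D)\,|\bar u| \leq \|u - \bar u\|_{L^1(\partial\Omega)}$ uses the vanishing on $D$, and the $L^1$ trace inequality together with (\ref{Poincare2}) at $p=1$ and H\"older recovers the missing power of $r$ (your computation $1/q + 1 - 1/p = (n-1)/n$ is correct, and $\sigma(D) \lesssim r^{n-1}$ absorbs the fluctuation term). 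Your route avoids constructing and measuring the conical set; the paper's avoids appealing to a boundary trace and needs only (\ref{SoPo}) and the radial fundamental theorem of calculus. One point to tidy: the scale-invariant trace inequality
$$
\|v\|_{L^1(\partial\Omega)} \leq C(M)\bigl(r^{-1}\|v\|_{L^1(\Omega)} + \|\nabla v\|_{L^1(\Omega)}\bigr)
$$
is not literally a consequence of ``the standard trace theorem on $\reals^n$ applied to $Ev$,'' since one still needs a trace bound on the Lipschitz surface $\partial\Omega$ itself. It is more directly obtained from the divergence theorem: integrate $\div(|v|\,y)$ over the star-shaped domain and use that $y\cdot\nu \geq c(M)\,r$ on $\partial\Omega$, which is the same mechanism as in the derivation of (\ref{BoPo}) that you mention as an alternative. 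With that substitution, the proof is complete.
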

\begin{proof}  
It suffices to prove this estimate for a  function $u$ in $ C^ \infty
( \bar \Omega)$ 
which vanishes in a neighborhood of $D$. Using the map $ \Phi$ defined
in (\ref{BiL}), we may reduce to considering the domain $ \Omega =    
\ball 0 1 $. We set $ A = \{ y \in \partial \ball 0 1 : \Phi (y)
\in D \} $, then we have 
$$
C(M) \sigma (D) \leq \sigma (A) r^ { n-1} \leq \sigma (D) .
$$
We will show that if $ u \in C^ \infty ( \bar  B _1 (0)  )$  and $u$ 
vanishes on $A \subset \partial \ball  0 1 $, then for $ 1\leq p < n $ 
\begin{equation}
\label{Claim} 
\left ( \int  _ { \ball 0 1 } |u |^ { np/ ( n - p ) } \, dy \right) ^ { 1/p -
  1/n } \leq \frac C { \sigma (A) } \left ( \int _ { \ball 0 1 }
|\nabla u  | ^ p \, dy \right ) ^ { 1/p } . 
\end{equation}
As the Lemma follows from  the special case where $ \Omega$ is  the
unit ball, $ \ball 0 1 $,
we only need to prove the claim (\ref{Claim}). 

We let $ \tilde A
= \{ y \in \ball 0 1 : y / |y| \in A \}$ and  observe that 
\begin{equation}
\label{Step1} 
\left | \int _ { \tilde A } u(y) \, dy \right | 
\leq \frac 1 n \int _ { \tilde A } |y \cdot \nabla u(y)|  \, dy.
\end{equation}
To establish (\ref{Step1}), we let $s $ be in $[0,1]$, $y \in A$ and
 use the fundamental theorem of calculus to write
$$
u(sy) = -\int _ s ^1 \frac d { dt} u(ty) \, dt  = - \int _ s ^ 1 y \cdot
\nabla u( ty) \, dt .
$$
We multiply by $s^ { n -1}$ and  integrate on $ [0,1] \times A$ to obtain
$$
\int _ { \tilde A } u( y) \, dy =  \int _A \int _ 0 ^ 1 u(sy) s^ {
  n-1}\,d s \, d\sigma  = 
-\frac 1 n \int _ { \tilde A } z \cdot \nabla u (z) \, dz
$$
which implies (\ref{Step1}). Given (\ref{Step1}), the claim
(\ref{Claim}) follows from (\ref{SoPo}).
\end{proof}

We are now ready to 
 establish (\ref{SoPo2}). Observe that if $ \dist( \locdom x \rho , D) >
0$, then $ \avg u x \rho = \average _ { \locdom x \rho } u \, dy $ and
thus (\ref{SoPo2}) follows from (\ref{SoPo}) and  in this case,  the
constant remains bounded as $ \rho $ approaches $r$.  If $ \avg u x
\rho =0$, then we have $ \dist ( \locdom x \rho  , D ) = 0$. In this
case, the corkscrew condition (\ref{Corkscrew}) implies that $ \sigma
(D \cap \bar \Omega _r (x) ) \geq c ( r - \rho ) ^ { n -1} $ and
(\ref{SoPo2}) follows from Lemma \ref{Wolff}.

To establish the inequality (\ref{BoPo}) we work in a coordinate
cylinder and 
let $ \eta (x', x_n ) $ be a cut off function which is 1 for $
|x_n - \phi ( x')| < \rho
/2$ and 0 for $  |x_n - \phi ( x')| > \rho$. We apply the 
 fundamental theorem of calculus and obtain that 
\begin{equation}
\label{BoPoGreen}
-  \int _ { \sball x \rho } |u- \avg u x \rho |^ q e_n \cdot \nu \, d\sigma = 
 \int _ { \locdom x \rho } q  \frac { ( u -
   \avg u  x \rho   ) ^ \alpha } { | u - \avg u x \rho |}
  \frac { \partial u ^ \alpha  } { \partial y _n }
 |u- \avg u x \rho|^ { q-1} \eta + |u - \avg u x\rho | ^ q
 \frac{\partial \eta }{\partial y_n} \, dy.  
\end{equation}
We apply the H\"older inequality and obtain that the following is an
upper bound to the right-hand side of (\ref{BoPoGreen}),  
$$
 q\left ( \int _ {\locdom x \rho } \left | \frac { \partial u }{\partial y _n } \right| ^ { p } 
 	\, dy\right)^ { 1/p } 
\left ( \int _ { \locdom  x \rho } |u - \avg u x \rho | ^ { (q-1)p'}\, dy \right) ^ { 1/p'} 
 + C \left( \int _ { \locdom x \rho } |u -\avg u x\rho |^ { q n / ( n -1 )} \, 
 dy \right ) ^ { (n-1)/n } . 
$$
 
If we have the relation $ (q-1) p' = np/(n-p)$, or $q = p(n-1) / ( n-p)$, 
we may use (\ref{SoPo2}) to obtain
$$
\left ( \int _ { \locdom x \rho } |u - \avg u x \rho |^ { (q-1)p'} \, dy \right)  ^ 
{ 1/( p'(q-1))}
\leq C \frac { r ^ { n-1}}{ ( r- \rho ) ^ { n -1 }}
\left( \int _ { \locdom x r } |\nabla u |^ { p} \, dy \right) ^ {
  1/p}. 
$$
If we have the relation $qn/ ( n-1) = np/(n-p) $, then the estimate
(\ref{SoPo2}) will give us 
$$
\left( \int _ {\locdom x \rho } |u- \avg u x \rho|^ {qn/ ( n-1)} \, dy \right ) ^ { 1/p} 
\leq C \frac { r ^ { n-1}}{ ( r- \rho ) ^ { n-1}}
 \left( \int _ { \locdom x  r  } |\nabla u |^ p \, \right) ^ {(n-1)/nq }.
$$
Simplifying gives that  $ q$ and $p$ are related by $ 1/p= 1/2 + 1/(2q) $ 
 if $n =2$
or $1/p = 1/n + (n-1)/(nq)$ in general, 
which gives (\ref{BoPo}).

\section{The regularity problem in $H^ { 1, p }( \partial \Omega)$.}
\label{RegularityProblem}

The goal of this appendix  is to treat the $L^p$-regularity problem for the
Lam\'e operator $L$ when the data lies in the Hardy-Sobolev space $H^{
  1,p}( \partial \Omega)$ (see section \ref{Functions} for the
definition of this space).  By the $L^p$-regularity problem, we mean
the following boundary value problem
\begin{equation}
\label{RegPro}
\left\{ \begin{array}{ll} 
Lu =0, \qquad & \mbox{in } \Omega  \\
u=f , \qquad & \mbox{on } \partial \Omega \\
\nontan {( \nabla u ) } \in L^ p ( \partial \Omega). 
  \end{array}
\right.
\end{equation}
The boundary values  are taken in the sense of non-tangential
limits. The argument we give is adapted from an argument  of    Pipher and
Verchota \cite{PV:1992}  used to study boundary value problems for the
bi-harmonic operator in three
dimensions and was subsequently used to study  the traction problem and
regularity problem for the Lam\'e system by
Dahlberg and Kenig \cite{DK:1990} for $p>1$.   Our main result is the following
theorem which treats the $L^p$-regularity problem
 in two and three dimensions  and  $p  \leq 1$.  
\begin{theorem}
\label{RPEx}
Let $\Omega \subset \reals ^ n$, with $n=2$ or $3$, be a Lipschitz
domain and suppose that $L$ is the Lam\'e operator with coefficients
satisfying (\ref{Elliptic}). 

There exists $ p_1<1$ so that regularity problem (\ref{RegPro}) has a
solution for $ p_1 < p \leq  1 $. More precisely we have:

If $f$ is in $H^ { 1,p} ( \partial \Omega)$, $p_1< p \leq 1$, then
there exists a solution of the $ L^p$-regularity problem which
satisfies
$$
\| \frac { \partial u } { \partial \rho } \| _ { H ^ p ( \partial
  \Omega )} + \| \nontan { ( \nabla u )} \|_ {L^ p ( \partial \Omega)}  
\leq C \| f \| _ { H ^ { 1, p }( \partial \Omega)}.
$$

The constant in this estimate depends on $M$ and the global character
of $ \Omega$. 

Furthermore, if $ p>p_1$ and  $u$ is a solution of the
$L^p$-regularity problem,  (\ref{RegPro}), 
 with $f =0$, then $u =0$. 
\end{theorem}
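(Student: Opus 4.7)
The plan is to adapt the Pipher--Verchota \cite{PV:1992} / Dahlberg--Kenig \cite{DK:1990} atomic decomposition strategy, now for $p \leq 1$, and to use the $L^q$-Dirichlet theory of Dahlberg--Kenig--Verchota \cite{DKV:1988} for uniqueness. The proof splits cleanly into existence and uniqueness.

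For existence, given $f\in H^{1,p}(\partial\Omega)$, write $f = \sum_j \lambda_j A_j$ with each $A_j$ an $H^{1,p}$-atom supported in a boundary ball $\Delta_j=\sball{x_j}{r_j}$ and $\sum_j \lambda_j^{p}\sigma(\Delta_j)^{1-p} \leq 2\|f\|_{H^{1,p}(\partial\Omega)}^p$. Since the atomic spaces are independent of the $L^t$-index used to normalize, we may view each $A_j$ as an $L^2$-atom and invoke the $L^2$-regularity solvability of \cite{DKV:1988} to produce a solution $v_j$ of $Lv_j=0$ in $\Omega$ with $v_j = A_j$ on $\partial\Omega$, satisfying $\|\nontan{(\nabla v_j)}\|_{L^2(\partial\Omega)} \leq C\sigma(\Delta_j)^{-1/2}$. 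With the shells $\Sigma_k^j = \ssb{x_j}{2^k r_j}\setminus\ssb{x_j}{2^{k-1}r_j}$, the main step is to prove an atomic estimate of the form
$$
\|\nontan{(\nabla v_j)}\|_{L^p(\Sigma_k^j)} \leq C\,2^{-k\gamma}(2^k r_j)^{1/p-1}, \qquad k \geq 0,
$$
where $\gamma > 0$ is a H\"older exponent available in dimensions $n = 2, 3$. For small $k$ this follows from H\"older's inequality and the $L^2$ normalization; for large $k$, using that $v_j = 0$ on $\partial\Omega\setminus\Delta_j$, one combines a Caccioppoli-type bound with H\"older continuity of the Green function for the $L^2$-Dirichlet problem (available via Dahlberg's results in $n=2$ and \cite{DK:1990} in $n=3$) to get the factor $(r_j/(2^k r_j))^\gamma$.

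Assemble $u = \sum_j\lambda_j v_j$. Convergence on compacta of $\Omega$ follows from the interior estimate (\ref{FI1}) combined with Lemma \ref{FI} applied to partial sums, and the atomic estimate above gives
$$
\|\nontan{(\nabla u)}\|_{L^p(\partial\Omega)} \leq C\|f\|_{H^{1,p}(\partial\Omega)},
$$
as long as $p > 1/(1+\gamma)$. For the traction term, one checks that each $\partial v_j/\partial \rho$ satisfies $\int_{\partial\Omega}\partial v_j/\partial\rho \, d\sigma = 0$ (from the divergence theorem and $Lv_j = 0$) together with the size estimate just obtained, so $\partial v_j/\partial\rho$ is a molecule in the sense of \cite{CW:1976}; the molecular theory then yields $\partial u/\partial\rho\in H^p(\partial\Omega)$ with the quantitative bound, using exactly the argument carried out in the proof of Theorem \ref{pEx}. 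The boundary values $u = f$ follow from interior regularity and non-tangential convergence in the approximating domains constructed by Verchota.

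For uniqueness, suppose $u$ solves (\ref{RegPro}) with $f=0$, so $u$ has non-tangential boundary value $0$ almost everywhere and $\nontan{(\nabla u)}\in L^p(\partial\Omega)$. Lemma \ref{FI} then gives $\nontan{u}\in L^q(\partial\Omega)$ with $1/q = 1/p - 1$. The $L^q$-Dirichlet problem for the Lam\'e system in a Lipschitz domain is well-posed for $q \geq 2$ by \cite{DKV:1988}, so we conclude $u \equiv 0$ provided $1/p - 1 \leq 1/2$, i.e.\ $p \geq 2/3$. Taking $p_1 = \max\{2/3,\ 1/(1+\gamma)\}$ gives both halves of the theorem.

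The principal obstacle is the atomic decay estimate: one must translate the $L^2$ regularity result together with H\"older continuity of the Dirichlet Green function into a quantitative estimate for $\nontan{(\nabla v_j)}$ in dyadic shells. This is precisely where the restriction to $n = 2, 3$ enters, since the needed pointwise H\"older continuity of the Dirichlet Green function is a consequence of the $L^p$-Dirichlet theory of \cite{DKV:1988, DK:1990} that is only available in these dimensions. Beyond this step, the construction of $u$, the molecular argument for $\partial u/\partial\rho$, and the duality-based uniqueness are essentially parallel to the analogous steps in Sections \ref{JustOne} and \ref{Exist}.
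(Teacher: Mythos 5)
Your overall scaffolding (atomic decomposition, reduction to a single $1$-atom, decay estimate in dyadic shells, molecular argument for the traction, duality/Lemma \ref{FI} for uniqueness) matches the paper's. The two genuine differences are in the key decay lemma and in the dimension-dependent bookkeeping, and both deserve attention.

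For the atomic decay estimate (the paper's Lemma \ref{RPDecay}), you propose to combine a Caccioppoli bound with ``H\"older continuity of the Green function for the $L^2$-Dirichlet problem.'' That is not how the paper proceeds, and the route you suggest is problematic for the Lam\'e operator. Because $L$ is a \emph{system} without a maximum principle, a pointwise H\"older bound on the Poisson extension away from the support of the boundary data is not something one can simply read off from Dahlberg--Kenig--Verchota; it is precisely the sort of pointwise Green-function control that the authors were forced to develop separately in two dimensions for the mixed problem (via \cite{TKB:2012}) and which remains unavailable in higher dimensions. The paper instead goes through the Dindo\v s--Mitrea inequality (\ref{DiMi}) combined with the $L^t$-Dirichlet solvability for some $t\in(2-\epsilon,2)$: starting from Lemma \ref{LocalSolve} and the Caccioppoli bound (\ref{Cacc}), it controls $\int_{\locdom y {4R}}|u|^2$ by an $L^t$ norm of $\nontan u$ localized near $\sball y R$, then uses the atom normalization $\|A\|_{L^t(\sball x r)}\lesssim r^{2-n+(n-1)/t}$ to land on (\ref{RP4}), reading off $\gamma>0$ when $n\leq 3$. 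This entirely avoids pointwise Green-function estimates, and that is the point of the appendix. Your proof sketch of the central inequality is therefore missing the actual mechanism, and substituting ``H\"older continuity of the Dirichlet Green function'' is not an established fact you can cite here for Lam\'e.

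Relatedly, your explanation of where the restriction $n\leq 3$ enters is incorrect: the $L^p$-Dirichlet theory of \cite{DKV:1988} for $p$ near $2$ holds in \emph{all} dimensions, not only $n=2,3$. The constraint $n\leq 3$ arises purely from the arithmetic of the exponent $\gamma$ produced by the Dindo\v s--Mitrea/$L^t$ argument — $\gamma$ ceases to be positive for $n\geq 4$ when $t$ is near $2$.

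Finally, in the uniqueness step you apply Lemma \ref{FI} with $1/q = 1/p - 1$, which is the two-dimensional statement. For $n=3$ you must invoke the $n\geq 3$ analogue (Mitrea--Wright \cite[Lemma 11.9]{MW:2011}), giving $1/q = 1/p - 1/(n-1) = 1/p - 1/2$, and you should allow $q>2-\epsilon$ rather than $q\geq 2$. As written, your uniqueness argument does not cover $n=3$ and gives a slightly weaker range of $p$ than the paper's.
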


We recall several facts that will be useful in our proof. 
A  real-variable estimate of Dindo\v  s and Mitrea  \cite[Lemma
  6.1]{MR1934200}   tells us that
there is a constant $C$ so that 
\begin{equation}
\label{DiMi}
\left( \int _ \Omega |u | ^ p \, d y  \right) ^ { 1/p } 
\leq C \left ( \int _ { \partial \Omega  } (\nontan u )^ { p(n-1)/n} \,
d\sigma \right ) ^ { n / p ( n-1)}, \qquad  p > 0.
\end{equation}
In work from 1988, Dahlberg, Kenig, and Verchota \cite{DKV:1988}
treated the Dirichlet problem for the Lam\'e system (also see work of
Mayboroda and Mitrea if $n =2$ \cite{MR2181934}). From this  work 
 and a real-variable argument we can show
that  there
exists $\epsilon >0$ so that the $L^p$-Dirichlet problem
\begin{equation} 
\label{LpDp}
\left\{ 
\begin{array}{ll} 
Lu =0 ,  \qquad & \mbox{in } \Omega \\
u =f , \qquad & \mbox{on } \partial \Omega \\
\nontan u \in L^ p ( \partial \Omega )
\end{array}
\right.
\end{equation}
has a unique solution for $p$ in the range $ (2- \epsilon , 2 +
\epsilon)$. This solution  satisfies the estimate $ \| \nontan u \| _ { L^ p (
  \partial \Omega )} \leq C \| f \| _ { L^ p ( \partial \Omega )}$. 

\comment{
We introduce some notation that will be needed in the  next Lemma.
Given a  boundary ball $ \sball x r $, we 
let $ \Sigma _ 0 = \{ y : |y -x| < r \}$ and $ \Sigma _k =
\{ y : 2 ^ { k-1} r \leq |x-y  | < 2 ^ k r \}$. }  The following
technical result is the main step in our argument.  The proof follows
ideas of Pipher and Verchota and especially Dahlberg and Kenig
\cite[Lemma 1.6]{DK:1990}.  This Lemma will use the sets $\Sigma _k$
as defined in section 4. 

\begin{lemma}
\label{RPDecay} 
Suppose $ \Omega$ is a Lipschitz domain in $ \reals ^ n$, $n =2, 3$, 
and $L$ is the Lam\'e operator with coefficients satisfying the
conditions (\ref{Elliptic}). If $ A$ is a 1-atom supported in a
boundary ball $ \sball x r$, then there
exists $ \gamma >0$ so that
\begin{equation}
\label{RP1} 
\left ( \int  _ { \Sigma _k } |\nabla u |^ 2 \, d\sigma \right ) ^
{ 1/2 } \leq C 2 ^ { - \gamma k } ( 2^ k r ) ^ {-( n-1)/2 }    , 
  \qquad k =0,1,2,\dots. 
\end{equation}
and for $  1\geq   p > 1/(1+\gamma) $, 
$$
\| \nontan {( \nabla u )} \| _ { L^ p ( \partial \Omega)} \leq C
\sigma ( \sball x r  ) ^ { \frac 1 p -1}.
$$
\end{lemma}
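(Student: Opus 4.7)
The plan is to construct $u$ via the $L^2$-regularity theory of Dahlberg-Kenig-Verchota and then establish a two-scale estimate: a uniform $L^2$-bound handling the annuli near the support of $A$, and a pointwise decay estimate plus Caccioppoli handling the annuli far away. Since $A$ is a $1$-atom supported in $\sball x r$, its tangential gradient satisfies $\|\nabla_t A\|_{L^2(\partial\Omega)} \leq \sigma(\sball x r)^{-1/2}$. The $L^2$-regularity theorem of \cite{DKV:1988} then produces a unique solution $u$ to the Dirichlet problem with data $A$ satisfying the global bound $\|\nontan{(\nabla u)}\|_{L^2(\partial\Omega)} \leq C\, r^{-(n-1)/2}$. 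This immediately gives (\ref{RP1}) for $k$ bounded by any fixed $k_0$, at the cost of absorbing a factor $2^{k_0(\gamma+(n-1)/2)}$ into the constant.

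For $k \geq k_0$ with $k_0$ chosen large enough, any $y \in \Sigma_k$ has $A \equiv 0$ on $\sball y {2^{k-2} r}$, so $u$ vanishes on this boundary ball. I would next establish a pointwise estimate
\[
|u(z)| \leq C \left(\frac{r}{2^k r}\right)^\gamma \|A\|_{L^\infty(\partial\Omega)}, \qquad z \in \locdom y {2^{k-3} r},
\]
for an exponent $\gamma > 0$ depending only on $M$. This combines a global $L^\infty$-bound $\sup_\Omega |u| \leq C\|A\|_\infty$ (which for the Lamé system in dimensions $n \leq 3$ follows from the $L^2$-Dirichlet theory together with Moser-type iteration, see \cite{DKV:1988,MR2181934}) with the boundary Hölder continuity of solutions with zero Dirichlet data on a boundary ball; $\gamma$ is the boundary Hölder exponent of $L$. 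Using the atom normalization, $\|A\|_\infty \leq C\, r^{2-n}$, so $|u(z)|$ carries the decay factor $2^{-k\gamma}$ times a power of $r$ and $2^k r$.

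The pointwise bound then feeds into Caccioppoli's inequality (\ref{Cacc}) in the region $\locdom y {2^{k-3} r}$, producing an interior $L^2$-bound on $\nabla u$ with the appropriate decay. Applying the local boundary estimate of Lemma \ref{LocalSolve} (in its regularity-problem form, since $\nabla_t u \equiv 0$ on $\sball y{2^{k-3}r}$) transfers this to an $L^2$-bound on $\nontan{(\nabla u)_{c 2^k r}}$ over $\sball y {2^{k-4}r}$. Covering $\Sigma_k$ by finitely many such boundary balls of radius comparable to $2^k r$ and summing yields (\ref{RP1}) with the decay $2^{-k\gamma}(2^k r)^{-(n-1)/2}$.

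For the $L^p$-estimate on the non-tangential maximal function, I would use Hölder's inequality on each annulus,
\[
\int_{\Sigma_k} \nontan{(\nabla u)}^p \, d\sigma \leq \sigma(\Sigma_k)^{1-p/2} \left(\int_{\Sigma_k} \nontan{(\nabla u)}^2 \, d\sigma\right)^{p/2},
\]
combined with the conversion from $\int |\nabla u|^2$ to $\int \nontan{(\nabla u)}^2$ via Lemma \ref{LocalSolve} (again using $\nabla_t u = 0$ outside $\sball x r$). Summing the resulting geometric series $\sum_k 2^{-kp\gamma}(2^k r)^{(n-1)(1-p)}$ converges provided $p(1+\gamma) > 1$, which determines $p_1 = 1/(1+\gamma)$. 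The main obstacle is producing the pointwise Hölder decay with the right scaling, since the Lamé system admits no classical maximum principle; the restriction to $n \in \{2,3\}$ enters precisely here, through the availability of the $L^2$-Dirichlet theory and the associated uniform $L^\infty$ and boundary Hölder estimates for $L$.
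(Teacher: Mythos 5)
The core difficulty you have to confront is that the Lam\'e system is a genuine \emph{system}, so there is no maximum principle and the de Giorgi--Nash--Moser machinery does not apply; the paper flags this explicitly in the introduction. Your far-field argument rests on two pointwise ingredients that are not available: a global bound $\sup_\Omega|u|\leq C\|A\|_\infty$ (which you attribute to ``Moser-type iteration'') and boundary H\"older continuity of $u$ near a boundary ball where it vanishes. Neither of these holds for the Lam\'e system in a general Lipschitz domain by the methods you cite. The $L^\infty$-Dirichlet problem is not part of the known theory (what \cite{DKV:1988} and \cite{MR2181934} give is the $L^p$-Dirichlet problem for $p$ in a window $(2-\epsilon,2+\epsilon)$, not $p=\infty$), and Moser iteration applied to a system with no sign structure does not produce an $L^\infty$ bound. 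Likewise, boundary H\"older regularity for a system with vanishing Dirichlet data on a Lipschitz boundary ball is exactly the kind of de Giorgi-type estimate the paper says one cannot expect; in the main body of the paper the analogous pointwise decay (\ref{GDecay}) is imported from the specifically two-dimensional Green-function work of Taylor--Kim--Brown, not derived from general principles, and it is not used in the appendix.

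The paper's proof avoids all pointwise estimates. After the Caccioppoli step (\ref{RP2}), the quantity $R^{-3}\int_{\locdom y{4R}}|u|^2\,dz$ is bounded not by $\sup|u|$ but by an integral of $\nontan u$ over a boundary set of measure $\sim R^{n-1}$, via the Dindo\v s--Mitrea estimate (\ref{DiMi}). H\"older's inequality then introduces a power of $R$ and reduces to $\|\nontan u\|_{L^t(\partial\Omega)}$ for a fixed $t\in(2-\epsilon,2)$, which the $L^t$-Dirichlet theory controls by $\|A\|_{L^t}$, and finally the atom normalization yields the power of $r$. The decay exponent $\gamma$ then comes out of elementary scaling, and the condition $\gamma>0$ is what forces $n\leq 3$. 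This is also why your diagnosis of the dimensional restriction is off: it is not that $L^\infty$ or H\"older estimates become available in low dimensions, but that the exponents produced by the $L^t$-Dirichlet/Dindo\v s--Mitrea scaling argument only line up to give positive decay when $n\leq 3$. A secondary remark: the passage from the boundary $L^2$ decay (\ref{RP1}) to the $L^p$ estimate on $\nontan{(\nabla u)}$ is done in the paper via the representation formula and the molecule argument, as in the proof of Theorem \ref{Hardy}, rather than by H\"older on annuli; your sketch of that step would still require converting annulus bounds on $\nabla u|_{\partial\Omega}$ into bounds on the full non-tangential maximal function, which Lemma \ref{LocalSolve} alone does not give globally.
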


\begin{proof} We let $ A$ be a 1-atom supported in a boundary ball $
  \sball x r$. From the work of Dahlberg, Kenig, and Verchota
  \cite[Theorem 3.7]{DKV:1988} (see also Mayboroda and Mitrea  \cite{MR2181934} for  results
  in two dimensions), we have a solution of the regularity problem
  (\ref{RegPro}) with $\nontan{(\nabla u )} \in L^2 ( \partial \Omega
  )$ that  satisfies the estimate 
$$
\| \nontan { ( \nabla u ) } \|_ {L^ 2 (\partial \Omega )} \leq C \|
\nabla _t  A \| _{ L^ 2 ( \sball x r )} \leq C \sigma ( \sball xr )^
       { -1/2}.
$$
This implies the estimate (\ref{RP1}) with a constant that depends on
$k$. Thus, it suffices  to prove (\ref{RP1}) for $k \geq k_0$. 
As in the proof of Lemma \ref{Decay}  we choose $ k _0$ so that $ \dist (\sball
x r , \Sigma_ { k _0 } ) \approx r$. 

To establish (\ref{RP1}) for $k \geq k_0$,  fix $ R> r$ and suppose that $ \sball y { 8 R}$ is a
boundary ball with $ \sball y { 8 R} \cap \sball x r = \emptyset $. We
use the results Lemma \ref{LocalSolve} and (\ref{Cacc}) to obtain 
\begin{equation}
\label{RP2}
\int _ { \sball y R } |\nabla u | ^ 2 \, d\sigma \leq \frac  C R \int  _ {
  \locdom y {2R} } |\nabla u |^ 2\, dz
\leq \frac  C { R^ 3} \int _{ \locdom y { 4 R}} |u | ^ 2\,dz  .
\end{equation}
We let $ E(y,R) = \{ z\in \partial \Omega : \Gamma (z) \cap \locdom y
{ 4R } \neq \emptyset \}$ and observe that $E(y,R) \subset \{ z : |y -
z | \leq CR \}$ where $C = C(M, \alpha )$. We fix an exponent $ t $ in
$ ( 2-\epsilon , 2 )$ such that we can solve the $ L^t$-Dirichlet
problem. 
The estimate of Dindo\v s and Mitrea (\ref{DiMi}), H\"older's inequality and the
solvability of the $L^t$-Dirichlet problem imply  that we have
\begin{equation}
\label{RP3} 
\begin{split}
\frac 1 { R ^ 3} \int _ { \locdom y {4R}} |u | ^2 \, dz 
& \leq \frac C {
  R ^ 3} \left( \int _ {E(y,R)} ( \nontan u ) ^ { 2 ( n-1)/n} \,d
\sigma \right) ^ { ( n-1) /n} \\
& \leq C R ^ { n-3 - ( n-1) 2/t} \| \nontan u \|^2_{ L^ { t  } (
  \partial \Omega )} 
\leq C  \| A \|^2 _ { L^ t ( \sball x r )} R ^ { n-3 - (n-1) 2 /t} \\
&\leq C r ^ { ( n-1) 2 /t + 4 -2n} R ^ { n-3 - ( n-1) 2 /t } .
\end{split}
\end{equation}
We have used the properties of the atom $A$ to obtain that $ |A|\leq C
/r^ { n-2} $ in the last line. 
If we combine (\ref{RP2}) and (\ref{RP3}) 
we obtain 
\begin{equation}
\label{RP4}
\int _ { \sball  y R } |\nabla u |^2 \, d\sigma \leq  C \left ( \! \frac r
R \! \right) ^ { 3-n - ( n -1 ) ( 2 /t -1)} \sigma ( \sball y R ) ^ {
  -1} .
\end{equation}
We let $\gamma  =
\frac 1 2 ( ( n -3)   + ( n -1) ( \frac 2 t -1))$. Since $ t < 2$,
$ \gamma $ will be positive if $ n  \leq 3$.  The estimate
(\ref{RP1}) follows from (\ref{RP4})  by an elementary covering
argument. 

The second estimate follows using a  representation formula for the
gradient of a solution $u$ as in the
proof of Theorem \ref{Hardy}. Note that it is much easier to justify
the integration by parts  in this case 
as we have $\nontan {( \nabla   u )} \in L^ 2 ( \partial \Omega )$. 
\end{proof}
\note{Several exponents corrected in (\ref{RP3}) and (\ref{RP4}).}

Now we are ready to give the proof of the main result of this
appendix. 
\begin{proof}[Proof of Theorem \ref{RPEx}]
Let $ f= \sum _ j \lambda _j A _j$ with each $ A _j$ a 1-atom
supported in $ \Delta _j = \sball { x_j } {r _j }$ and $ \sum _ { j }
\lambda _ j ^ p \sigma ( \Delta _ j ) ^ { 1-p} < \infty$. 
We let $v_j$
be the solution of (\ref{RegPro})  with $ f = A_j$ as given in the
work of Dahlberg, Kenig, and Verchota \cite{DKV:1988} and consider the
sum $ u = \sum _ j \lambda _j v _j$. Using the estimate (\ref{DiMi})
and the Poincar\'e inequality, we have that as long as $p> ( n-1) /n$, 
$$
\| v_j \| _ { L^ { pn/(n-1)}  ( \Omega)} \leq C \| \nabla v _ j \| _ { L^ {
      pn/(n-1)}( \Omega)} \leq C \| \nontan { ( \nabla v _j )} \| _ {
    L^ p ( \partial \Omega) }  \leq C \sigma ( \Delta _j ) ^ { \frac 1
    p -1}. 
$$
The elementary inequality  $ (\sum a _ j ) ^p \leq \sum a_j ^p$,  valid if 
$a_j \geq 0$ and $ p<1$, implies that for $M < M'$, we have 
$$
\| \sum _ { j = M}^ { M ' } \lambda _ j v _ j \| _ { L^ { pn/(n-1) }(
  \Omega)}
\leq \left ( \sum _ { j =  M } ^ {M'} \lambda _j ^ p \sigma ( \Delta
_j ) ^ { 1- p } \right ) ^ { 1/ p } 
$$
which implies that the series defining $u$ converges at least in $ L^
{ pn / ( n -1 ) }( \Omega)$. The estimate for $\| \nontan {( \nabla
  v_j )} \| _ { L^ p ( \partial \Omega)}$  in Lemma \ref{RPDecay}
implies that 
$$
\| \nontan {( \nabla u )} \| _ { L^ p ( \partial \Omega) } \leq C 
\left ( \sum _ j \lambda _ j ^ p \sigma ( \Delta _j )  ^ { 1-p} \right
) ^ { 1/p } , \qquad p > 1/( 1+ \gamma ) 
$$
and that   $\nabla u$ has non-tangential limits a.e. in $ \partial
\Omega$. 
We may follow the argument in Theorem \ref{pEx} to show that $\partial
u / \partial \rho $ lies in $H^ p ( \partial \Omega)$. 
This  completes the proof of existence of solutions to the
$L^p$-regularity problem for $p > \max \{ ( n-1)/n, 1/( 1+
\gamma) \} $.

We establish the uniqueness of solutions to the $L^p$-regularity
problem. Suppose $u$ is a solution of (\ref{RegPro}) with $u=0$
a.e. on $ \partial \Omega$. From Lemma \ref{FI} or the result in
Mitrea and Wright \cite[Lemma 11.9]{MW:2011} for $n =3$, we obtain that $\nontan u \in
L^ q ( \partial \Omega )$, $ 1 /q = 1/p - 1/ ( n-1)$. Thus, if $ q >
2-\epsilon$ with $2 -\epsilon$ the lower bound for uniqueness in the
Dirichlet problem from the work of Dahlberg, Kenig, and Verchota
\cite{DKV:1988}, we may conclude that $u=0$. In two dimensions, this
gives uniqueness if $ p > 1 - 1/ ( 3 - \epsilon)$ and in three
dimensions we obtain uniqueness if $ p > 1 - \epsilon / ( 4 -
\epsilon)$.
\end{proof}

\bibliographystyle{plain}
\def\cprime{$'$} \def\cprime{$'$} \def\cprime{$'$} \def\cprime{$'$}

\medskip 

\noindent{\small \today}
\end{document}